\newtheorem{theorem}{Theorem}[subsection]
\newtheorem{lemma}[theorem]{Lemma}
\newtheorem{corollary}[theorem]{Corollary}
\newtheorem{proposition}[theorem]{Proposition}
\theoremstyle{remark}
\newtheorem*{remark}{Remark}
\newtheorem*{conjecture}{Conjecture}
\theoremstyle{definition}
\newtheorem{definition}[theorem]{Definition}
\newtheorem{example}[theorem]{Example}
\newtheorem{appendix theorem}{Theorem}[section]
\newtheorem{appendix lemma}[appendix theorem]{Lemma}
\newtheorem{appendix corollary}[appendix theorem]{Corollary}
\newtheorem{appendix proposition}[appendix theorem]{Proposition}
\newcommand{\N}{\mathbb{N}}
\newcommand{\Z}{\mathbb{Z}}
\newcommand{\Q}{\mathbb{Q}}
\newcommand{\R}{\mathbb{R}}
\newcommand{\C}{\mathbb{C}}
\renewcommand{\S}{\mathbb{S}}
\newcommand{\T}{\mathbb{T}}
\newcommand{\V}{V}
\newcommand{\A}{\mathcal{A}}
\newcommand{\sA}{\mathrm{s}\mathcal{A}}
\newcommand{\U}{\textup{U}}
\newcommand{\tend}[2]{\underset{#1 \rightarrow #2}{\longrightarrow}}
\newcommand{\landau}[2]{\underset{#1 \rightarrow #2}{=}}
\newcommand{\eq}[2]{\underset{#1 \to #2}{\sim}}
\newcommand{\ind}{\mathds{1}}
\newcommand{\Mat}{\mathcal{M}\mathrm{at}}
\newcommand{\Span}{\mathrm{Span}}
\newcommand{\Img}{\mathrm{Im}}
\newcommand{\GL}{\mathrm{GL}}
\newcommand{\tr}{\mathrm{tr}}
\renewcommand{\det}{\mathrm{det}}
\newcommand{\diag}{\mathrm{diag}}
\newcommand{\Vol}{\mathrm{Vol}}
\newcommand{\diff}{\mathrm{d}}
\renewcommand{\i}{\mathrm{\textbf{i}}}
\newcommand{\e}{\mathrm{\textbf{e}}}
\newcommand{\Log}{\mathrm{Log}}
\newcommand{\sLog}{\mathrm{sLog}}
\newcommand{\trop}{\mathrm{trop}}
\newcommand{\strop}{\mathrm{strop}}
\newcommand{\Conv}{\mathrm{Conv}}
\newcommand{\Newt}{\mathrm{Newt}}
\newcommand{\sNewt}{\mathrm{sNewt}}
\newcommand{\val}{\mathrm{val}}
\newcommand{\sval}{\mathrm{sval}}
\newcommand{\RR}{\mathcal{R}}
\newcommand{\abs}[1]{\left| #1 \right|}
\newcommand{\norme}[1]{\left\| #1 \right\|}
\renewcommand{\d}{\displaystyle}
\renewcommand{\leq}{\leqslant}
\renewcommand{\geq}{\geqslant}
\newcommand{\inter}[1]{\overset{\circ}{#1}}
\newcommand{\fonction}[4]{\left\{ \begin{array}{rcl}
		\d #1 & \rightarrow & \d #2\\
		\d #3 & \mapsto & \d #4
	\end{array} \right.}
\title{Generalized amoebas for subvarieties of $\GL_n(\C)$}
\author{Rémi Delloque\footnote{École Normale Supérieure de Lyon, Lyon, France, e-mail: remi.delloque@ens-lyon.fr},
Kiumars Kaveh\footnote{University of Pittsburg, Pittsburgh, PA, U.S.A., e-mail: kaveh@pitt.edu}}
\begin{document}

\maketitle

\begin{abstract}
This paper is a report based on the results obtained during a three months internship at the University of Pittsburgh by the first author and under the mentorship of the second author. In \cite{Kaveh} and \cite[Section 7]{Kaveh-Manon}, the notion of an amoeba of a subvariety in a torus $(\C^*)^n$ has been extended to subvarieties of the general linear group $\GL_n(\C)$. 
In this paper, we show some basic properties of these matrix amoebas, e.g. any such amoeba is closed and the connected components of its complement are convex when the variety is a hypersurface. We also extend the notion of Ronkin function to this setting. For hypersurfaces, we show how to describe the asymptotic directions of the matrix amoebas using a notion of Newton polytope. Finally, we partially extend the classical statement that the amoebas converge to the tropical variety. We also discuss a few examples. Our matrix amoeba should be considered as the Archimedean version of the spherical tropicalization of Tevelev-Vogiannou for the variety $\GL_n(\C)$ regarded as a spherical homogeneous space for the left-right action of $\GL_n(\C) \times \GL_n(\C)$. \end{abstract}

\medskip
{\bf This is a preliminary version, comments are welcome.}

\tableofcontents

\section*{Introduction}
In \cite[Section 7]{Kaveh-Manon} and \cite{Kaveh}, the logarithm of singular values of a matrix has been suggested as an analogue of the logarithm map on the algebraic torus $(\C^*)^n$ for the general linear group $\GL_n(\C)$. In this paper we establish some basic results about the image of subvarieties in $\GL_n(\C)$ under this logarithm map, extending the classic results about amoebas in $(\C^*)^n$. 

We start with some background and motivations. From the point of view of algebraic geometry, tropical geometry is concerned with describing the ``(exponential) behavior at infinity'', of subvarieties in $(\C^*)^n$ where $\C^* = \C\backslash\{0\}$. With componentwise multiplication, $(\C^*)^n$ is an abelian group. It is usually referred to as an \emph{algebraic torus} and is one of the basic examples of algebraic groups. A subvariety of $(\C^*)^n$ is called 
a {\it very affine variety}. The behavior at infinity of a subvariety $Y \subset (\C^*)^n$ is encoded in a union of convex polyhedral cones called the {\it tropical variety} of $Y$. 
There are (at least) two natural ways to define the tropical variety of a very affine variety: (1) using the formal Laurent series and \emph{tropicalization map}, and (2) using the \emph{logarithm map}. 

\textit{Tropicalization map (on torus):} Let $\mathcal{K} = \C((t))$ be the field of formal Laurent series in one indeterminate $t$. 
Then the algebraic closure $\overline{\mathcal{K}} = \C\{\!\{t\}\!\} =  \bigcup_{k = 1}^\infty \C((t^{1/k}))$ is the field of formal Puiseux series. 
The field $\overline{\mathcal{K}}$ comes equipped with the {\it order of vanishing} valuation 
$\val: (\overline{\mathcal{K}})^* = \overline{\mathcal{K}}\backslash\{0\} \to \Q$ defined as follows: for a Puiseux series $f(t) = \sum_{i = m}^\infty a_i t^{i/k}$, where $a_m \neq 0$, we put $\val(f(t)) = m/k$.
The valuation $\val$ gives rise to the {\it tropicalization map} $\trop$ from $(\overline{\mathcal{K}}^*)^n$ to $\Q^n$:
\[
\trop(z_1(t), \ldots, z_n(t)) = (\val(z_1(t)), \ldots, \val(z_n(t))).
\]
Let $Y \subset (\C^*)^n$ be a subvariety with ideal $I=I(Y) \subset \C[x_1^\pm, \ldots, x_n^\pm]$. Let $Y(\overline{\mathcal{K}})$ denote the Puiseux series valued points on $Y$, that is, $Y(\overline{\mathcal{K}}) = \{z(t) = (z_1(t), \ldots, z_n(t)) \in (\overline{\mathcal{K}}^*)^n \mid \forall f \in I,~f(z_1(t), \ldots, z_n(t)) = 0\}$. The tropical variety of $Y$ is the closure (in $\R^n$) of the image of $Y(\overline{\mathcal{K}})$ under the map $\trop$. One shows that the tropical variety of a subvariety always has the structure of a \emph{fan} in $\R^n$, that is, it is a finite union of (strictly) convex rational polyhedral cones (see \cite[Chapter 3]{MacLagan}).
 
\textit{Logarithm map (on torus):}
The {\it logarithm map} $\Log: (\C^*)^n \to \R^n$ is defined by:
\begin{equation} \label{equ-log-map-torus}
\Log(z_1, \ldots, z_n) = (\ln|z_1|, \ldots, \ln|z_n|).
\end{equation}
Clearly the inverse image of every point is an $(S^1)^n$-orbit in $(\C^*)^n$. Here $S^1$ denotes the complex unit circle and $(S^1)^n = \{(z_1, \ldots, z_n) \mid |z_1| = \cdots = |z_n| = 1\}$ which is the maximal compact subgroup in $(\C^*)^n$. 

For a subvariety $Y \subset (\C^*)^n$, its \emph{(Archimedean) amoeba} $\mathcal{A}(Y)$ is the image of $Y$ in $\R^n$ under the logarithm map $\Log$. Amoebas were introduced by Gelfand, Kapranov and Zelevinsky in \cite[Section 6.1]{Gelfand}, as a means to study the asymptotic behavior at infinity of subvarieties in $(\C^*)^n$. An amoeba goes to infinity along certain directions usually called its {\it tentacles} (and hence the name amoeba). The directions along which an amoeba goes to infinity in fact coincides with the tropical variety of $Y$. More precisely, we have the following fact that goes back to Bergman \cite{Bergman} (in a different form and before the notion of tropical variety was introduced):
\begin{center}
\emph{As $\rho \to 0^+$, the rescaled amoeba $\rho \mathcal{A}(Y)$ approaches $-\trop(Y)$, the negative of the tropical variety.}
\end{center}
When $Y$ is a hypersurface this is relatively easy to show and basically appears in \cite[Section 6.1, Proposition 1.9]{Gelfand}. Even though the statement that, for arbitrary $Y$, the amoeba approaches the tropical variety has been known as a folklore, a precise formulation and proof only appeared relatively recently in (\cite[Theorem A]{Jonsson}). 

It is natural to ask whether tropical geometry and notions of tropicalization and logarithm map can be extended to other classes of varieties with group actions. To this end, it is natural to consider \emph{spherical homogeneous spaces} $G/H$ where $G$ is a reductive algebraic group over $\C$. We recall that a $G$-variety is called \emph{spherical} if a Borel subgroup (and hence all Borel subgroups) have an open (hence dense) orbit. The notion of tropicalization has been extended to spherical homogeneous spaces in the work of Tevelev and Vogiannou \cite{Tevelev}. A suggestion for the notion of logarithm map on spherical homogeneous spaces appears in \cite{Kaveh-Manon}. In the case where the homogeneous space is $\GL_n(\C)$ this logarithm map coincides with the logarithm of singular values of a matrix \cite{Kaveh}. Here we consider $\GL_n(\C)$ as a spherical homogeneous space for the left-right action of $G = \GL_n(\C) \times \GL_n(\C)$, thus identifying $\GL_n(\C)$ with $(\GL_n(\C) \times \GL_n(\C)) / \GL_n(\C)_\textup{diag}$ where $\GL_n(\C)_\textup{diag} = \{(g, g) \mid g \in \GL_n(\C)\}$.

\textit{Main results:}
For an $n \times n$ matrix $A$ we let $\sLog(A)$ to be the collection of logarithms of singular values of $A$ (see Definition \ref{def-sLog}). This defines the \emph{spherical logarithm} map $\sLog: \GL_n(\C) \to \R^n / \mathcal{S}_n$, where $\mathcal{S}_n$ is the group of permutations (symmetric group). We call the image of a subvariety $Y \subset \GL_n(\C)$ under the logarithm map $\sLog$, the \emph{matrix amoeba} or \emph{spherical amoeba} of $Y$ and denote it by $\sA(Y)$.

In this paper, for any subvariety $Y \subset \GL_n(\C)$, we show the following:
\begin{itemize}
\item The matrix amoeba $\sA(Y)$ is closed.
\item Each connected component of the complement of $\sA(Y)$ is convex when $Y$ is a hypersurface.
\item We give an analogue of the notion of Ronkin function and show that is is affine on each connected component of the complement of $\sA(Y)$.
\item For a regular function $f \in \C[\GL_n]$ we consider its \emph{spherical Newton polytope} (also called its \emph{weight polytope}). When $Y$ is a hypersurface given by an equation $f = 0$, we give a description of the asymptotic directions in $\sA(Y)$ (in other words, the spherical tropical variety of $Y$) in terms of the spherical Newton polytope of $f$.
\item We show that the limit of the sets $\rho\, \sA(Y)$, as $\rho \to 0^+$, contains the spherical tropical variety of $Y$ (in the sense of Tevelev-Vogiannou). Moreover, it coincides with the spherical tropical variety when $Y$ is a hypersurface.
\end{itemize}

\begin{remark}
We point out that a very general construction of a logarithm map and amoeba appears in \cite{Eliyashev}. It is an interesting question to investigate the connection between our notion of logarithm map and the one in \cite{Eliyashev} (in the case of general linear group).
\end{remark}

\begin{remark}
We expect that the constructions, statements and proofs in the present note, with little change, extend to arbitrary connected reductive algebraic groups over $\C$. 
\end{remark}


Section \ref{SEC:Definitions} contains the definitions of matrix amoebas and some basic properties that will justify the definitions. Section \ref{SEC:Amoebas} is a study of the geometric aspect of matrix amoebas of hypersurfaces. Section \ref{SEC:Representations} is a small digression about representations of the general linear group and Newton polytopes. Section \ref{SEC:Tropical} generalises the Bergman theorem \cite{Bergman} that links amoebas to tropical varieties. Finally, appendices \ref{SEC:Notations} and \ref{SEC:Lemmas} are dedicated respectively to notations and technical lemmas that are not linked to tropical geometry or amoebas.

\section{Definitions and elementary results}\label{SEC:Definitions}

\subsection{Definitions and results in the torus}
We recall that the algebraic torus $T = (\C^*)^n$ and the matrix group $\GL_n(\C)$ are affine varieties:
\[
T \cong \left\{(z_1,\ldots,z_n,w) \in \C^{n + 1}|z_1\cdots z_nw = 1\right\},
\]
\[\GL_n(\C) \cong \left\{((a_{ij})_{1 \leq i,j \leq n},z) \in \C^{n^2 + 1}|\det(a_{ij})z = 1\right\}.
\]
A subvariety of the algebraic torus is the set of all the common zeros of the functions of an ideal $I$ of $\C[T]$, the ring of regular functions on $T$. In fact, $\C[T] = \C[X^\pm] = \C[X_1^\pm,\ldots,X_n^\pm]$ is the ring of Laurent polynomials with $n$ indeterminates. The concept of amoeba of a very affine variety, that is, a subvariety of $T$, was introduced by Gelfand, Kapranov and Zelevinsky in \cite{Gelfand}. The amoeba of a very affine variety $Y = \V(I)$ is defined to be its image under the $\Log$ map,
\[
\Log : \fonction{T}{\R^n}{z}{(\ln\abs{z_1},\ldots,\ln\abs{z_n}).}
\]
We denote it by $\A(Y)$ or $\A(I)$, or $\A(f)$ when $I = (f)$ is principal. It is known that an amoeba is a closed subset of $\R^n$ and all the connected components of its complement are convex. The asymptotic directions along which an amoeba approaches infinity is a finite union of polyhedral cones which is the \emph{tropical variety of $Y$}. When $I = (f)$ is a principal ideal of $\C[X^\pm]$, one can describe the tropical variety using the Newton polytope of $f$: The \emph{support} $S_f$ of $f$ is the set of exponents $m=(m_1, \ldots, m_n) \in \Z^n$ such that the coefficient of $f$ at $X^m = X_1^{m_1}\cdots X_n^{m_n}$ is non zero and the \emph{Newton polytope} $\Newt(f)$ of $f$ is the convex hull of its support. Bergman showed in \cite{Bergman} that the asymptotic directions on which $\A(f)$ goes to infinity (that is, its tropical variety), coincides with the $(n-1)$-skeleton of the normal fan of $\Newt(f)$. For more details, see \cite[Section 1.4]{MacLagan}.

\subsection{Definitions in $\GL_n(\C)$}
To extend the notion of amoeba to other classes of varieties (in place of the torus) we need an extension of the notion of $\Log$ map. 
We note that the logarithm map $\Log: (\C^*)^n \to \R^n$ is invariant by multiplication with elements of the compact torus $(S^1)^n$. The compact torus is the maximal compact subgroup of $(\C^*)^n$. Similarly, the unitary group $U(n)$ is a maximal compact subgroup of $\GL_n(\C)$. Recall from linear algebra that the \emph{singular values decomposition} states that $GL_n(\C) = U(n) D_n U(n)$ where $A_n$ is the subgroup of diagonal matrices with positive real entries. If we write $A \in \GL_n(\C)$ as $A = P D Q$ where $P$, $Q$ are unitary matrices and $D$ is diagonal with positive diagonal entries, the diagonal entries of $D$ are the \emph{singular values} of $A$.   
\begin{definition}  \label{def-sLog}
Following \cite{Kaveh-Manon} we define the \emph{matrix logarithm map} (or \emph{spherical logarithm map}) on $\GL_n(\C)$ as follows:
    \[
    \sLog : \fonction{\GL_n(\C)}{\R^n/\mathcal{S}_n}{A}{(\ln(\lambda_1),\ldots,\ln(\lambda_n)) \textrm{ where the $\lambda_k$ are the singular values of $A$.}}
    \]
Here $\mathcal{S}_n$ is the symmetric group (the group of permutation of $\{1, \ldots, n\}$) which acts on $\R^n$ by permuting the coordinates. 
\end{definition}
We remark that the \emph{s} in $\sLog$ stands for \emph{spherical}. This is because, $\GL_n(\C)$ with left-right action of $\GL_n(\C) \times \GL_n(\C)$ is an important example of a spherical homogeneous space.  

By abuse of terminology and notation, we may identify subsets of $\R^n / \mathcal{S}_n$ with subsets of $\R^n$ invariant under permutations and $\R^n/\mathcal{S}_n$. Similarly, we identify functions on $\R^n/\mathcal{S}_n$ with functions on $\R^n$ that are invariant under permutations of the coordinates.

We note that $\abs{\sLog(A)} \rightarrow +\infty$ when at least one of the entires of $A$ approaches infinity or when $A$ approaches a non-invertible matrix. We will see in Section \ref{SEC:Tropical} other reasons that make of $\sLog$ a good generalisation of $\Log$. Recall that the singular values of a matrix $A$ are the square roots of the eigenvalues of the hermitian non negative matrix (positive when $A$ is invertible) $AA^*$ (or $A^*A$). Any matrix $A$ can be written as $UDV^*$ with $U$ and $V$ invertible and $D$ non negative diagonal. Then, the diagonal coefficients of $D$ are the singular values of $A$.
\begin{definition}
    When $I \subset \C[\GL_n]$ (the ring of regular functions on $\GL_n(\C)$) is an ideal, the matrix amoeba of a matrix spherical variety $Y = \V(I)$ is $\sA(Y) = \sA(I) = \sLog(Y) \subset \R^n/\mathcal{S}_n$. We shall refer to $\sA(I)$ as \emph{matrix amoeba} (or \emph{spherical amoeba}) of $Y$. When there is no ambiguity we may simply refer to it as the amoeba of $Y$.
\end{definition}
We will need a last definition that will help us to make the link between classical and matrix amoebas.
\begin{definition}
Let $f \in \C[\GL_n(\C)]$ be a regular function and let $A$, $B$ be invertible matrices. We define $\Psi_{A, B}$ as follows:
    \[\Psi_{A,B}(f) : \fonction{T}{\C}{z}{f(A\,\diag(z)\,B^{-1})},
    \]
where $\diag(z)$ is the diagonal matrix with coordinates of $z$ as diagonal entries. $\Psi_{A,B}$ is a $\C$-algebra homomorphism from $\C[\GL_n(\C)]$ to $\C[X^\pm]$.
\end{definition}
We recall that $\C[\GL_n(\C)]$ is the ring of functions of the form $\det^Nf$ where $f$ is a polynomial in the matrix entries and $N \in \Z$. 

\subsection{Elementary properties of matrix amoebas}
We begin by showing that matrix amoebas are closed in $\R^n/\mathcal{S}_n$ (with respect to the natural topology on it). Since all subvarieties are closed, it is enough to show that $\sLog$ is a closed map.
\begin{lemma}\label{LEM:Norm sLog}
For any $A \in \GL_n(\C)$ we have:
	\[
	\abs{\sLog(A)}_\infty = \frac{1}{2}\max\left\{\ln\norme{AA^*},\ln\norme{(AA^*)^{-1}}\right\} = \max\left\{\ln\norme{A},\ln\norme{A^{-1}}\right\} + \mathrm{O}(1)
	\]
	where $\abs{\cdot}_\infty$ is the infinite norm and $\norme{\cdot}$ is the operator norm associated the the Euclidean norm on $\C^n$.
\end{lemma}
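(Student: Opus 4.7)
The plan is to first unpack both sides directly in terms of singular values, and then translate between the operator norm of $A$ and the operator norm of $AA^*$ via the $C^*$-identity $\norme{B}^2 = \norme{B^*B}$.

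First, if $\lambda_1, \ldots, \lambda_n > 0$ denote the singular values of $A$, then by Definition \ref{def-sLog},
\[
\abs{\sLog(A)}_\infty = \max_{1 \leq k \leq n}\abs{\ln \lambda_k} = \max\bigl\{\ln \lambda_{\max},\, -\ln \lambda_{\min}\bigr\},
\]
where $\lambda_{\max}$ and $\lambda_{\min}$ are the largest and smallest singular values. Since $AA^*$ is Hermitian and positive definite with eigenvalues exactly $\lambda_1^2, \ldots, \lambda_n^2$, its operator norm equals its spectral radius, so $\norme{AA^*} = \lambda_{\max}^2$. Likewise $(AA^*)^{-1}$ has eigenvalues $\lambda_k^{-2}$, so $\norme{(AA^*)^{-1}} = \lambda_{\min}^{-2}$. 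Substituting these into the previous display gives
\[
\abs{\sLog(A)}_\infty = \max\Bigl\{\tfrac{1}{2}\ln\norme{AA^*},\, \tfrac{1}{2}\ln\norme{(AA^*)^{-1}}\Bigr\},
\]
which is the first equality.

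For the second equality I would invoke the standard identity $\norme{B}^2 = \norme{B^*B} = \norme{BB^*}$ for the operator norm associated to the Euclidean norm (this is immediate from $\norme{Bx}^2 = \scal{B^*Bx}{x}$ by Cauchy-Schwarz in one direction and submultiplicativity in the other). Applied to $B = A$ this yields $\tfrac{1}{2}\ln\norme{AA^*} = \ln\norme{A}$, and applied to $B = A^{-1}$, together with $(A^{-1})^* = (A^*)^{-1}$, it yields $\tfrac{1}{2}\ln\norme{(AA^*)^{-1}} = \ln\norme{A^{-1}}$. Therefore
\[
\tfrac{1}{2}\max\bigl\{\ln\norme{AA^*},\, \ln\norme{(AA^*)^{-1}}\bigr\} = \max\bigl\{\ln\norme{A},\, \ln\norme{A^{-1}}\bigr\},
\]
which is even stronger than the stated $+\mathrm{O}(1)$ bound and concludes the proof.

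There is no real obstacle here: everything reduces to the spectral interpretation of $\norme{\cdot}$ on Hermitian matrices plus the $C^*$-identity. The only thing to be careful about is to recognise that once one passes from $A$ to $AA^*$, the operator norm and the spectral radius coincide, which is why no error term is needed.
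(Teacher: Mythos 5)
Your proof of the first equality matches the paper's verbatim: both identify the singular values of $A$ with the square roots of the eigenvalues of $AA^*$ and use the fact that the operator norm of a Hermitian positive matrix equals its spectral radius.

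For the second part, you take a genuinely different and in fact sharper route. The paper only proves the weaker $+\mathrm{O}(1)$ statement, and does so via a compactness argument: it sets $C_A = \norme{AA^*}/\norme{A}^2$, observes by submultiplicativity that $0 < C_A \leq 1$, and then invokes compactness of the unit sphere to get a uniform lower bound $C_A \geq C > 0$, so that $\ln C_A = \mathrm{O}(1)$. You instead invoke the $C^*$-identity $\norme{AA^*} = \norme{A}^2$ for the operator norm induced by the Euclidean norm (equivalently, $\norme{A}$ is the largest singular value and $\norme{AA^*}$ is its square), which gives exact equality $\frac{1}{2}\max\{\ln\norme{AA^*},\ln\norme{(AA^*)^{-1}}\} = \max\{\ln\norme{A},\ln\norme{A^{-1}}\}$ with no error term at all. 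Your argument is correct, shorter, and strictly stronger; the paper's compactness step is in fact unnecessary once one remembers that the spectral norm satisfies the $C^*$-identity exactly, and $C_A \equiv 1$. The only thing the paper's compactness argument buys is robustness to replacing $\norme{\cdot}$ by a non-spectral operator norm, but that generality is not claimed or used.
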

\begin{proof}
Let $A \in \GL_n(\C)$. The matrix $AA^*$ is a positive hermitian matrix so it can be written as $U\,\diag(\lambda)\,U^*$ with $U$ unitary and $\lambda = (\lambda_1 \geq \cdots \geq \lambda_n > 0)$. Moreover, being hermitian, its norm is equal to its spectral radius $\lambda_1$. Also $(AA^*)^{-1} = U\,\diag(\lambda_1^{-1},\ldots,\lambda_n^{-1})\,U^*$. Thus the norm of $(AA^*)^{-1}$ is $\lambda_n^{-1}$ and we have $\d \frac{1}{2}\max\left\{\ln\norme{AA^*},\ln\norme{(AA^*)^{-1}}\right\} = \frac{1}{2}\max\left\{\ln(\lambda_1),-\ln(\lambda_n)\right\}$. On the other hand, the singular values of $A$ are the square roots of the eigenvalues of $AA^*$, namely,  $\sqrt{\lambda_1},\ldots,\sqrt{\lambda_n}$. We deduce that
\begin{align*}
	\abs{\sLog(A)}_\infty & = \max\left\{\abs{\ln\left(\sqrt{\lambda_1}\right)},\ldots,\abs{\ln\left(\sqrt{\lambda_n}\right)}\right\}\\
	& = \frac{1}{2}\max\left\{\ln(\lambda_1),-\ln(\lambda_1),\ldots,\ln(\lambda_n),-\ln(\lambda_n)\right\}\\
	& = \frac{1}{2}\max\left\{\ln(\lambda_1),-\ln(\lambda_n)\right\},
\end{align*}
which proves the first equality.

For a nonzero matrix $A \in \Mat_n(\C)$, let $C_A = \frac{\norme{AA^*}}{\norme{A}^2}$. Since $\norme{AA^*} \leq \norme{A}\norme{A^*} = \norme{A}^2$ we have $0 < C_A \leq 1$. Let $C = \min_{\norme{A} = 1} C(A)$. This $\min$ is well-defined because the unit sphere is compact and $A \mapsto C_A$ is continuous. We conclude that for all nonzero matrices $A$ we have:
\[
C_A = \frac{\norme{AA^*}}{\norme{A}^2} = \norme{\frac{A}{\norme{A}}\left(\frac{A}{\norme{A}}\right)^*} \geq C > 0,
\]
because $\d \frac{A}{\norme{A}}$ is in the unit sphere. We deduce that $A \mapsto \ln(C_A) = \mathrm{O}(1)$, so
\[
\frac{1}{2}\max\{\ln\norme{AA^*},\ln\norme{(AA^*)^{-1}}\} = \max\{\ln\norme{A},\ln\norme{A^{-1}}\} + \mathrm{O}(1),
\]
as required.
\end{proof}
\begin{proposition}\label{PRO:sLog continuous}
    $\sLog$ is continuous for the distance $\d d : (x,y) \mapsto \min_{\sigma \in \mathcal{S}_n}\{\sigma \cdot x - y\}$ on $\R^n/\mathcal{S}_n$.
\end{proposition}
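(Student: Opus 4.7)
The plan is to factor $\sLog$ through the intermediate map $A \mapsto AA^*$, then the ordered spectrum, then componentwise logarithm, and verify continuity at each stage. The map $A \mapsto AA^*$ from $\GL_n(\C)$ into the space of positive definite Hermitian matrices is polynomial in the entries, so manifestly continuous. The square roots of its eigenvalues are the singular values of $A$, so $\sLog(A)$ is, up to the action of $\mathcal{S}_n$, the vector of logarithms of these ordered quantities.

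The core technical ingredient is the standard perturbation estimate (Weyl's inequality for singular values): if $\sigma_1(A) \geq \cdots \geq \sigma_n(A)$ denote the singular values of $A$ arranged in decreasing order, then
\[
|\sigma_k(A) - \sigma_k(B)| \leq \norme{A - B}
\]
for every $k$, where $\norme{\cdot}$ is the operator norm. This shows that the map $A \mapsto (\sigma_1(A), \ldots, \sigma_n(A)) \in \R^n$, using the decreasing ordering as a section of the projection $\R^n \to \R^n/\mathcal{S}_n$, is Lipschitz on $\Mat_n(\C)$.

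To pass to $\sLog$ we need to control the logarithm. Fix $A_0 \in \GL_n(\C)$ and let $\sigma_0 = \sigma_n(A_0) > 0$. For any $A$ with $\norme{A - A_0} \leq \sigma_0/2$, the Weyl inequality gives $\sigma_k(A) \geq \sigma_0/2$ for every $k$, so $\ln$ is uniformly Lipschitz (with constant $2/\sigma_0$) on the interval containing all $\sigma_k(A)$ and $\sigma_k(A_0)$. Hence, writing $x = (\ln \sigma_1(A), \ldots, \ln \sigma_n(A))$ and $x_0 = (\ln \sigma_1(A_0), \ldots, \ln \sigma_n(A_0))$, we obtain $|x - x_0| \leq (2/\sigma_0) \norme{A - A_0}$ on this neighborhood.

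Finally, the quotient distance satisfies $d(\sLog(A),\sLog(A_0)) \leq |x - x_0|$ since $x$ and $x_0$ are specific lifts of $\sLog(A)$ and $\sLog(A_0)$ (and one can always take $\sigma = \id$ in the minimum). This yields the desired continuity, even local Lipschitz continuity, at $A_0$. There is no real obstacle beyond invoking Weyl's inequality; the only mild care needed is to ensure singular values stay bounded away from $0$ on a neighborhood of $A_0$, which is what invertibility of $A_0$ guarantees.
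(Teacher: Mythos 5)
Your proof is correct, and it takes a genuinely different route from the paper's. The paper argues qualitatively: given a convergent sequence $A_m \to A$ in $\GL_n(\C)$, it passes to $A_m A_m^* \to AA^*$, notes that the characteristic polynomials $\chi_{A_m A_m^*}$ converge coefficient-wise to $\chi_{AA^*}$, and invokes continuity of the roots of a polynomial to conclude $\sLog(A_m) \to \sLog(A)$. You instead invoke Weyl's perturbation inequality for singular values, $|\sigma_k(A) - \sigma_k(B)| \leq \norme{A - B}$, applied directly to $A$ rather than to $AA^*$, combine it with the local Lipschitz constant of $\ln$ away from zero (using invertibility of the base point to keep singular values bounded below), and observe that the quotient metric $d$ is dominated by the distance between the decreasingly ordered lifts. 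Your argument is quantitative and delivers a strictly stronger conclusion: $\sLog$ is locally Lipschitz on $\GL_n(\C)$, not merely continuous. The paper's argument is shorter to state given the citation to continuity-of-roots, but yields no modulus of continuity; yours is self-contained modulo the (equally standard) Weyl inequality. One small remark: your opening paragraph about factoring through $A \mapsto AA^*$ is not actually used in the estimate that follows, since you apply Weyl's bound to $A$ itself — you could drop it without loss.
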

\begin{proof}
First of all, it is straightforward to see that $d$ is a well-defined distance for which
\[
\varphi : \fonction{\{x \in \R^n|x_1 \leq \cdots \leq x_n\}}{\R^n/\mathcal{S}_n}{x}{\mathcal{S}_n \cdot x}
\]
is an isometry. If $(A_m)_{m \in \N}$ is a sequence of invertible matrices that converges to some invertible matrix $A$. Then $A_mA_m^* \tend{m}{+\infty} AA^*$ and thus the characteristic polynomial $\chi_{A_mA_m^*}$ converges to the characteristic polynomial $\chi_{AA^*}$. By the continuity of the roots of a polynomial \cite{Pilaud} we see that $\sLog(A_m) \tend{m}{+\infty} \sLog(A)$. That is, $\sLog$ is continuous.
\end{proof}
\begin{proposition}
	$\sLog$ is closed.
\end{proposition}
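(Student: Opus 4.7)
The plan is to prove that $\sLog$ is a closed map by the standard sequential argument: take a closed subset $F \subset \GL_n(\C)$ and a sequence $x_m = \sLog(A_m)$ with $A_m \in F$ converging (in the quotient metric $d$ introduced in Proposition \ref{PRO:sLog continuous}) to some $x \in \R^n/\mathcal{S}_n$; then produce a limit $A \in F$ with $\sLog(A) = x$. The whole argument hinges on extracting a subsequence of the $A_m$ converging inside $\GL_n(\C)$, and for that the essential tool is Lemma \ref{LEM:Norm sLog}.

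First I would observe that convergence in $(\R^n/\mathcal{S}_n, d)$ forces $|x_m|_\infty$ to be bounded, because the map $|\cdot|_\infty$ descends to the quotient (it is $\mathcal{S}_n$-invariant) and is continuous there; indeed via the isometry $\varphi$ one can realize $x_m$ and $x$ concretely as the ordered tuples of their representatives, and these ordered tuples converge componentwise. By Lemma \ref{LEM:Norm sLog},
\[
\max\{\ln\|A_m\|,\ln\|A_m^{-1}\|\} = |\sLog(A_m)|_\infty + \mathrm{O}(1),
\]
so both $\|A_m\|$ and $\|A_m^{-1}\|$ are bounded uniformly in $m$. Hence the sequence $(A_m)$ lies in a subset of $\GL_n(\C)$ that is bounded in the operator norm and bounded away from the set of non-invertible matrices: this is a closed and bounded subset of $\Mat_n(\C)$ contained in $\GL_n(\C)$, hence compact.

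By compactness we can extract a subsequence $A_{m_k} \to A$, and the uniform bound on $\|A_{m_k}^{-1}\|$ guarantees $A \in \GL_n(\C)$ (otherwise $\det(A_{m_k}) \to 0$ would force $\|A_{m_k}^{-1}\| \to +\infty$). Since $F$ is closed, $A \in F$. By the continuity of $\sLog$ established in Proposition \ref{PRO:sLog continuous}, $\sLog(A_{m_k}) \to \sLog(A)$, and by uniqueness of limits $\sLog(A) = x$. Therefore $x \in \sLog(F)$, proving that $\sLog(F)$ is closed.

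The only genuinely delicate point is the interplay with the quotient: one must be careful that ``boundedness'' and ``convergence'' in $\R^n/\mathcal{S}_n$ translate correctly to statements about representatives, but the isometry $\varphi$ from the ordered Weyl chamber $\{x_1 \leq \cdots \leq x_n\}$ to $\R^n/\mathcal{S}_n$ reduces this to an elementary fact about componentwise convergence of ordered tuples. Everything else is routine once Lemma \ref{LEM:Norm sLog} provides the key properness-type estimate relating $|\sLog(A)|_\infty$ to $\|A\|$ and $\|A^{-1}\|$.
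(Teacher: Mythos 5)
Your proof is correct and follows essentially the same route as the paper: extract a convergent subsequence using the boundedness of $\|A_m\|$ and $\|A_m^{-1}\|$ furnished by Lemma~\ref{LEM:Norm sLog}, then conclude via continuity of $\sLog$. The only cosmetic difference is how you certify that the subsequential limit $A$ is invertible — you argue via the determinant and the uniform bound on $\|A_{m_k}^{-1}\|$, whereas the paper also passes to a convergent subsequence of $A_m^{-1} \to B$ and observes $AB = I_n$; both are fine.
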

\begin{proof}
Let $F \subset \GL_n(\C)$ be a closed subset and let $(x_m)_{m \in \N}$ be a sequence of elements of $\sLog(F)$ that converges to some $x \in \R^n$. Then for any $m \in \N$, we can find $A_m \in \GL_n(\C)$ with $x_m = \sLog(A_m)$. The sequence $(x_m)$ converges so it is bounded. By Lemma \ref{LEM:Norm sLog}, the sequences $(A_m)$ and $\left(A_m^{-1}\right)$ are bounded. Thus,  after going to a subsequence, we can assume, $A_m \tend{m}{+\infty} A$ and $A_m^{-1} \tend{m}{+\infty} B$ for some matrices $A$ and $B$ in $\Mat_n(\C)$. By the continuity of the product, $AB = I_n$ so $A$ is invertible with $A^{-1} = B$. As $F$ is closed, $A \in F$. We see that $\d x = \lim_{m \rightarrow +\infty} x_m = \lim_{m \rightarrow +\infty} \sLog(A_m) = \sLog(A)$ by the continuity of $\sLog$ (Proposition \ref{PRO:sLog continuous}). $A \in F$ so $x \in \sLog(F)$, which proves the proposition.
\end{proof}
\begin{corollary}
	Any matrix amoeba is closed.
\end{corollary}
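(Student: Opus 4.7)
The corollary is an immediate consequence of the preceding proposition, so the plan is essentially a one-line deduction together with the observation that subvarieties are closed in the analytic topology.

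First I would unpack the definition: a matrix amoeba is $\sA(Y) = \sLog(Y)$, where $Y = \V(I)$ for some ideal $I \subset \C[\GL_n]$. The elements of $I$ are regular functions on $\GL_n(\C)$, hence continuous with respect to the analytic topology inherited from the embedding $\GL_n(\C) \hookrightarrow \C^{n^2+1}$ used at the start of the section. Therefore $Y$, being a finite intersection (or more generally, intersection) of preimages of $\{0\}$ under continuous maps, is closed in $\GL_n(\C)$.

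Next I would simply apply the previous proposition, which states that $\sLog : \GL_n(\C) \to \R^n/\mathcal{S}_n$ is a closed map. Applied to the closed set $Y$, this gives that $\sLog(Y) = \sA(Y)$ is closed in $\R^n/\mathcal{S}_n$ (with respect to the quotient topology, equivalently the topology induced by the distance $d$ defined in Proposition \ref{PRO:sLog continuous}). That is the corollary.

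There is no real obstacle here; the substance of the result is already contained in the closedness of $\sLog$, whose proof in turn relies on the norm estimate from Lemma \ref{LEM:Norm sLog} to extract a convergent subsequence of matrices and inverses simultaneously. The only thing to be careful about is making sure we are using the correct notion of ``closed'' on the quotient $\R^n/\mathcal{S}_n$, but since $\mathcal{S}_n$ is finite the quotient map is open and closed, so there is no ambiguity.
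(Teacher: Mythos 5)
Your proof is correct and takes the same route as the paper: observe that a subvariety $Y = \V(I)$ is closed in $\GL_n(\C)$, then apply the preceding proposition that $\sLog$ is a closed map to conclude $\sA(Y) = \sLog(Y)$ is closed. The extra remarks about the quotient topology on $\R^n/\mathcal{S}_n$ are harmless clarifications but add nothing beyond what the paper already implicitly assumes.
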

\begin{remark}
Note that $sLog$ is in fact a proper map. This is because being proper is equivalent to being closed and the inverse image of any singleton be compact. Lemma \ref{LEM:Norm sLog} implies that the inverse image, under $\sLog$, of any singleton is bounded and it is closed by continuity of $\sLog$.
\end{remark}

Next, we describe matrix amoebas in terms of classical amoebas (in the torus).
\begin{proposition}\label{PRO:Union amoebas}
For any ideal $I \subset \C[\GL_n]$
we have 
\[
\sA(I) = \bigcup_{(U,V) \in \U(n) \times \U(n)} \A(\Psi_{U,V}(I)).\]
\end{proposition}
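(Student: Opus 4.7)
The plan is to prove the equality by double inclusion. The crucial fact, beyond the singular value decomposition itself, is that left and right multiplication by unitary matrices preserves singular values: if $U, V \in \U(n)$ and $z \in (\C^*)^n$, then $U \diag(z) V^*$ has singular values $\abs{z_1}, \ldots, \abs{z_n}$. This is because
\[
(U\diag(z)V^*)(U\diag(z)V^*)^* = U \diag(\abs{z_1}^2, \ldots, \abs{z_n}^2) U^*,
\]
which is unitarily conjugate to a diagonal matrix whose eigenvalues are precisely the $\abs{z_i}^2$.

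For the inclusion $\sA(I) \subseteq \bigcup_{(U,V)} \A(\Psi_{U,V}(I))$, I would start from an arbitrary $x = \sLog(A)$ with $A \in \V(I)$ and use SVD to write $A = UDV^*$, where $U, V \in \U(n)$ and $D = \diag(d_1, \ldots, d_n)$ with $d_i > 0$. Setting $z = (d_1, \ldots, d_n) \in (\C^*)^n$ and using $V^{-1} = V^*$, one has for every $f \in I$:
\[
\Psi_{U,V}(f)(z) = f(U\diag(z)V^{-1}) = f(UDV^*) = f(A) = 0,
\]
so that $z \in \V(\Psi_{U,V}(I))$. Moreover $\Log(z) = (\ln d_1, \ldots, \ln d_n)$ equals $\sLog(A) = x$ in $\R^n / \mathcal{S}_n$, which gives $x \in \A(\Psi_{U,V}(I))$.

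For the reverse inclusion, I would fix $(U,V) \in \U(n) \times \U(n)$ and $x = \Log(z) \in \A(\Psi_{U,V}(I))$ with $z \in \V(\Psi_{U,V}(I))$. Setting $A = U\diag(z)V^{-1} \in \GL_n(\C)$, the identity $f(A) = \Psi_{U,V}(f)(z) = 0$ for all $f \in I$ forces $A \in \V(I)$. By the preliminary observation on singular values, one gets $\sLog(A) = (\ln\abs{z_1}, \ldots, \ln\abs{z_n}) = \Log(z) = x$ in $\R^n / \mathcal{S}_n$, so $x \in \sA(I)$.

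There is no substantial obstacle here: everything reduces to the singular value decomposition together with the unitary invariance of singular values. The only bookkeeping point is the standing identification, adopted earlier in the paper, of $\R^n / \mathcal{S}_n$ with permutation-invariant subsets of $\R^n$; under this identification both sides of the claimed equality are subsets of $\R^n$ and the two computations above make immediate sense.
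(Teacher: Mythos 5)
Your proof is correct and takes essentially the same approach as the paper's: both directions are handled by the singular value decomposition and the observation that $U\diag(z)V^*$ (for unitary $U,V$) has singular values $\abs{z_1},\ldots,\abs{z_n}$, which you just state up front while the paper verifies it inline in the $\supset$ direction.
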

\begin{proof}
\noindent\framebox{$\subset$} If $x \in \sA(I)$, $x$ can be written as $(\ln(\lambda_1),\ldots,\ln(\lambda_n))$ where the $\lambda_k$ are the singular values of a matrix $A \in \V(I)$. Therefore, we can write $A$ as $U\, \diag(\lambda)\, V^*$ with $U$ and $V$ unitary. Then for all $f \in I$ we have:
\[
0 = f(A) = f(U\, \diag(\lambda) \, V^*) = \Psi_{U,V}(f)(\lambda).
\]
Thus $\lambda \in \V(\Psi_{U,V}(I))$ which shows that $x = \Log(\lambda) \in \A(\Psi_{U,V}(I))$.\\

\noindent\framebox{$\supset$} Let $U$ and $V$ be unitary matrices and $x \in \A(\Psi_{U,V}(I))$. We can write $x$ as $\Log(z) = (\ln\abs{z_1},\ldots,\ln\abs{z_n})$ where $z \in \V(\Psi_{U,V}(I))$. For all $f \in I$, we have $0 = \Psi_{U,V}(f)(z) = f(U\,\diag(z)\, V^*)$. Let $A = U\, \diag(z)\, V^*$. The singular values of $A$ are the square roots of the eigenvalues of $AA^* = U\, \diag(z)\, \overline{\diag(z)}U^* = U\, \diag\left(\abs{z_1}^2,\ldots,\abs{z_n}^2\right)\, U^*$, i.e., $\abs{z_1},\ldots,\abs{z_n}$. As for all $f \in I$, $f(A) = 0$, we deduce that $(\abs{z_1},\ldots,\abs{z_n})$ is in $\V(f)$ so $x = \Log(z) \in \sA(I)$.
\end{proof}
Even thought the union in the above proposition is over an uncountable set, it is still useful. For example we can use it prove the following.
\begin{proposition}
	For any $f \in \C[\GL_n(\C)]$, the connected components of $\sA(f)^C$ are convex.
\end{proposition}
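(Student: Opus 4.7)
The plan is to combine Proposition~\ref{PRO:Union amoebas} with the classical convexity theorem for complements of torus amoebas and a general topological lemma about intersections. Taking complements in Proposition~\ref{PRO:Union amoebas} yields
$$\sA(f)^C \;=\; \bigcap_{(U,V) \in \U(n)\times\U(n)} \A(\Psi_{U,V}(f))^C.$$
Writing $f = \det^N g$ with $g$ a polynomial in the matrix entries and $N \in \Z$, and using that $\det(U\diag(z)V^*) = \det(U)\det(V^*)\,z_1\cdots z_n$ is a non-vanishing monomial on $T$, one sees that $\Psi_{U,V}(f)$ is a genuine Laurent polynomial in $\C[X^\pm]$. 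Hence each $\A(\Psi_{U,V}(f))^C$ is an open subset of $\R^n$ whose connected components are convex, by the classical Gelfand--Kapranov--Zelevinsky theorem recalled in Section~1.1.

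It then remains to establish a topological lemma that I would prove separately: if $\{S_\alpha\}_{\alpha \in A}$ is any family of open subsets of $\R^n$ whose connected components are all convex, then the connected components of $S := \bigcap_\alpha S_\alpha$ are convex as well. To prove this, fix a connected component $C$ of $S$, and for each $\alpha$ let $C_\alpha(x)$ denote the connected component of $x$ in $S_\alpha$. Because $S_\alpha$ is locally connected, each $C_\alpha(x)$ is open, so $x \mapsto C_\alpha(x)$ is locally constant on $S$ and thus constant on the connected set $C$; call the common value $C_\alpha$. Then $C \subset \bigcap_\alpha C_\alpha$, and the right-hand side, being an intersection of convex sets, is itself convex (hence connected), is contained in $S$, and meets $C$; so $\bigcap_\alpha C_\alpha \subset C$, giving equality. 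Consequently $C$ is convex.

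The main obstacle is essentially this topological lemma. Because the index set in Proposition~\ref{PRO:Union amoebas} is uncountable, it is worth emphasising that the argument above makes no use of finiteness or countability; the only ingredients beyond convexity of components in each $S_\alpha$ are openness of $S_\alpha$ (equivalently, local connectedness), which turns ``locally constant'' into ``constant on connected components'', and the fact that an arbitrary intersection of convex sets is convex.
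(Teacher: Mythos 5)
Your proof is correct and takes essentially the same route as the paper's: both pass from Proposition~\ref{PRO:Union amoebas} to the intersection formula $\sA(f)^C = \bigcap_{(U,V)} \A(\Psi_{U,V}(f))^C$ and deduce convexity of each component of $\sA(f)^C$ from the convexity of the corresponding component of each $\A(\Psi_{U,V}(f))^C$. You package the final step as a general-purpose topological lemma about intersections of open sets with convex components, whereas the paper argues more concretely by using path-connectedness of the open component $C$ to place $x$ and $y$ in the same convex component of each $\A(\Psi_{U,V}(f))^C$ and then observing the segment $[x,y]$ lies in the intersection, but the underlying mechanism is identical.
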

\begin{proof}
By the proposition \ref{PRO:Union amoebas}, we know that $\d \sA(f)^C = \bigcap_{(U,V) \in \U(n)} \A(\Psi_{U,V}(f))^C$. Let $C$ be a connected component of $\sA(f)^C$ and let $x,y \in C$. The set $\sA(f)^C$ is open so its connected components are path connected. 
For all unitary matrices $U$, $V$ we have $C \subset \sA(f)^C \subset \A(\Psi_{U,V}(f))^C$. This shows that $x$ and $y$ belong to the same path connected component in $\A(\Psi_{U,V}(f))^C$ which is convex. Therefore the line segment joining $x$ and $y$ lies in $\A(\Psi_{U,V}(f))^C$, for any unitary matrices $U$, $V$. It follows that this line segment lies in $ \sA(f)^C$ as required.
\end{proof}

\section{Matrix amoebas of hypersurfaces and Ronkin function}\label{SEC:Amoebas}
In this section we study matrix amoebas for hypersurfaces. 
In particular, we generalise the notion of Ronkin function. It was, as its name suggests it, introduced by Ronkin in \cite{Ronkin}. It is a powerful tool to study the shape of amoebas of hypersurfaces. In particular, Passare and Rullgård used it in \cite{Passare} to study the spine of the amoebas which gives an easy way to compute its global shape and its homology. We will see how to extend these to matrix amoebas.

\subsection{Definitions and results for the torus $(\C^*)^n$}
Let $f$ be a Laurent polynomial. 
One defines the \emph{Ronkin function} of $f$ by:
\[
    R_f : \fonction{\R^n}{\R}{x}{\frac{1}{(2\i\pi)^n}\int_{[0,2\pi]^n} \ln\abs{f(\e^{x_1 + \i\theta_1},\ldots,\e^{x_n + \i\theta_n})} \, \diff\theta_1\cdots\diff\theta_n.}
    \]

It can be rewritten as:
\[
R_f(x) = \int_{\T^n} \ln\abs{f(\e^{x_1}\lambda_1,\ldots,\e^{x_n}\lambda_n)} \, \diff\mu(\lambda_1,\ldots,\lambda_n)
\]
where $\mu$ is the unique probability Haar measure on the compact Lie group $\T^n = (S^1)^n$ where $S^1$ denotes the unit circle. An important property of the Ronkin function is that it is convex on $\R^n$, affine on every connected component of $\A(f)^C$, and conversely, if $x \in \A(f)^C$, $R_f$ is not affine on any open neighborhood of $x$. Moreover, consider the order function $\nu_f$ given by (see \cite{Forsberg}):
\[
\nu_f : \fonction{\Gamma(\A(f)^C)}{\R^n}{C}{\nabla R_f(x) \textrm{ for some $x \in C$}}
\]
Then $\nu_f$ is injective and we have: the set of vertices of $\Newt(f) \subset \Img(\nu_f) \subset \Newt(f) \cap \Z^n$ (recall that $\Newt(f)$ is the Newton polytope of $f$ defined as the convex hull of exponents of monomials appearing in $f$). We refer to \cite{Passare,Forsberg} for several interesting results in this regard. The vector $\nu(C)$ is called the \emph{order of the connected component} $C \subset \A(f)^C$.

\subsection{Definitions for $\GL_n(\C)$}
From now on, unless otherwise stated, $f$ is an element of $\C[\GL_n(\C)]$. As the unitary group $\U(n)$ is a compact Lie group, there is a unique probability measure $\mu$ (the Haar measure) that is invariant under left-right multiplication.

\begin{definition}
We define the \emph{Ronkin function} of $f$ by:
    \[
    R_f : \fonction{\R^n}{\R \cup \{-\infty\}}{x}{\int_{\U(n)}\int_{\U(n)} \ln\abs{f(U\exp(\diag(x))V^*)} \, \diff\mu(U)\diff\mu(V)}
    \]
Note that the set of all the $U\,\exp(\diag(x))\,V^*$ $\sLog^{-1}(x)$ which is compact by the properness of $\sLog$. So $f$ is bounded on this set. It follows that the defining integral of $R_f$ is finite or $-\infty$. 
\end{definition}

We will also need to look at the coefficients of the Laurent polynomial $\Psi_{A,B}(f)$.
\begin{definition}
    Let $Q_m \in \C\!\left[\GL_n(\C) \times \GL_n(\C) \right]$ be defined by:
    \[
    f(A\diag(z)B^{-1}) = \sum_{m \in \Z^n} Q_m(A,B)z^m,
    \]
    for any $z \in T$ and invertible matrices $A$ and $B$.
\end{definition}
The regular functions $Q_m$ will be important in the study of matrix amoebas. 
We define the support of $f$ and its matrix Newton polytope using the $Q_m$.
\begin{definition}
    The \emph{support} $S_f$ of $f \in \GL_n(\C)$ is the set of $m \in \Z^n$ such that $Q_m$ is not identically zero. The \emph{matrix Newton polytope} of $f$, $\sNewt(f)$ is the convex hull of $S_f$.
\end{definition}
The support $S_f$ (respectively the polytope $\sNewt(f)$) coincides with $S_{\Psi_{U,V}(f)}$ (respectively $\sNewt(\Psi_{U,V}(f))$), for generic choices of unitary matrices $U$ and $V$. More precisely, we have the following.
\begin{proposition}\label{PRO:Sf = SPsiUVf}
    For almost every pair $(U, V)$ of unitary matrices (with respect to the Haar measure on $U(n) \times U(n)$), we have $S_f = S_{\Psi_{U,V}(f)}$.
\end{proposition}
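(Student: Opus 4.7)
The plan is to reduce the claim to a measure-zero statement for each fixed $m$ and then exploit the Zariski-density of $\U(n)$ in $\GL_n(\C)$. By the very definition of $Q_m$, one has $\Psi_{U,V}(f)(z) = \sum_{m\in\Z^n} Q_m(U,V)\,z^m$, so for every $(U,V)$ the inclusion $S_{\Psi_{U,V}(f)} = \{m : Q_m(U,V)\neq 0\} \subseteq S_f$ is automatic. It therefore suffices to show that the ``bad'' set
\[
B \;=\; \bigcup_{m\in S_f} \bigl\{(U,V) \in \U(n)\times \U(n) : Q_m(U,V) = 0\bigr\}
\]
has Haar measure zero.

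Next I would observe that $S_f$ is finite. Writing $f = g\cdot \det^{-N}$ with $g$ a polynomial in the matrix entries and $N\geq 0$, the substitution gives
\[
f(A\,\diag(z)\,B^{-1}) \;=\; \frac{g(A\,\diag(z)\,B^{-1})}{\bigl(\det(A)\det(B)^{-1}\,z_1\cdots z_n\bigr)^{N}},
\]
and since $g(A\,\diag(z)\,B^{-1})$ is polynomial in $z$, the whole expression is a Laurent polynomial in $z$ with finite support. Hence only finitely many $Q_m$ are non-zero as regular functions on $\GL_n(\C)\times \GL_n(\C)$, and $B$ is a finite union. It is therefore enough to prove that for each $m \in S_f$ the zero set $Z_m = \{Q_m = 0\} \cap \U(n)^2$ is a null set for Haar measure.

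For each $m \in S_f$, $Q_m$ is by definition a non-zero regular function on $\GL_n(\C)\times \GL_n(\C)$. The key input is that $\U(n)$ is Zariski-dense in $\GL_n(\C)$: if a regular function $h$ on $\GL_n(\C)$ vanishes on $\U(n)$, then $X\mapsto h(e^{X})$ is a holomorphic function on a neighborhood of $0$ in $\mathfrak{gl}_n(\C)$ vanishing on the totally real subspace $\mathfrak{u}(n)$, hence vanishing identically near $0$, and therefore $h\equiv 0$ by irreducibility of $\GL_n(\C)$. Applying this to the product, $\U(n)^2$ is Zariski-dense in $\GL_n(\C)^2$, so the restriction $Q_m|_{\U(n)^2}$ is a non-zero real-analytic function on the connected real-analytic manifold $\U(n)\times\U(n)$. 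The zero set of such a function has measure zero with respect to any smooth positive measure, in particular the Haar measure, and taking the finite union over $m\in S_f$ finishes the argument.

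The only non-routine step is the Zariski-density of $\U(n)$ in $\GL_n(\C)$; once this is granted, the rest is a combination of the finiteness of $S_f$ and the classical fact that a nontrivial real-analytic equation cuts out a measure-zero subset of a connected real-analytic manifold.
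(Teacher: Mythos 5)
Your proof is correct, and it takes a genuinely different route from the paper's. The paper proves the required measure-zero statement as a standalone technical lemma (Lemma~\ref{LEM:Zero negligible}): it shows by induction on degree that a nonzero polynomial on $\Mat_n(\C)$ has measure-zero zero set on $\U(n)$, using an auxiliary fact that if $F^{-1}\{0\}$ has positive measure for a $\mathcal{C}^1$ function then so does $(\diff F)^{-1}\{0\}$, together with the splitting $\Mat_n(\C)=\mathcal{H}(n)\oplus\i\mathcal{H}(n)$ to pass from vanishing of $\diff P$ on $T_U\U(n)$ to vanishing on all of $\Mat_n(\C)$. You instead establish Zariski-density of $\U(n)$ in $\GL_n(\C)$ directly via the totally real embedding $\mathfrak{u}(n)\subset\mathfrak{gl}_n(\C)$ and holomorphic continuation, and then invoke the classical fact that a nonzero real-analytic function on a connected manifold has a measure-zero zero set. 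Both arguments hinge on the same totally real structure of $\U(n)\subset\GL_n(\C)$; yours is shorter because it outsources the inductive step to a standard result about real-analytic functions, and it also cleanly handles the fact that $Q_m$ is only a \emph{regular} function (possibly involving $\det^{-N}$) rather than a polynomial, which the paper's lemma as literally stated does not quite cover. Your reduction to finiteness of $S_f$ and to measure-zero of each $\{Q_m=0\}\cap\U(n)^2$ mirrors what the paper implicitly assumes when applying its lemma.
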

\begin{proof}
The claim follows from Lemma \ref{LEM:Zero negligible} applied to the $Q_m$.
\end{proof}

\subsection{Some properties of the Ronkin function}
The following expresses the matrix Ronkin function in terms of the classical Ronkin functions. It will be useful as it allows us to reduce statements about the matrix Ronkin function to those of classical Ronkin function.  
\begin{proposition}\label{PRO:Expression Rf}
    For all $f \in \C[\GL_n]$, for all $x \in \R^n$,
    \[
    R_f(x) = \int_{\U(n) \times \U(n)} R_{\Psi_{U,V}(f)}(x) \, \diff\mu^2(U,V),
    \]
    where $d\mu^2$ denotes the Haar measure on $U(n) \times U(n)$.
\end{proposition}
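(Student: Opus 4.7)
The plan is to unfold the definition of the classical Ronkin function, swap the order of integration by Fubini, and then exploit the left-invariance of the Haar measure on $\U(n)$ under multiplication by diagonal unitary matrices.

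More precisely, for fixed unitary $U,V$ and $x\in\R^n$, applying the definition of the classical Ronkin function to the Laurent polynomial $\Psi_{U,V}(f)$ gives
\[
R_{\Psi_{U,V}(f)}(x) = \int_{\T^n} \ln\bigl|f\bigl(U\,\diag(\lambda)\,\exp(\diag(x))\,V^*\bigr)\bigr|\,\diff\mu_{\T^n}(\lambda),
\]
where I have used that $\diag(e^{x_1}\lambda_1,\ldots,e^{x_n}\lambda_n)=\diag(\lambda)\exp(\diag(x))$ and $V^{-1}=V^*$. Integrating both sides against $\diff\mu^2(U,V)$ and interchanging the integrals, the right-hand side becomes
\[
\int_{\T^n}\!\int_{\U(n)^2} \ln\bigl|f\bigl(U\,\diag(\lambda)\,\exp(\diag(x))\,V^*\bigr)\bigr|\,\diff\mu^2(U,V)\,\diff\mu_{\T^n}(\lambda).
\]
For each fixed $\lambda\in\T^n$, the matrix $\diag(\lambda)$ is unitary, so the change of variable $U'=U\diag(\lambda)$ preserves $\mu$ by left-invariance of the Haar measure. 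The inner double integral therefore equals $R_f(x)$ and is independent of $\lambda$, so integrating against the probability measure $\mu_{\T^n}$ just returns $R_f(x)$.

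The only delicate point is justifying the exchange of integrals, because $\ln|f|$ can equal $-\infty$ on the zero locus of $f$. Write $\ln|f|=(\ln|f|)^+-(\ln|f|)^-$. Since the set of matrices $U\exp(\diag(x))V^*$ with $U,V$ unitary is compact (this is a consequence of Lemma \ref{LEM:Norm sLog}, or directly of the compactness of $\U(n)$ and continuity), $f$ is bounded there, so $(\ln|f|)^+$ is bounded and trivially integrable. To handle $(\ln|f|)^-$ one invokes Tonelli on the nonnegative function $(\ln|f|)^-$: the interchange is unconditional, and either both integrals are finite (in which case Fubini applies to $\ln|f|$ itself) or both are $+\infty$, meaning that $R_f(x)=-\infty$ and, by the invariance argument above, $\int R_{\Psi_{U,V}(f)}(x)\,\diff\mu^2(U,V)=-\infty$ as well, so the identity holds in $\R\cup\{-\infty\}$.

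The main obstacle is precisely this integrability bookkeeping; the algebraic manipulation is immediate once one writes out both sides. A minor prerequisite that one should record before starting is that $(U,V,\lambda)\mapsto \ln|f(U\diag(\lambda)\exp(\diag(x))V^*)|$ is measurable, which is clear since it is the composition of the continuous function $(U,V,\lambda)\mapsto f(U\diag(\lambda)\exp(\diag(x))V^*)$ with $\ln|\cdot|$.
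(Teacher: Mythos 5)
Your proof is correct and follows essentially the same route as the paper: both arguments rest on inserting an average over $\T^n$ (using that diagonal unitary matrices lie in $\U(n)$), swapping the double integral, and appealing to the invariance of the Haar measure on $\U(n)$; the paper just runs the computation from $R_f$ outward while you run it from the right-hand side inward. Two small remarks: the change of variable $U\mapsto U\diag(\lambda)$ uses \emph{right}-invariance (harmless here since $\U(n)$ is compact hence unimodular, but call it by the right name), and the Tonelli bookkeeping you add to justify the interchange in $\R\cup\{-\infty\}$ is a genuine improvement over the paper, which performs the swap silently.
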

\begin{proof}
Take $\Omega \in \U(n)$. Substituting $U$ by $U \Omega$, by the change of variable formula, we have: 
\[
R_f(x) = \int_{\U(n) \times U(n)} \ln\abs{f(U\Omega\,\exp(\diag(x))\,V^*)} \, \diff\mu^2(U,V).
\]
Noting that any diagonal matrix whose coefficients are in the unit circle $\S^1$ is unitary, we can rewrite $R_f$ as:  
\begin{align*}
    R_f(x) & = \int_{\U(n)\times U(n)} \ln\abs{f(U\exp(\diag(x))V^*)} \, \diff\mu^2(U,V),\\
    & = \frac{1}{(2\pi)^n}\int_{\T^n}\int_{\U(n) \times \U(n)} \ln\abs{f(U\exp(\diag(\i\theta))\exp(\diag(x))V^*)} \, \diff\mu^2(U,V) \, d\theta,\\
    & = \int_{\U(n) \times \U(n)} \frac{1}{(2\pi)^n}\int_{\T^n} \ln\abs{f(U\exp(\diag(x + \i\theta))V^*)} \, d\theta \, \diff\mu^2(U,V),\\
    & = \int_{\U(n) \times \U(n)} R_{\Psi_{U,V}(f)}(x) \, \diff\mu^2(U,V).
\end{align*}
This finishes the proof.
\end{proof}
\begin{proposition}\label{PRO:Ronkin basic properties}
    \
    \begin{itemize}
        \item[(a)] $R_f$ is convex.
        \item[(b)] $R_f$ has real values or is identically equal to $-\infty$.
        \item[(c)] For all $f$, $g$, $R_{f + g} = R_f + R_g$.
        \item[(d)] $R_{\det}(x) = \mathbf{1} \cdot x$, where $\mathbf{1} = (1, \ldots, 1)$.
    \end{itemize}
\end{proposition}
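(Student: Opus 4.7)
The plan is to handle (a), (c), (d) almost directly and treat (b) as the only nontrivial point. Three of the four parts reduce either to a pointwise identity in the integrand or to the analogous statement for the classical Ronkin function via Proposition \ref{PRO:Expression Rf}, so my main effort will go into justifying the finiteness dichotomy in (b).

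For (a) I would invoke Proposition \ref{PRO:Expression Rf} to write $R_f(x) = \int_{\U(n)^2} R_{\Psi_{U,V}(f)}(x) \, \diff\mu^2(U,V)$. Each classical Ronkin function $R_{\Psi_{U,V}(f)}$ is convex in $x$ (this is the torus fact recalled at the start of Section \ref{SEC:Amoebas}), and the integral of a family of convex functions against a probability measure is convex; in the event that some of these functions equal $-\infty$ this remains valid in the extended sense. For (c), assuming the intended statement is $R_{fg} = R_f + R_g$ (the property that makes the Ronkin function the natural ``log'' of a regular function), the identity $\ln|fg| = \ln|f| + \ln|g|$ and linearity of the integral give the result immediately. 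For (d), I would simply compute: for unitary $U,V$,
\[
|\det(U\exp(\diag(x))V^*)| = |\det U|\cdot e^{x_1+\cdots+x_n}\cdot|\det V| = e^{\mathbf{1}\cdot x},
\]
so the integrand in the definition of $R_{\det}$ is the constant $\mathbf{1}\cdot x$, and integration against the probability measure $\mu^2$ yields $\mathbf{1}\cdot x$.

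The core of the work is (b). If $f \equiv 0$ then $R_f \equiv -\infty$ trivially, so assume $f \not\equiv 0$. By Proposition \ref{PRO:Sf = SPsiUVf}, $\Psi_{U,V}(f)$ is a nonzero Laurent polynomial for almost every $(U,V) \in \U(n)\times\U(n)$, hence $R_{\Psi_{U,V}(f)}(x) \in \R$ for such $(U,V)$ and every $x$. The remaining point, which I expect to be the main obstacle, is to show that the integrand $(U,V)\mapsto R_{\Psi_{U,V}(f)}(x)$ is $\mu^2$-integrable, so that $R_f(x) > -\infty$. Unraveling the definition and using Fubini, this amounts to integrability of
\[
(U,V,\theta)\mapsto \ln\bigl|f(U\exp(\diag(x+\i\theta))V^*)\bigr|
\]
over the compact set $\U(n)\times\U(n)\times\T^n$. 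I would argue this by noting two facts: first, this function is uniformly bounded above, because $f$ is continuous on the compact preimage $\sLog^{-1}(x)$ (compactness follows from properness of $\sLog$ recorded in the remark after Proposition \ref{PRO:sLog continuous}); and second, it is not identically $-\infty$, since $f$ restricted to this orbit is a nonzero real-analytic function of $(U,V,\theta)$. Combined with the local integrability of $\ln|h|$ for any nonzero holomorphic $h$ (the standard plurisubharmonic argument, applied to $h$ = $f$ composed with the smooth parametrization above), Fubini then gives a finite integral, proving $R_f(x)\in\R$ for all $x$. This establishes the dichotomy in (b).
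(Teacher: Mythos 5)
Parts (a), (c) (after the same typo correction to $R_{fg} = R_f + R_g$), and (d) match the paper's approach: (a) via Proposition~\ref{PRO:Expression Rf} and convexity of the classical Ronkin function, (c) and (d) by direct computation. Part (b), however, is where your proposal goes astray. The paper's proof of (b) is a single observation: since $R_f$ is convex by (a) and takes values in $\R \cup \{-\infty\}$, if $R_f(x_0) = -\infty$ for some $x_0$ then convexity forces $R_f \equiv -\infty$. Indeed, any $z$ lies strictly between $x_0$ and some $y$ with $R_f(y) < +\infty$, and $R_f(z) \le t R_f(x_0) + (1-t)R_f(y) = -\infty$. That is the entire content of (b); no integrability analysis is needed.

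You instead attempt to prove the strictly stronger statement that $f \not\equiv 0$ implies $R_f > -\infty$ everywhere, which the paper explicitly records as an open conjecture just after Proposition~\ref{PRO:Ronkin affine}. Your argument for it contains a concrete gap at the decisive step. You invoke ``local integrability of $\ln|h|$ for any nonzero holomorphic $h$ (the standard plurisubharmonic argument)'' applied to $f$ composed with the parametrization of $\U(n) \times \U(n) \times \T^n$. But $\U(n)$ is a real compact Lie group, so this composition is only real-analytic --- you say so yourself one sentence earlier, calling it ``a nonzero real-analytic function of $(U,V,\theta)$'' --- not holomorphic, and the plurisubharmonicity argument that gives $\ln|h| \in L^1_{\mathrm{loc}}$ for $h$ holomorphic on open subsets of $\C^k$ does not transfer to real-analytic functions on real manifolds. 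Local integrability of $\ln|h|$ for nonzero real-analytic $h$ requires Lojasiewicz-type estimates or resolution of singularities, a substantially harder ingredient whose applicability here is precisely what the open conjecture asks. The fix is simple: derive (b) directly from (a) via the one-line convexity argument above, and leave the finiteness of $R_f$ for $f \neq 0$ as the conjecture the paper states it to be.
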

\begin{proof}
The first part immediately follows from the Proposition \ref{PRO:Expression Rf} and the fact that the Ronkin function for Laurent polynomials is convex. The second part follows from the continuity of the Ronkin function (convex implies continuous). The two last parts follow from simple computation.
\end{proof}

\begin{proposition}
    $R_f$ is invariant under permutations of the coordinates.
\end{proposition}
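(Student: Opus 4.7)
The plan is to exploit the left- and right-invariance of the Haar measure on $\U(n)$, together with the fact that every permutation matrix is unitary.

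First I would fix $\sigma \in \mathcal{S}_n$ and let $P_\sigma \in \U(n)$ denote the associated permutation matrix, which satisfies $P_\sigma^* = P_\sigma^{-1}$. The key algebraic observation is that conjugation by $P_\sigma$ permutes the diagonal entries of a diagonal matrix, so that
\[
\diag(\sigma \cdot x) = P_\sigma\, \diag(x)\, P_\sigma^*,
\]
and therefore
\[
\exp(\diag(\sigma \cdot x)) = P_\sigma \exp(\diag(x)) P_\sigma^*.
\]

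Next I would substitute this identity into the defining integral of $R_f(\sigma \cdot x)$:
\[
R_f(\sigma \cdot x) = \int_{\U(n)}\int_{\U(n)} \ln\abs{f(U P_\sigma\, \exp(\diag(x))\, P_\sigma^* V^*)} \, d\mu(U)\, d\mu(V).
\]
Now I would perform the change of variables $U \mapsto U P_\sigma^{-1}$ and $V \mapsto V P_\sigma^{-1}$ (equivalently $V^* \mapsto P_\sigma V^*$). By the right-invariance of the Haar measure $\mu$ on $\U(n)$, both integrals are preserved, and the integrand becomes $\ln|f(U \exp(\diag(x)) V^*)|$, yielding $R_f(\sigma \cdot x) = R_f(x)$.

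There is no real obstacle here; the only minor point to watch is that the integral may take the value $-\infty$, but by Proposition \ref{PRO:Ronkin basic properties}(b) this happens simultaneously for all $x$ (the function is either everywhere real-valued or identically $-\infty$), so the change-of-variables argument is valid in either case.
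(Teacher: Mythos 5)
Your proof is correct and follows essentially the same route as the paper's: reduce to the identity $\diag(\sigma\cdot x) = P_\sigma\,\diag(x)\,P_\sigma^*$ and then use right-invariance of the Haar measure on $\U(n)$ to absorb the permutation matrices into the integration variables. The only cosmetic difference is that you shift by $P_\sigma^{-1}$ rather than $P_\sigma$, and you add a brief (and valid) remark about the possible value $-\infty$; neither changes the substance of the argument.
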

\begin{proof}
Let $\sigma \in \mathcal{S}_n$. We denote by $P_\sigma$ the permutation matrix associated with $\sigma$. It is in particular a unitary matrix. Recall that for all $z \in \C^n$, $\diag(\sigma \cdot z) = P_\sigma\diag(z)P_\sigma^*$. By the change of variable $U \mapsto UP_\sigma$ and $V \mapsto VP_\sigma$,
\begin{align*}
    R_{f}(\sigma \cdot x) & = \int_{\U(n) \times \U(n)} \ln\abs{U\exp(\diag(\sigma \cdot x))V^*} \, \diff\mu^2(U,V)\\
    & = \int_{\U(n) \times \U(n)} \ln\abs{UP_\sigma\exp(\diag(x))P_\sigma^*V^*} \, \diff\mu^2(U,V)\\
    & = \int_{\U(n) \times \U(n)} \ln\abs{U\exp(\diag(x))V^*} \, \diff\mu^2(U,V)\\
    & = R_f(x),
\end{align*}
which proves the proposition.
\end{proof}
Therefore, $R_f$ can be seen as a function of $\R^n/\mathcal{S}_n$. The classical Ronkin function is important in the study of amoebas of Laurent polynomials because it contains the information about where the connected components of the complement of the amoeba are. Namely, the Ronkin function is affine on each connected component of complement of an amoeba. We have an analogues result for matrix amoebas.

\begin{proposition}\label{PRO:Ronkin affine}
    The Ronkin function is affine on every connected component of $\sA(f)^C$. In particular, it is not identically equal to $-\infty$.
\end{proposition}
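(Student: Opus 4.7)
The plan is to reduce the statement to its classical torus counterpart via Propositions \ref{PRO:Union amoebas} and \ref{PRO:Expression Rf}. Fix a connected component $C$ of $\sA(f)^C$. By Proposition \ref{PRO:Union amoebas}, for every $(U,V) \in \U(n) \times \U(n)$ the set $C$ is contained in $\A(\Psi_{U,V}(f))^C$, and since $C$ is connected it must lie in a single connected component $C_{U,V}$ of $\A(\Psi_{U,V}(f))^C$. The classical Laurent polynomial theory recalled in Section 2.1 guarantees that $R_{\Psi_{U,V}(f)}$ is affine on $C_{U,V}$, hence on $C$. So, fixing $x_0 \in C$, for each $(U,V)$ there exists $a(U,V) \in \R^n$ with
\[
R_{\Psi_{U,V}(f)}(x) - R_{\Psi_{U,V}(f)}(x_0) = a(U,V) \cdot (x - x_0) \qquad \textrm{for all } x \in C.
\]

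Next, I would integrate this identity over $(U,V)$ with respect to $\mu^2$ and apply Proposition \ref{PRO:Expression Rf}, expecting to obtain $R_f(x) - R_f(x_0) = \bar a \cdot (x - x_0)$ on $C$ with $\bar a = \int_{\U(n)^2} a(U,V)\, \diff\mu^2$. This proves $R_f$ is affine on $C$, and the ``in particular'' statement follows immediately since an affine function on a non-empty open set cannot be identically $-\infty$.

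The main obstacle is justifying the commutation of integration with the linear form in $(x - x_0)$, which amounts to the measurability and integrability of $a(U,V)$. The cleanest route is to bound the integrand uniformly in $(U,V)$. For any $x' \in C$, properness of $\sLog$ (Lemma \ref{LEM:Norm sLog}) makes $\sLog^{-1}(x')$ a compact subset of $\GL_n(\C) \setminus \V(f)$, on which $\ln\abs{f}$ is bounded; consequently
\[
R_{\Psi_{U,V}(f)}(x') = \frac{1}{(2\pi)^n}\int_{\T^n} \ln\abs{f\bigl(U\diag(\e^{x'+\i\theta})V^*\bigr)}\, \diff\theta
\]
is bounded independently of $(U,V)$. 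Evaluating the displayed identity at $x' = x_0$ and $x' = x_0 \pm \varepsilon e_i$ (for $\varepsilon$ small enough that these points stay in $C$) expresses each component $a(U,V)\cdot e_i$ as a bounded difference of Ronkin values, so $a$ is uniformly bounded, hence integrable, and dominated convergence legitimates the swap. The convexity of $C$ (established in the previous subsection for hypersurfaces) ensures that an identity valid along all line segments through $x_0$ in $C$ really gives affinity on the whole component.
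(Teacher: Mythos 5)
Your proof is correct, but it takes a genuinely different route from the paper's. After the common reduction — fixing $x$ in a component $C$, noting $C \subset \A(\Psi_{U,V}(f))^C$ for all $(U,V)$, and invoking the classical fact that $R_{\Psi_{U,V}(f)}$ is affine there — the paper observes that $\nabla R_{\Psi_{U,V}(f)}(x)$ is a continuous function of $(U,V)$ taking values in the discrete set $\Z^n$ (this is the classical order of the component), hence is constant on the connected space $\U(n) \times \U(n)$; a second discreteness argument makes it constant in $x$ as well, so $R_f$ has this common gradient $m \in \Z^n$. You instead integrate the affine identity $R_{\Psi_{U,V}(f)}(x) - R_{\Psi_{U,V}(f)}(x_0) = a(U,V)\cdot(x-x_0)$ directly, and justify the integrability of $a(U,V)$ by the uniform bound on $R_{\Psi_{U,V}(f)}(x')$ coming from properness of $\sLog$. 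Your argument is more elementary in that it never uses the integrality of the classical order vector, and it makes explicit the boundedness step that the paper's proof only uses implicitly when concluding $R_f > -\infty$. On the other hand, the paper's route buys more: it shows the gradient $\nabla R_f$ lies in $\Z^n$, which is exactly what is needed to make sense of the order map $\nu_f$ in Definition \ref{DEF:nuf}; your $\bar a = \int a(U,V)\,\diff\mu^2$ is a priori only a real vector, and you would need a separate argument (essentially the paper's discreteness argument) to recover $\bar a \in \Z^n$. Two small remarks: measurability of $(U,V) \mapsto a(U,V)$ follows from continuity (each finite difference of Ronkin values is continuous in $(U,V)$ by dominated convergence), which you should say explicitly; and the final appeal to convexity of $C$ is unnecessary, since the affine identity already holds for all $x \in C \subset C_{U,V}$, not merely along segments.
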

\begin{proof}
Let $C \subset \A(f)^C$ be a connected component and let $x \in C$. It means that for all unitary matrices $U,V$, $x \notin \A(\Psi_{U,V}(f))$ so $R_{\Psi_{U,V}(f)}$ is affine (thus smooth). It is clear that $U,V \mapsto \nabla R_{\Psi_{U,V}(f)}(x)$ is continuous. Moreover, this function takes its values in $\Z^n$ so it is actually constant. Let $m(x) \in \Z^n$ its value. Still by an argument of discreteness/continuity, we deduce that $m(x) = m$ is constant when $x$ browses $C$. Therefore, $R_f$ is affine over $C$ and its gradient is $m \in \Z^n$.
\end{proof}
Proposition \ref{PRO:Ronkin basic properties}(b) and Proposition \ref{PRO:Ronkin affine} imply that if $\sA(f) \neq \R^n$, $R_f > -\infty$ over the whole space. We remark that the classical Ronkin function for Laurent polynomials has finite values for every non-zero polynomial. We conjecture that if $f \neq 0$, $R_f > -\infty$.

In the next subsection, we will see more advanced results that will give us information about the Ronkin function. 

\subsection{The order function and its image}
Proposition \ref{PRO:Ronkin affine} allows us to define the order function $\nu_f$ for $f \in \C[\GL_n]$.
\begin{definition}\label{DEF:nuf}
    We define the \emph{order function} by:
    \[
    \nu_f : \fonction{\Gamma(\sA(f)^C)}{\Z^n}{C}{\nabla R_f(x) \textrm{ for some $x \in C$}}.
    \]
    We call $
    nu_f(C)$, the \emph{order of the connected component} $C$.
\end{definition}
The proof of the proposition \ref{PRO:Ronkin affine} tells us that for all unitary matrices $U,V$ and for all $x \in C \subset \sA(f)^C$ with $C$ connected, $\nabla R_f(x) = \nabla R_{\Psi_{U,V}(f)}(x)$, which will be very useful to determine the order of the connected components of $\sA(f)^C$.
\begin{proposition}\label{PRO:Injectivity nuf}
    $\nu_f$ is injective.
\end{proposition}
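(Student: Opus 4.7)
The plan is to reduce the statement to the classical injectivity of the order function for amoebas of Laurent polynomials and then use convexity to glue components.

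First I would argue that we may assume $\sA(f) \neq \R^n$, since otherwise the complement is empty and there is nothing to prove. Under this assumption, for every pair $(U,V) \in \U(n) \times \U(n)$, the Laurent polynomial $\Psi_{U,V}(f)$ must be nonzero (otherwise $\A(\Psi_{U,V}(f)) = \R^n$ would contribute everything to $\sA(f)$ via Proposition \ref{PRO:Union amoebas}). So every classical complement $\A(\Psi_{U,V}(f))^C$ is a non-empty open set whose connected components are convex, and on each such component the classical order function $\nu_{\Psi_{U,V}(f)}$ is well-defined and injective.

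Next, I would set up the comparison between the matrix order and the classical orders. Fix a connected component $C$ of $\sA(f)^C$. Since $\sA(f)^C \subset \A(\Psi_{U,V}(f))^C$ for every $(U,V)$ and $C$ is connected, $C$ lies inside a single connected component $C(U,V)$ of $\A(\Psi_{U,V}(f))^C$. The remark following Proposition \ref{PRO:Ronkin affine} states that for all $(U,V)$ and all $x \in C$, $\nabla R_f(x) = \nabla R_{\Psi_{U,V}(f)}(x)$. Therefore $\nu_f(C) = \nu_{\Psi_{U,V}(f)}(C(U,V))$ for every pair $(U,V)$.

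Now I would proceed by contradiction. Suppose $C_1 \neq C_2$ are two distinct connected components of $\sA(f)^C$ with $\nu_f(C_1) = \nu_f(C_2) = m$. Then for every $(U,V)$ the classical orders of $C_1(U,V)$ and $C_2(U,V)$ both equal $m$, so by injectivity of the classical order function they coincide as one component $D(U,V)$ of $\A(\Psi_{U,V}(f))^C$. Since $D(U,V)$ is convex and contains $C_1 \cup C_2$, it contains the convex hull $\Conv(C_1 \cup C_2)$. Taking the intersection over all $(U,V)$,
\[
\Conv(C_1 \cup C_2) \;\subset\; \bigcap_{(U,V) \in \U(n) \times \U(n)} D(U,V) \;\subset\; \bigcap_{(U,V)} \A(\Psi_{U,V}(f))^C \;=\; \sA(f)^C,
\]
where the last equality uses Proposition \ref{PRO:Union amoebas}. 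Since $\Conv(C_1 \cup C_2)$ is convex, hence connected, it lies in a single connected component of $\sA(f)^C$; but it meets both $C_1$ and $C_2$, forcing $C_1 = C_2$, a contradiction.

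The only delicate point is the identification $\nu_f(C) = \nu_{\Psi_{U,V}(f)}(C(U,V))$ for every unitary pair (not just generic ones), but this is precisely what the proof of Proposition \ref{PRO:Ronkin affine} supplies; the rest is formal manipulation of convex sets. No further technical obstacle is expected.
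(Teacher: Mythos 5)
Your proof is correct and follows essentially the same strategy as the paper's: compare $\nu_f$ to the classical order functions $\nu_{\Psi_{U,V}(f)}$, conclude that $\Conv(C_1 \cup C_2)$ lies in each $\A(\Psi_{U,V}(f))^C$, and intersect over all $(U,V)$ to land in $\sA(f)^C$. The only cosmetic difference is that you cite the classical injectivity of $\nu_{\Psi_{U,V}(f)}$ and the convexity of the complement components, while the paper re-derives the same step directly from the convexity of $R_{\Psi_{U,V}(f)}$ (constant gradient on $C_1$ and $C_2$ forces affine behavior on the convex hull).
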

\begin{proof}
Let $C_1$ and $C_2$ be connected components of $\sA(f)^C$ such that $\nu_f(C_1) = \nu_f(C_2)$. We call $m$ this quantity. It implies that for all unitary matrices $U,V$, $\nu_{\Psi_{U,V}(f)}(C_1) = \nu_{\Psi_{U,V}(f)}(C_2)$. As $R_{\Psi_{U,V}(f)}$ is convex and its gradient is $m$ over $C_1$ and $C_2$, we deduce that $\nabla R_{\Psi_{U,V}(f)} = m$ over $C_3 = \Conv(C_1 \cup C_2)$ which is open. Therefore, any point $x \in C_3$ is in $\A(\Psi_{U,V}(f))^C$ because $R_{\Psi_{U,V}(f)}$ is affine around $x$. It is true for all $U,V$ so $C_3 \subset \sA(f)^C$. We deduce that $C_1 = C_2 = C_3$. $\nu_f$ is injective.
\end{proof}
\begin{proposition}\label{PRO:Image nuf}
    The image of $\nu_f$ is included in $\d \bigcap_{(U,V) \in \U(n) \times \U(n)} \Newt(\Psi_{U,V}(f)) \cap \Z^n$.
\end{proposition}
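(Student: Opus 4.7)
The plan is to reduce to the classical statement (recalled at the start of this section) that for a nonzero Laurent polynomial $g$ one has $\Img(\nu_g) \subset \Newt(g)\cap\Z^n$, and then to transport this inclusion through the decomposition $\sA(f)^C = \bigcap_{(U,V)}\A(\Psi_{U,V}(f))^C$ (complement of Proposition \ref{PRO:Union amoebas}) together with the integral formula of Proposition \ref{PRO:Expression Rf}.

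Concretely, I would fix a connected component $C$ of $\sA(f)^C$, pick $x \in C$, and set $m = \nu_f(C) = \nabla R_f(x)$. For every pair $(U,V) \in \U(n)\times\U(n)$ the point $x$ lies outside $\A(\Psi_{U,V}(f))$; in particular $\Psi_{U,V}(f)$ is not identically zero, so the classical theory applies on the connected component of $\A(\Psi_{U,V}(f))^C$ containing $x$ and yields
\[
\nabla R_{\Psi_{U,V}(f)}(x) \;\in\; \Newt(\Psi_{U,V}(f))\cap\Z^n.
\]
It therefore suffices to show that this gradient equals $m$ for every $(U,V)$; intersecting over all unitary pairs then gives the desired inclusion.

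To identify the gradients I would reuse the discreteness argument from the proof of Proposition \ref{PRO:Ronkin affine}: the map $(U,V)\mapsto \nabla R_{\Psi_{U,V}(f)}(x)$ is continuous, since the coefficients of $\Psi_{U,V}(f)$ depend continuously on $(U,V)$ and the gradient of the classical Ronkin function at a point outside the amoeba is a continuous functional of the coefficients of the polynomial, and it takes values in the discrete set $\Z^n$, hence is constant on the connected manifold $\U(n)\times\U(n)$. Pick a small open ball $B\subset C$ around $x$; each $R_{\Psi_{U,V}(f)}$ is affine on $B$ because $B$ sits in a single connected component of $\A(\Psi_{U,V}(f))^C$. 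Differentiating the identity of Proposition \ref{PRO:Expression Rf} on $B$ (which amounts to integrating a locally constant $\Z^n$-valued function against the Haar measure) then gives $\nabla R_f(x) = \nabla R_{\Psi_{U,V}(f)}(x) = m$, completing the argument.

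The only technical point is the interchange of $\nabla$ and $\int_{\U(n)\times\U(n)}$ in Proposition \ref{PRO:Expression Rf}; it is cleanly handled by the local affineness observed above, so I do not anticipate a real obstacle. The proof is essentially a packaging of facts already assembled in this subsection.
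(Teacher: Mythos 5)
Your proof is correct and takes essentially the same route as the paper's: it reuses the discreteness/continuity identification $\nabla R_f(x) = \nabla R_{\Psi_{U,V}(f)}(x)$ established in the proof of Proposition \ref{PRO:Ronkin affine}, and then applies the classical containment $\Img(\nu_g) \subset \Newt(g)\cap\Z^n$ to each $\Psi_{U,V}(f)$ before intersecting over unitary pairs. The only difference is that you spell out the interchange of $\nabla$ with the integral from Proposition \ref{PRO:Expression Rf}, a step the paper leaves implicit.
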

\begin{proof}
We proved that for all unitary $U,V$, $\nu_f(C) \in \Img(\nu_{\Psi_{U,V}(f)}) \subset \Newt(\Psi_{U,V}(f)) \cap \Z^n$ so $\Img(\nu_f)$ is included in $\d \bigcap_{(U,V) \in \U(n) \times \U(n)} \Newt(\Psi_{U,V}(f)) \cap \Z^n$.
\end{proof}
Knowing which of the $Q_m$ vanish on $\U(n) \times \U(n)$ is useful to eliminate quickly some points of $S_f$ that are not in $\Img(\nu_f)$. In fact, contrary to the case of Laurent polynomials where all the vertices $v$ of the Newton polytope have an associated connected component of order $v$, at most two of the vertices of $\sNewt(f)$ can have an associated component. We want to use the lemma \ref{LEM:Homogeneity} to prove that when $m \notin \Z\mathbf{1}$ is a vertex, $m \notin \Img(\nu_f)$. For this, we need some properties on the $Q_m$.
\begin{proposition}\label{PRO:Properties of the Qm}
    For all $m \in \Z^n$, if $\lambda \in T$, for all invertible matrices $A$ and $B$,
    \[
    Q_m(A\diag(\lambda),B) = \lambda^mQ_m(A,B), \qquad Q_m(A,B\diag(\lambda)) = \lambda^{-m}Q_m(A,B)
    \]
    and for all permutation matrix $P$, $Q_m(AP,BP) = Q_{Pm}(A,B)$.
\end{proposition}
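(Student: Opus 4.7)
All three identities follow from the same strategy: substitute into the defining generating series
\[
f(A\,\diag(z)\,B^{-1}) = \sum_{m \in \Z^n} Q_m(A,B)\, z^m,
\]
and then identify Laurent coefficients in $z$. The key algebraic facts I will use repeatedly are that $\diag(\lambda)\diag(z) = \diag(\lambda z)$ (componentwise product), that $(B\diag(\lambda))^{-1} = \diag(\lambda^{-1})B^{-1}$, and that a permutation matrix $P$ conjugates a diagonal matrix to another diagonal matrix whose entries are permuted: $P\,\diag(z)\,P^{-1} = \diag(P\cdot z)$, where $P\cdot z$ denotes the permutation action on $z$. Each identity then reduces to matching two power series in $z$ that agree on an open subset of $T$.

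For the first identity, I would replace $A$ by $A\,\diag(\lambda)$ and compute
\[
f(A\,\diag(\lambda)\,\diag(z)\,B^{-1}) = f(A\,\diag(\lambda z)\,B^{-1}) = \sum_m Q_m(A,B)(\lambda z)^m = \sum_m \lambda^m Q_m(A,B)\, z^m.
\]
Comparing with $\sum_m Q_m(A\,\diag(\lambda),B)\,z^m$ gives $Q_m(A\,\diag(\lambda),B) = \lambda^m Q_m(A,B)$. The second identity is symmetric: replacing $B$ by $B\,\diag(\lambda)$ and using $(B\,\diag(\lambda))^{-1} = \diag(\lambda^{-1})B^{-1}$ turns the argument of $f$ into $A\,\diag(\lambda^{-1}z)\,B^{-1}$, and the same coefficient comparison gives the factor $\lambda^{-m}$.

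The third identity requires slightly more care with indices. Writing $(BP)^{-1} = P^{-1}B^{-1}$, we get
\[
AP\,\diag(z)\,(BP)^{-1} = A\,(P\,\diag(z)\,P^{-1})\,B^{-1} = A\,\diag(P\cdot z)\,B^{-1},
\]
so
\[
\sum_{m} Q_m(AP,BP)\,z^m = f(A\,\diag(P\cdot z)\,B^{-1}) = \sum_m Q_m(A,B)\,(P\cdot z)^m.
\]
The only thing left is to observe that $(P\cdot z)^m = z^{P^{-1}m}$ (a direct check from the definition of the permutation action), so reindexing $m \mapsto Pm$ on the right-hand side yields $\sum_{m} Q_{Pm}(A,B)\,z^m$, and comparing coefficients gives $Q_m(AP,BP) = Q_{Pm}(A,B)$. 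The main ``obstacle'' is really bookkeeping: fixing a convention for how $P$ acts on $\Z^n$ and on $\diag(z)$ consistently so that the exponent transformation comes out as $Pm$ rather than $P^{-1}m$; once the convention is fixed, everything is formal.
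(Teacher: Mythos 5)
Your proof is correct and takes essentially the same approach as the paper: substitute $A\mapsto A\,\diag(\lambda)$ (resp. $B\mapsto B\,\diag(\lambda)$, resp. $A\mapsto AP$, $B\mapsto BP$) into the defining generating series $f(A\,\diag(z)\,B^{-1}) = \sum_m Q_m(A,B)\,z^m$, simplify the argument of $f$ using the diagonal/permutation identities, and compare Laurent coefficients in $z$, with the same reindexing $m\mapsto Pm$ for the third identity.
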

\begin{proof}
For any invertible $A,B$ and any $z \in T$ and any $\lambda \in T$,
\begin{align*}
    \sum_{m \in \Z^n} Q_m(A\diag(\lambda),B^{-1})z^m & = f(A\diag(\lambda)\diag(z)B)\\
    & = f(A\diag(\lambda_1z_1,\ldots,\lambda_nz_n)B^{-1})\\
    & = \sum_{m \in \Z^n} Q_m(A,B)\lambda^mz^m,
\end{align*}
so by uniqueness of the $Q_m$, for every $m \in \Z^n$, $Q_m(A\diag(\lambda),B) = \lambda^mQ_m(A,B)$. Same thing for $B$.

And if $\sigma \in \mathcal{S}_n$, let $P_\sigma$ be the associated permutation matrix. We have,
\begin{align*}
    \sum_{m \in \Z^n} Q_m(AP_\sigma,BP_\sigma)z^m & = f(AP_\sigma\diag(z)P_\sigma^{-1}B^{-1})\\
    & = f(A\diag(\sigma \cdot z)B^{-1})\\
    & = \sum_{m \in \Z^n} Q_m(A,B)(\sigma \cdot z)^m\\
    & = \sum_{m \in \Z^n} Q_{P_\sigma m}(A,B)(\sigma \cdot z)^{P_\sigma m}\\
    & = \sum_{m \in \Z^n} Q_{P_\sigma m}(A,B)z^m,
\end{align*}
so for every $m \in \Z^n$, $Q_m(AP_\sigma,BP_\sigma) = Q_{P_\sigma m}(A,B)$.
\end{proof}
\begin{corollary}
    $S_f$ and $\sNewt(f)$ are permutation invariant.
\end{corollary}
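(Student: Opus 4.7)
The plan is to read off permutation invariance of $S_f$ directly from the last identity of Proposition \ref{PRO:Properties of the Qm}, and then deduce the statement for $\sNewt(f)$ by noting that permutations act linearly and convex hull commutes with linear maps.

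More precisely, fix $\sigma \in \mathcal{S}_n$ with associated permutation matrix $P_\sigma$. The identity $Q_m(AP_\sigma, BP_\sigma) = Q_{P_\sigma m}(A,B)$ valid for all invertible $A, B$ means that the regular function $Q_{P_\sigma m}$ on $\GL_n(\C) \times \GL_n(\C)$ is the pullback of $Q_m$ under the biregular automorphism $(A,B) \mapsto (AP_\sigma, BP_\sigma)$. In particular, $Q_{P_\sigma m}$ is identically zero if and only if $Q_m$ is identically zero. Translating back to the definition of the support, this reads $P_\sigma m \in S_f \iff m \in S_f$, which is exactly the statement that $S_f$ is invariant under the action of $\mathcal{S}_n$ on $\Z^n$ by permutation of coordinates.

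For the matrix Newton polytope, recall that $\sNewt(f) = \Conv(S_f)$. Since the action of $\mathcal{S}_n$ on $\R^n$ is by linear (indeed orthogonal) maps, it commutes with convex hulls: $P_\sigma \cdot \Conv(S_f) = \Conv(P_\sigma \cdot S_f) = \Conv(S_f)$, where the last equality uses the permutation invariance of $S_f$ established above. Hence $\sNewt(f)$ is permutation invariant as well.

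There is no real obstacle here; the work was done in Proposition \ref{PRO:Properties of the Qm}, and the corollary is essentially a direct translation. The only point to emphasize in the write-up is that $(A,B) \mapsto (AP_\sigma, BP_\sigma)$ is a bijection of $\GL_n(\C) \times \GL_n(\C)$, which is what allows one to pass from the pointwise identity $Q_m(AP_\sigma, BP_\sigma) = Q_{P_\sigma m}(A,B)$ to the equivalence of $Q_m$ and $Q_{P_\sigma m}$ being identically zero.
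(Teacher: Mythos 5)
Your proof is correct and is exactly the argument the paper leaves implicit: permutation invariance of $S_f$ follows from the identity $Q_m(AP_\sigma,BP_\sigma) = Q_{P_\sigma m}(A,B)$ of Proposition \ref{PRO:Properties of the Qm} together with the fact that $(A,B)\mapsto(AP_\sigma,BP_\sigma)$ is a bijection of $\GL_n(\C)^2$, and then $\sNewt(f)=\Conv(S_f)$ inherits the invariance since convex hull commutes with the linear action of $\mathcal{S}_n$. Nothing is missing.
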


\subsection{Geometry of hypersurfaces matrix amoebas}
In this section we study the shape of matrix amoebas of hypersurfaces, and in particular, the connected components of their complements and their maximal cones. Thanks to Proposition \ref{PRO:Injectivity nuf} ($\nu_f$ is defined at definition \ref{DEF:nuf}), we know that any connected component of $\sA(f)^C$ is associated with exactly one point of $\sNewt(f) \cap \Z^n$. We will treat vertices of $\sNewt(f)$ separately from the other points.

Recall that when $\Delta \subset \R^n$ is a convex polytope and $F$ is a face of $\Delta$, the normal cone associated to $F$ is $C_F(\Delta)$ (or $C_F$ when there is no ambiguity) defined as $\{w \in \R^n|\forall x \in F, \forall y \in \Delta, w \cdot x \leq w \cdot y\}$, which is a cone (a rational one if $\Delta$ is rational). See \cite[Section 2.3]{MacLagan} for more details about convex geometry. Notice that if $F$ has dimension $d$, $C_F$ has dimension $n - d$.
\begin{proposition}\label{PRO:Maximal cone}
    For all connected component $E \subset \sA(f)^C$, if we set $m = \nu_f(E) \in \sNewt(f) \cap \Z^n$ and $F$ the smallest face of $\sNewt(f)$ that contains $m$ (\textit{i.e.} the only face whose $m$ belongs to the relative interior of), then $C_F$ is the recession cone of $E$, which means that,
    \begin{itemize}
        \item[(a)] $\forall x \in E, x + C_F \subset E$.
        \item[(b)] $\forall$ cone $C, C_F \subsetneq C \Rightarrow \forall x \in \R^n, x + C \not\subset E$.
    \end{itemize}
\end{proposition}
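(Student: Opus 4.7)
The plan is to reduce the statement to the classical fact about amoebas of Laurent polynomials via the decomposition $\sA(f) = \bigcup_{(U,V) \in \U(n) \times \U(n)} \A(\Psi_{U,V}(f))$ of Proposition \ref{PRO:Union amoebas}. The classical input I invoke (going back to Forsberg, Passare and Tsikh) asserts that for a Laurent polynomial $g$ and a connected component $E'$ of $\A(g)^C$ whose order $\nu_g(E')$ lies in the relative interior of a face $F'$ of $\Newt(g)$, the recession cone of $E'$ is exactly the normal cone $C_{F'}$ of $\Newt(g)$ at $F'$. I apply this to each $\Psi_{U,V}(f)$ and then combine the conclusions.

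For part (a), fix $x \in E$. For every unitary pair $(U,V)$ the point $x$ lies in a unique connected component $E_{U,V}$ of $\A(\Psi_{U,V}(f))^C$, and the observation recorded in the proof of Proposition \ref{PRO:Ronkin affine} gives $\nu_{\Psi_{U,V}(f)}(E_{U,V}) = \nabla R_{\Psi_{U,V}(f)}(x) = \nabla R_f(x) = m$. Let $F_{U,V}$ be the smallest face of $\Newt(\Psi_{U,V}(f))$ containing $m$ in its relative interior. Using the standard characterization
\[
C_F = \{w \in \R^n : w \cdot m \leq w \cdot y \text{ for all } y \in \sNewt(f)\}
\]
(valid because $m$ is interior to $F$), and the same characterization for $C_{F_{U,V}}$ with $\sNewt(f)$ replaced by $\Newt(\Psi_{U,V}(f))$, the inclusion $\Newt(\Psi_{U,V}(f)) \subset \sNewt(f)$ immediately yields $C_F \subset C_{F_{U,V}}$. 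The classical recession cone result then gives $x + C_F \subset x + C_{F_{U,V}} \subset E_{U,V} \subset \A(\Psi_{U,V}(f))^C$. Taking complements in Proposition \ref{PRO:Union amoebas}, the inclusion $x + C_F \subset \A(\Psi_{U,V}(f))^C$ for every $(U,V)$ becomes $x + C_F \subset \sA(f)^C$; since $x + C_F$ is connected and meets $E$, it lies in $E$.

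For part (b), fix a cone $C$ with $C_F \subsetneq C$. If $x \notin E$ then $x \in (x + C) \setminus E$ already, so assume $x \in E$. Proposition \ref{PRO:Sf = SPsiUVf} produces a pair $(U_0, V_0)$ with $\Newt(\Psi_{U_0,V_0}(f)) = \sNewt(f)$; for such a pair $F_{U_0,V_0} = F$ and $C_{F_{U_0,V_0}} = C_F$. Since $C_F$ is the recession cone of the convex open set $E_{U_0,V_0}$ and $C_F \subsetneq C$, any $w \in C \setminus C_F$ produces a ray $\{x + tw : t \geq 0\} \subset x + C$ that eventually leaves $E_{U_0,V_0}$, crossing $\A(\Psi_{U_0,V_0}(f)) \subset \sA(f)$. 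This yields a point of $x + C$ lying outside $\sA(f)^C$, hence outside $E$. The main obstacle is the reliance on the classical recession cone theorem for Laurent polynomials: without it, one cannot extract sharp directional information from the Ronkin function alone. The genericity step in (b), handled by Proposition \ref{PRO:Sf = SPsiUVf}, is likewise essential, since for non-generic $(U,V)$ the cone $C_{F_{U,V}}$ may be strictly larger than $C_F$ and part (a) alone would not give sharpness.
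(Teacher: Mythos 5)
Your proof is correct and follows essentially the same route as the paper: reduce to Forsberg's recession-cone result for Laurent amoebas via the decomposition of Proposition \ref{PRO:Union amoebas}, use $\Newt(\Psi_{U,V}(f)) \subset \sNewt(f)$ to get $C_F \subset C_{F_{U,V}}$ for part (a), and pick a generic $(U_0,V_0)$ with $\Newt(\Psi_{U_0,V_0}(f)) = \sNewt(f)$ for the sharpness in part (b). The only cosmetic difference is that you cite Proposition \ref{PRO:Sf = SPsiUVf} for the existence of that generic pair, which is the more direct reference (the paper cites Proposition \ref{PRO:Equivalence Newton polytopes}).
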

\begin{proof}
To prove the proposition, we need its Laurent polynomial counterpart that can be found in \cite[Proposition 2.6]{Forsberg} 
Let $x \in E$ and let $E_{U,V}$ be the connected component of $\A(\Psi_{U,V}(f))^C$ where $x$ belongs to. We know that for all $U,V$, $\nu_{\Psi_{U,V}(f)}(E_{U,V}) = \nu_f(E) = m$ (in particular, $m$ belongs to all the $\Newt(\Psi_{U,V}(f))$) and $E$ is the intersection of the $E_{U,V}$. By \cite[Proposition 2.6]{Forsberg}, for every unitary $U$, $V$, we have $x + C_{F_{U,V}}(\Newt(\Psi_{U,V}(f))) \subset E_{U,V}$ where $F_{U,V}$ is the smallest face of $\Newt(\Psi_{U,V}(f))$ that contains $m$. As $\Newt(\Psi_{U,V}(f)) \subset \sNewt(f)$, $C_F = C_F(\sNewt(f)) \subset C_{F_{U,V}}(\Newt(\Psi_{U,V}(f)))$, we see that $x + C_F \subset E_{U,V}$. It is true for every $U,V$ so $x + C_F \subset E$. Conversely,suppose $C$ strictly contains $C_F$. Consider unitary $U_0$ ,$ V_0$ such that $\Newt(\Psi_{U,V}(f)) = \sNewt(f)$ (we know there exists at least one by Proposition \ref{PRO:Equivalence Newton polytopes}). We have, still by \cite[Proposition 2.6]{Forsberg}, that for all $x \in \R^n$, $x + C \not\subset E_{U_0,V_0} \supset E$. It proves the proposition.
\end{proof}
\begin{corollary}
    It implies that the bounded components are exactly the ones whose order is an interior point of $\sNewt(f)$ because the only face $F$ such that $C_F$ is bounded is $F = \sNewt(f)$.
\end{corollary}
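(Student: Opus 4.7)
The plan is to derive this corollary directly from Proposition \ref{PRO:Maximal cone} together with a standard fact about convex sets. First I would recall that a nonempty closed convex set is bounded if and only if its recession cone is $\{0\}$. Since the connected components of $\sA(f)^C$ are convex (by the earlier proposition in Section \ref{SEC:Definitions}), a component $E$ is bounded if and only if its recession cone is trivial. But by Proposition \ref{PRO:Maximal cone}, the recession cone of $E$ is exactly $C_F(\sNewt(f))$, where $F$ is the smallest face of $\sNewt(f)$ containing $m = \nu_f(E)$. Thus $E$ is bounded iff $C_F = \{0\}$.

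Next, I would translate the condition $C_F = \{0\}$ into a condition on $F$. For a full-dimensional polytope $P \subset \R^n$ and a face $F$ of $P$ one has $\dim C_F(P) = n - \dim F$, so $C_F(P) = \{0\}$ if and only if $\dim F = n$, i.e.\ $F = P$. Under the tacit assumption that $\sNewt(f)$ is full-dimensional, we therefore get $C_F = \{0\}$ iff $F = \sNewt(f)$ (and if $\sNewt(f)$ were not full-dimensional, $C_F$ would contain the nonzero orthogonal complement of its affine hull, so no component could be bounded — and there would also be no interior point, so the corollary would hold vacuously).

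Finally, by the very definition of $F$ as the smallest face of $\sNewt(f)$ containing $m$, the equality $F = \sNewt(f)$ is equivalent to $m$ belonging to the relative interior of $\sNewt(f)$. Combining these three observations gives: $E$ bounded $\iff C_F = \{0\} \iff F = \sNewt(f) \iff \nu_f(E)$ is an interior point of $\sNewt(f)$, which is the claim.

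No real obstacle is expected; essentially everything has been set up by Proposition \ref{PRO:Maximal cone}. The only point that deserves a bit of care is the interpretation of "interior" when $\sNewt(f)$ is not full-dimensional — the argument above shows the statement is best read with "interior" meaning "relative interior", and in that case the equivalence is clean.
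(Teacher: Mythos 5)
Your argument is correct and follows exactly the same route as the paper: Proposition \ref{PRO:Maximal cone} identifies $C_F(\sNewt(f))$ as the recession cone of the component $E$, a convex set is bounded iff its recession cone is trivial, and $C_F = \{0\}$ iff $F = \sNewt(f)$ iff $\nu_f(E)$ lies in the interior. The only difference is that you explicitly flag the case where $\sNewt(f)$ fails to be full-dimensional (in which the statement holds vacuously), a point the paper leaves implicit.
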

\begin{proposition}\label{PRO:Image nu, vertices}
    If $v \in \sNewt(f)$ is a vertex, there are two possibilities:
    \begin{itemize}
        \item[(a)] $v \in \Z\mathbf{1}$. In this case, $v \in \Img(\nu_f)$. Moreover, $\mathbf{1}$ or $-\mathbf{1}$ belongs in $C_{\{v\}}$. In particular, $E$ is unbounded.
        \item[(b)] $v \notin \Z\mathbf{1}$. In this case, $v \notin \Img(\nu_f)$.
    \end{itemize}
\end{proposition}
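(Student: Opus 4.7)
First I would reduce both parts to the case $v=0$ by replacing $f$ with $f\cdot\det^{-k}$: this does not change $\sA(f)$ since $\det$ is nonvanishing on $\GL_n(\C)$, and by Proposition \ref{PRO:Ronkin basic properties}(c)--(d) it shifts the Ronkin function by $-k\,\mathbf{1}\cdot x$, hence shifts $\Img(\nu_f)$ and $\sNewt(f)$ by $-k\mathbf{1}$. So I may assume $v=0$ is a vertex of $\sNewt(f)$.

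For the normal-cone statement in (a) I would symmetrize. The Corollary to Proposition \ref{PRO:Properties of the Qm} gives that $\sNewt(f)$ is $\mathcal{S}_n$-invariant, so $\sNewt(f)\cap\R\mathbf{1}=[a\mathbf{1},b\mathbf{1}]$ is a symmetric interval containing $0$; since $0$ is a vertex it must be extreme in this interval, i.e.\ $a=0$ or $b=0$. For any $y\in\sNewt(f)$, the orbit average $\bar y=\tfrac{1}{n!}\sum_{\sigma\in\mathcal{S}_n}\sigma y$ lies on the diagonal and inside the polytope, so $\bar y\in[a\mathbf{1},b\mathbf{1}]$; taking inner product with $\mathbf{1}$ yields $\mathbf{1}\cdot y=n(\bar y)_1\in[na,nb]$. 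The extremality of $0$ then gives $\mathbf{1}\cdot y\geq0$ for all $y$ (case $a=0$) or $\mathbf{1}\cdot y\leq0$ for all $y$ (case $b=0$), that is, $\mathbf{1}\in C_{\{v\}}$ or $-\mathbf{1}\in C_{\{v\}}$. Unboundedness of any component of order $v$ then follows from Proposition \ref{PRO:Maximal cone}.

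For the existence part of (a), namely $0\in\Img(\nu_f)$, I would combine the classical Forsberg--Passare--Tsikh result with Propositions \ref{PRO:Union amoebas} and \ref{PRO:Expression Rf}. For almost every $(U,V)\in\U(n)\times\U(n)$, Proposition \ref{PRO:Sf = SPsiUVf} ensures $0$ is a vertex of $\Newt(\Psi_{U,V}(f))$, yielding a classical complement-component of $\A(\Psi_{U,V}(f))^C$ of order $0$ whose recession cone extends along $\pm\mathbf{1}$. Expanding $f(e^{-T}UV^*)=\sum_{m\in S_f}Q_m(U,V)\,e^{-T\mathbf{1}\cdot m}$ and using that $\mathbf{1}\cdot m>0$ for $m\in S_f\setminus\{0\}$, one sees $f(e^{-T}UV^*)\to Q_0(U,V)$ as $T\to+\infty$. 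Integrating over $(U,V)$ yields $R_f(-T\mathbf{1})\to\int\ln|Q_0(U,V)|\,d\mu^2(U,V)$, finite, and taking gradients under the integral (using $\nabla R_{\Psi_{U,V}(f)}(-T\mathbf{1})=0$ on the classical order-$0$ component for generic $(U,V)$ and $T$ large) gives $\nabla R_f(-T\mathbf{1})=0$. A suitable point along the ray $-T\mathbf{1}$ thus lies in a component of $\sA(f)^C$ on which $\nabla R_f=0$, proving $0\in\Img(\nu_f)$.

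For part (b) I would argue by contradiction. Suppose $v\notin\Z\mathbf{1}$ is a vertex with $v\in\Img(\nu_f)$. The plan is to invoke Lemma \ref{LEM:Homogeneity}, whose setup requires precisely the scaling and permutation equivariance of the $Q_m$ established in Proposition \ref{PRO:Properties of the Qm}: combined with the $\mathcal{S}_n$-invariance of $R_f$, these force a vertex-order to be a fixed point of the $\mathcal{S}_n$-action on $\Z^n$, whence $v\in\Z\mathbf{1}$, contradicting the hypothesis. The main obstacle is the existence step in (a): the set of $(U,V)$ on which $Q_0$ vanishes is typically nonempty, so the pointwise limit $f(e^{-T}UV^*)\to Q_0(U,V)$ is not uniform on $\U(n)\times\U(n)$. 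Handling this cleanly likely requires passing through the Ronkin integral and dominated convergence rather than controlling the level set $\V(f)\cap\sLog^{-1}(-T\mathbf{1})$ directly.
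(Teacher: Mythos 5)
Your reduction to $v=0$ via $f\mapsto f\cdot\det^{-k}$ is a sound normalization, and your orbit-averaging argument for the normal-cone claim in (a) is correct and somewhat slicker than the paper's direct vertex-extremality argument. The trouble starts with the existence claim $0\in\Img(\nu_f)$ and with part (b).

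\textbf{The gap in (a).} You flag the non-uniformity of the convergence $f(e^{-T}UV^*)\to Q_0(U,V)$ as the obstacle, worrying that $Q_0$ vanishes on a nonempty subset of $\U(n)\times\U(n)$. But this is precisely where the proposition has genuine content, and the worry is unfounded: when $v\in\Z\mathbf{1}$ is a vertex of $\sNewt(f)$, the coefficient $Q_v$ \emph{never} vanishes on $\U(n)\times\U(n)$. To see this, observe (as the paper does) that decomposing $f=\sum_d f_d$ into homogeneous parts, the part $f_{v\cdot\mathbf{1}}$ of degree $v\cdot\mathbf{1}$ is exactly the one contributing the monomial $z^v$, and the vertex hypothesis forces $v$ to be the \emph{unique} lattice point of $\sNewt(f)$ with that value of $m\cdot\mathbf{1}$ (otherwise the $\mathcal{S}_n$-orbit of any other such point would have $v$ as its barycenter, contradicting $v$ being a vertex). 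Hence $f_{v\cdot\mathbf{1}}(A\diag(z)B^{-1})=Q_v(A,B)z^v$, and evaluating on $A=P\diag(z)P^{-1}$ (for diagonalizable matrices, then by density) shows $f_{v\cdot\mathbf{1}}=\alpha\det^N$ for a nonzero scalar $\alpha$, where $v=N\mathbf{1}$. Under your normalization $v=0$ this says $Q_0\equiv\alpha$ is a nonzero constant. With this in hand, the tail estimate $\bigl|\Psi_{U,V}(f)(e^{-T}\mathbf{1})-\alpha\bigr|\leq e^{-T\delta}\sum_{m\neq 0}\max_{\U(n)^2}|Q_m|$ is uniform in $(U,V)$, so for $T$ large $-T\mathbf{1}\in\A(\Psi_{U,V}(f))^C$ with order $0$ simultaneously for every unitary pair, which is exactly what is needed to conclude $-T\mathbf{1}\in\sA(f)^C$ with $\nabla R_f(-T\mathbf{1})=0$. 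Your Ronkin-integral approach is not wrong in spirit, but the reason dominated convergence succeeds is precisely that $Q_0$ is nowhere zero, so there is nothing to dominate near a bad set.

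\textbf{The gap in (b).} You correctly identify that Lemma \ref{LEM:Homogeneity} together with the equivariance properties in Proposition \ref{PRO:Properties of the Qm} is the right tool, but the mechanism you describe --- that ``$\mathcal{S}_n$-invariance of $R_f$ forces a vertex-order to be a fixed point of $\mathcal{S}_n$'' --- is not a valid inference. The invariance of $R_f$ gives $\nabla R_f(\sigma x)=\sigma\nabla R_f(x)$, so the $\mathcal{S}_n$-orbit of a component $E$ consists of components whose orders are the $\mathcal{S}_n$-orbit of $\nu_f(E)$; nothing forces $\nu_f(E)$ itself to be $\mathcal{S}_n$-fixed. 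The actual argument is different: applying Lemma \ref{LEM:Homogeneity} to $A\mapsto\det(A)^N Q_v(A,I_n)$ (whose column-homogeneity is $N\mathbf{1}+v$) shows that if $v\notin\Z\mathbf{1}$ then $Q_v$ vanishes at some $(U_0,V_0)\in\U(n)\times\U(n)$. Because $v$ is a vertex of $\sNewt(f)$, deleting it from $S_f$ pushes $v$ \emph{outside} $\Newt(\Psi_{U_0,V_0}(f))$ entirely. Since for $x\in\sA(f)^C$ one has $\nabla R_f(x)=\nabla R_{\Psi_{U_0,V_0}(f)}(x)\in\Newt(\Psi_{U_0,V_0}(f))$, no component of $\sA(f)^C$ can have order $v$. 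This step --- vertexness promoting ``coefficient vanishes at a point'' to ``lattice point leaves the Newton polytope'' --- is what you need to make the contradiction run.
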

\begin{proof}
Assume $v \in \Z\mathbf{1}$. If there is another point $m \in \sNewt(f) \cap \Z^n$ such that $v \cdot \mathbf{1} = m \mathbf{1}$, then $m \notin \Z\mathbf{1}$ (or it would be equal to $v$) so the convex hull of $\mathcal{S}_n \cdot m$ is an $(n - 1)$-dimensional polytope contained in $\sNewt(f)$ (because the action of $\mathcal{S}_n$ on $\{\mathbf{1}\}^\perp$ is irreducible) that contains $v$ because $\d v = \sum_{\sigma \in \mathcal{S}_n} \frac{1}{n!}\sigma \cdot m$. It implies that $v$ is not a vertex of $\sNewt(f)$, which contradicts our hypothesis. Therefore, for all $m \in \sNewt(f)$, $m \neq v \Rightarrow m \cdot \mathbf{1} \neq m \cdot v$. With the same kind of argument, we could prove that $v \cdot \mathbf{1} - m \cdot \mathbf{1}$ is always positive or always negative when $m$ browses $\sNewt(f) \cap \Z^n$. We will assume without loss of generality that it is always positive. We set $\delta > 0$ the minimum taken by the quantity $v \cdot \mathbf{1} - m \cdot \mathbf{1}$. We will need it later in the proof.

Now, decompose $f$ as $\d \sum_{d \in \Z} f_d$ where the $f_d$ are homogeneous polynomials of degree $d$, all zero except a finite number of them. Let for all $d$, and for all $A,B,z$, $\d f_d(A\diag(z)B^{-1}) = \sum_{m \in S_{f_d}} Q_m^{(d)}(A,B)z^m$. For all $\lambda \in \C$, we have,
\[
f_d(\lambda A\diag(z)B^{-1}) = \sum_{m \in S_{f_d}} \lambda^{m \cdot \mathbf{1}}Q_m^{(d)}(A,B)z^m = f_d(A\diag(z)B^{-1}) = \lambda^d\sum_{m \in S_{f_d}} \lambda^dQ_m^{(d)}(A,B)z^m.
\]
Thus, for all $m \in S_{f_d}$, $m \cdot \mathbf{1} = d$. It implies that $\d S_f = \biguplus_{d \in \N} S_{f_d}$ and for all $m \in S_f$, the $Q_m$ of $f$ is the $Q_m$ of $f_{m \cdot \mathbf{1}}$. In particular, if we set $v = N\mathbf{1}$ with $N \in \Z$ (because $v \in \Z\mathbf{1}$), we have $v \cdot \mathbf{1} = nN$ and $f_{nN}(A\diag(z)B^{-1}) = Q_v(A,B)z^v$. Notice that for all invertible diagonalisable matrix $A = P\diag(z)P^{-1}$, we have,
\[
f_{nN}(A) = Q_v(P,P)z^v = f_{nN}(PP^{-1})z^v = f_{nN}(I_n)\det(A)^N.
\]
By density, this equality remains true for any invertible matrix. In fact $f_{nN} = \alpha\det^N$ for some $\alpha \in \C^*$. In particular, $Q_v$ does not vanish on $\U(n) \times \U(n)$. Let $(U,V) \in \U(n) \times \U(n)$ and $x \in \R_+^*$.
\begin{align*}
    \abs{\Psi_{U,V}(f)(\e^x\mathbf{1}) - Q_v(U,V)(\e^x\mathbf{1})^v} & = \abs{\sum_{m \neq v} Q_m(U,V)\e^{xm \cdot \mathbf{1}}}\\
    & \leq \sum_{m \neq v} \max_{\U(n) \times \U(n)}\{\abs{Q_m}\}\e^{x(v \cdot \mathbf{1} - \delta)} \textrm{ by definition of $\delta$.}\\
    & = \leq \e^{-x\delta}\sum_{m \neq v} \max_{\U(n) \times \U(n)}\{\abs{Q_m}\}\e^{xv \cdot \mathbf{1}}\\
    & < \abs{\alpha}\e^{xv \cdot \mathbf{1}} \textrm{ if $x$ is large enough.}\\
    & = \abs{Q_v(U,V)(\e^x\mathbf{1})^v}.
\end{align*}
By \cite[Proposition 2.7]{Forsberg} applied to $\Psi_{U,V}$, it implies that $x\mathbf{1} \in \A(\Psi_{U,V}(f))^C$ and $\nabla R_{\Psi_{U,V}(f)}(x\mathbf{1}) = v$ for every $U,V$ so $x\mathbf{1} \in \sA(f)^C$ and $\nabla R_f(x\mathbf{1}) = v$. Let $E$ be the connected component where $x$ belongs. We have by definition of $\nu_f$ that $\nu_f(E) = v$ so $v \in \Img(\nu_f)$. It is true for any $x$ large enough so $E$ is unbounded and contains a half-line included in $\R\mathbf{1}$. By the proposition \ref{PRO:Maximal cone}, it means that $\mathbf{1}$ or $-\mathbf{1}$ belongs in $C_{\{v\}}$.

If $v \notin \Z\mathbf{1}$, by the proposition \ref{PRO:Properties of the Qm} and the lemma \ref{LEM:Homogeneity} applied to $A \mapsto \det(A)^NQ_v(A,I_n)$ for $N$ large enough, $Q_v$ vanishes on $\U(n) \times \U(n)$. As $v$ is a vertex of $\sNewt(f)$, if we consider some $U_0,V_0$ unitary such that $Q_v(U_0,V_0) = 0$, then $v \notin \Newt(\Psi_{U_0,V_0}(f))$ so there is no connected component of order $v$ in the complement of the amoeba of $\Psi_{U_0,V_0}(f)$. It implies that there is no connected component of order $v$ in the complement of the spherical amoeba of $f$, which proves the proposition.
\end{proof}
\begin{remark}
    This is a difference between classical amoebas and matrix amoebas. For matrix amoebas, every vertex of the Newton polytope is in the image of the order map. In the case of matrices, there are at most two because there are at most two vertices on the line $\Z\mathbf{1}$.
\end{remark}
\begin{remark}
    For a Laurent polynomial $f$, it is possible but rare to find a connected component of $\A(f)^C$ of order $m$ if the coefficient of $f$ at the monomial $x^m$ is 0. Nisse call those coefficient "virtually non zero" and it has consequences as \cite[Theorem 1.2]{Nisse}. Therefore, as each $Q_m$ vanishes on $\U(n) \times \U(n)$ when $m \notin \Z\mathbf{1}$, it seems possible that for all matrix polynomial $f$, $\Img(\nu_f) \subset \Z\mathbf{1}$. We have neither been able to prove it nor to find a counterexample.
\end{remark}

\subsection{Some examples}

We will see three examples to illustrate this section. The $\alpha_k$ in these examples denote non zero complex constants. They all are in dimension $n = 2$ except the first example, which is in dimension $n \geq 2$. Notice that the support of $f$ can easily be computed thus we won't detail the computation of the $S_f$.
\begin{example}
    $f \in \Mat_n(\C)^* \subset \C[\GL_n]$ is a linear map.
\end{example}
We will see that in this case, the amoeba of $f$ is the whole space $\R^n$. Indeed, we know that $f(A)$ can be written as $\tr(MA)$ for some matrix $M$, by Riesz's representation theorem. Let us use $M$'s singular values decomposition $M = U_0DV_0^*$. Let $P$ be the matrix of a permutation which does not have a fixed point (that exists because $n \geq 2$). We have for all $z \in T$,
\[
\Psi_{V_0P,U_0}(f)(z) = \tr(U_0DV_0^*V_0P\diag(z)U_0^*) = \tr(P\diag(z)D) = 0.
\]
We deduce that $\Psi_{V_0P,U_0}(f) = 0$ so its amoeba is $\R^n$. Therefore, $\sA(f) = \R^n$. Notice that it is different than for Laurent polynomial where the amoeba is the whole space if and only if the polynomial is null.

\begin{example}\label{EX:Ex 2}
    $n = 2$ and $f : A \mapsto \alpha_1a_{11}^2 + \alpha_2\det(A)$.
\end{example}
Here, $S_f = \sNewt(f) \cap \Z^2 = \{(2,0),(1,1),(0,2)\}$. Notice that it is contained in $\{(1,1)\}^\perp + (1,1)$. It is actually easy to verify that in general $\sNewt(f)$ is contained in an affine hyperplane of $\R^n$ parallel to $\{1\}^\perp$ if and only if $f$ is homogeneous. Here, $f$ is indeed homogeneous of degree 2. $(2,0)$ and $(0,2)$ are vertices of $\sNewt(f)$ that are not in $\Z(1,1)$ so by the proposition \ref{PRO:Image nu, vertices}, they do not have associated components. It implies that $\sA(f)^C$ is empty or connected. By the proposition \ref{PRO:Maximal cone}, if $\sA(f)^C \neq \O$, the biggest cone contained in it is $C_{\sNewt(f)} = \R(1,1)$ and it is open so it is of the form $\{(a + b,-a + b)|\abs{b} < r\}$ for some $r > 0$. Same thing with $r = 0$ if $\sA(f)^C = \O$. The fact that this set is empty or not depends on the $\alpha_k$.
\begin{proposition}
    $\sA(f)^C = \O$ if and only if $\abs{\alpha_2} > \abs{\alpha_1}$.
\end{proposition}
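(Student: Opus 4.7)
The plan is to identify $\sA(f)$ (and hence its complement) explicitly via the SVD. Writing $A = U\diag(\sigma_1,\sigma_2)V^*$ with $U, V \in \U(2)$, $\sigma_i = e^{x_i}$, and setting $u_i = U_{1i}\overline{V_{1i}}$ and $c = \det(U)\overline{\det V}$ (so $|c|=1$), one has $a_{11} = \sigma_1 u_1 + \sigma_2 u_2$ and $\det A = c\sigma_1\sigma_2$. Then $f(A) = 0$ rewrites as $a_{11}^2 = -(\alpha_2/\alpha_1)\,c\,\sigma_1\sigma_2$, which forces $|a_{11}|^2 = (|\alpha_2|/|\alpha_1|)\sigma_1\sigma_2$. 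The heart of the proof is to show that, for fixed $(\sigma_1,\sigma_2)$, the pair $(a_{11}, c)$ ranges over the full product $\{|w|\leq\sigma_{\max}\}\times S^1$ as $(U,V)$ vary over $\U(2)^2$, with $\arg a_{11}$ and $\arg c$ independent. Granting this, solvability of $f(A) = 0$ with prescribed singular values is equivalent to the modulus condition $(|\alpha_2|/|\alpha_1|)\sigma_1\sigma_2 \leq \sigma_{\max}^2$, i.e. $\sigma_{\min}/\sigma_{\max} \leq |\alpha_1|/|\alpha_2|$.

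For the modulus range, the operator-norm bound gives $|a_{11}| \leq \|A\| = \sigma_{\max}$, saturated at $U=V=I$; the value $|a_{11}|=0$ is attained at $U = I$ with $V$ a permutation matrix; intermediate moduli follow by continuity on the connected space $\U(2)^2$. For the phase independence, decompose $U = e^{i\psi_U} U_0$ and $V = e^{i\psi_V} V_0$ with $U_0, V_0 \in \mathrm{SU}(2)$: then $c = e^{2i(\psi_U - \psi_V)}$ is freely tuned by $\psi_U - \psi_V$, while $a_{11} = e^{i(\psi_U - \psi_V)}\tilde a_{11}$ with $\tilde a_{11}$ depending only on $(U_0, V_0)$; the residual action $U_0 \mapsto \diag(e^{i\theta}, e^{-i\theta})\, U_0$ stays in $\mathrm{SU}(2)$, rotates $\tilde a_{11}$ by $e^{i\theta}$, and does not affect $c$, so the two phases decouple.

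Passing to log coordinates, the solvability condition reads $|x_1 - x_2| \geq \ln(|\alpha_2|/|\alpha_1|)$, giving
\[
\sA(f)^C \;=\; \bigl\{(x_1,x_2)\in\R^2/\mathcal{S}_2 : |x_1 - x_2| < \ln(|\alpha_2|/|\alpha_1|)\bigr\},
\]
an open strip about the diagonal whose half-width is $r = \tfrac12\ln(|\alpha_2|/|\alpha_1|)$ and whose recession cone is $\R(1,1)$, consistent with Proposition~\ref{PRO:Maximal cone}. This strip is nontrivial precisely when $\ln(|\alpha_2|/|\alpha_1|) > 0$, i.e. when $|\alpha_2| > |\alpha_1|$, which establishes the dichotomy of the proposition. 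The main obstacle is the rigorous verification of the $(a_{11}, c)$ phase decoupling; the $\mathrm{SU}(2)\times U(1)$-decomposition above makes it transparent, and as an independent check one can use Proposition~\ref{PRO:Union amoebas} to reduce the question to computing the range of $\ln|w|$ for $w$ a root of the quadratic $\alpha_1(a + bw)^2 + \alpha_2 c\, w = 0$ as $(a, b, c)$ vary admissibly, recovering the same strip.
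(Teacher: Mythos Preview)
Your argument is correct and in fact proves more than the proposition asks: you compute the matrix amoeba exactly as the complement of the strip $\{|x_1-x_2|<\ln(|\alpha_2|/|\alpha_1|)\}$, whereas the paper only determines whether the complement is empty. The paper's proof is shorter because it leverages the structural analysis carried out just before the proposition: since $f$ is homogeneous of degree $2$, $\sA(f)$ is invariant under translation by $\R(1,1)$, and the preceding discussion already pinned down that $\sA(f)^C$ is either empty or a strip about the diagonal; hence it suffices to test the single point $0$, i.e.\ to ask whether $f$ has a zero on $\U(2)$ (equivalently on $\mathrm{SU}(2)$). That reduces to whether $\alpha_1 a^2+\alpha_2=0$ has a solution with $|a|\le 1$, which is exactly your modulus criterion specialised to $\sigma_1=\sigma_2=1$. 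Your route---a full SVD parametrisation, then the $\U(1)\times\mathrm{SU}(2)$ decomposition to decouple $c$ from $\tilde a_{11}$ and rotate the phase of $\tilde a_{11}$ freely---is a genuinely different and more hands-on computation; its payoff is the explicit value $r=\tfrac12\ln(|\alpha_2|/|\alpha_1|)$ for the half-width, which the paper leaves implicit. One cosmetic point: your intermediate claim that $(a_{11},c)$ sweeps out the full product $\{|w|\le\sigma_{\max}\}\times S^1$ is stronger than you need and not argued directly, but your subsequent homogeneity reduction (showing $c$ drops out of the equation entirely, so only the range of $\tilde a_{11}$ over $\mathrm{SU}(2)^2$ matters) makes that claim unnecessary and the proof goes through cleanly.
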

\begin{proof}
In this case, $\sA(f)^C \neq \O$ if and only if it contains 0 if and only if $f$ vanishes on $\U(2)$ if and only if $f$ vanishes on $\textup{SU}(2)$ (by homogeneity). Let $\d U = \begin{pmatrix} a & b \\ -\overline{b} & \overline{a} \end{pmatrix} \in \textup{SU}(2)$ with $\abs{a}^2 + \abs{b}^2 = 1$. $f(U) = \alpha_1a^2 + \alpha_2 = 0$ if and only if $a$ is a square root of $\d -\frac{\alpha_2}{\alpha_1}$. It is possible to find such an $a$ if and only if $\abs{\alpha_2} \leq \abs{\alpha_1}$, which proves the proposition.
\end{proof}
Notice that it provides a second example of non-zero polynomial whose amoeba is the whole space (when $\abs{\alpha_2} \leq \abs{\alpha_1}$). The figure \ref{FIG:Example 2} shows the Newton polytope of $f$ and its amoeba in the case where $\abs{\alpha_2} > \abs{\alpha_1}$.
\begin{figure}[H]
    \centering
    \includegraphics[width = 0.3\textwidth]{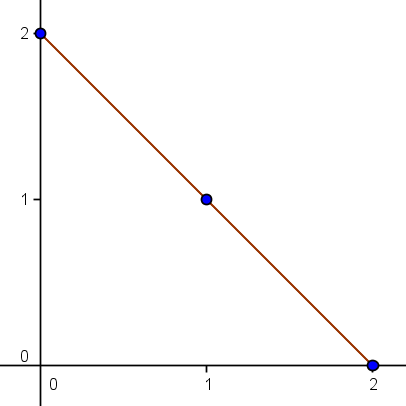}
    \includegraphics[width = 0.3\textwidth]{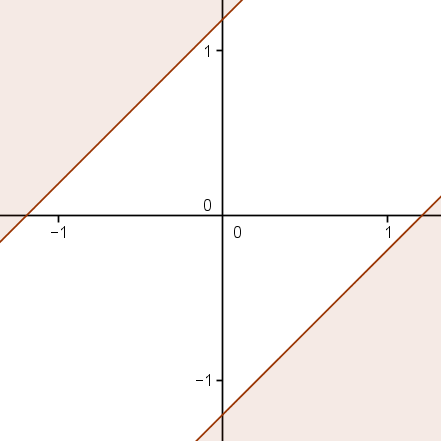} 
    \caption{The Newton polytope (left) and the amoeba (right) of a polynomial of the example \ref{EX:Ex 2}.}
    \label{FIG:Example 2}
\end{figure}

\begin{example}\label{EX:Ex 3}
    $n = 2$ and $f : A \mapsto \alpha_1 + \alpha_2a_{11} + \alpha_3\det(A) + \alpha_4a_{22}\det(A)$.
\end{example}
In this case, $S_f = \sNewt(f) \cap \Z^2 = \{(0,0),(1,0),(0,1),(1,1),(2,0),(0,2)\}$. $(1,0)$, $(0,1)$, $(2,0)$ and $(0,2)$ all are vertices that do not belong to $\Z(1,1)$ so they do not have associated component by the proposition \ref{PRO:Image nu, vertices}. $(0,0)$ is a vertex on $\Z(1,1)$ so it has an associated unbounded component $C_0$ whose maximal cone is the quarter of the plane $\{(x,y) \in \R^2|\max\{x,y\} \leq 0\}$ by propositions \ref{PRO:Image nu, vertices} and \ref{PRO:Maximal cone}. $(1,1)$ can or can not have an associated component in function of the $\alpha_k$. If it has one, it is bounded because $(1,1)$ is an interior point of $\sNewt(f)$. let $C_1$ be this component, or $C_1 = \O$ if $(1,1) \notin \Img(\nu_f)$.
\begin{proposition}
    If the polynomial $\abs{\alpha_4}t^3 - \abs{\alpha_3}t^2 + \abs{\alpha_2}t + \abs{\alpha_1}$ takes negative values on $\R_+^*$ (which means that $\abs{\alpha_3}$ is large enough compared to the others $\abs{\alpha_k}$), $C_1 \neq \O$. Conversely, if $\abs{\alpha_3}^2 < 4\abs{\alpha_2}\abs{\alpha_4}$, then $C_1 = \O$.
\end{proposition}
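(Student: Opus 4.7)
My plan for both directions is to reduce, via Propositions \ref{PRO:Union amoebas} and \ref{PRO:Expression Rf}, to classical questions about each Laurent polynomial $g_{U,V} := \Psi_{U,V}(f)$, and then to apply Proposition 2.7 of \cite{Forsberg} together with the standard edge-polynomial criterion for the order map.

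For the ``if'' direction, suppose $p(t) := |\alpha_4|t^3 - |\alpha_3|t^2 + |\alpha_2|t + |\alpha_1|$ is negative at some $t_0 > 0$, and set $x_0 := (\ln t_0, \ln t_0)$. I would first check $x_0 \notin \sA(f)$: any $A \in \sLog^{-1}(x_0)$ has the form $t_0 W$ with $W \in \U(2)$, and the reverse triangle inequality combined with $|\det W| = 1$ and $|W_{ii}| \leq 1$ gives
\[
|f(t_0 W)| \geq |\alpha_3|t_0^2 - |\alpha_1| - |\alpha_2|t_0 - |\alpha_4|t_0^3 = -p(t_0) > 0.
\]
To identify the order of the containing component as $(1,1)$, I then estimate the Ronkin gradient of each $g_{U,V}$ at $x_0$: the coefficient $Q_{(1,1)}(U,V) = \alpha_3 \det U/\det V$ has constant modulus $|\alpha_3|$, while the other five monomials of $g_{U,V}$ evaluated at $z = (t_0, t_0)$ sum in modulus to at most $|\alpha_1| + |\alpha_2|t_0 + |\alpha_4|t_0^3 < |\alpha_3|t_0^2$, thanks to the Cauchy--Schwarz bounds $|U_{11}\bar{V}_{11}| + |U_{12}\bar{V}_{12}| \leq 1$ and $|U_{21}\bar{V}_{21}| + |U_{22}\bar{V}_{22}| \leq 1$ coming from the unit-norm rows of $U$ and $V$. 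Proposition 2.7 of \cite{Forsberg} then gives $\nabla R_{g_{U,V}}(x_0) = (1,1)$ for every $(U,V)$, and integrating against $\mu^2$ via Proposition \ref{PRO:Expression Rf} yields $\nabla R_f(x_0) = (1,1)$; hence $x_0 \in C_1$.

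For the converse, suppose $|\alpha_3|^2 < 4|\alpha_2||\alpha_4|$ and, toward a contradiction, assume $C_1 \neq \O$. By the argument in the proof of Proposition \ref{PRO:Injectivity nuf}, picking any $x \in C_1$ forces $(1,1) \in \Img(\nu_{g_{U,V}})$ for every $(U,V)$, so it suffices to exhibit a single $(U_0, V_0)$ where this fails. I take $U_0 = I$ and $V_0 = B \in \U(2)$, so that
\[
g_{I,B}(z) = \alpha_1 + \alpha_2 \bar{B}_{11}\, z_1 + \alpha_3\, \overline{\det B}\, z_1 z_2 + \alpha_4\, \bar{B}_{22}\, \overline{\det B}\, z_1 z_2^2
\]
is a four-term Laurent polynomial whose Newton polytope is the triangle $\Conv\{(0,0),(1,0),(1,2)\}$, with $(1,1)$ in the relative interior of the edge $[(1,0),(1,2)]$. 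By the classical edge-polynomial criterion, $(1,1) \in \Img(\nu_{g_{I,B}})$ iff the two roots of the quadratic $h(w) := \alpha_2 \bar{B}_{11} + \alpha_3 \overline{\det B}\, w + \alpha_4 \bar{B}_{22}\, \overline{\det B}\, w^2$ have distinct moduli. A direct calculation (using $\bar{B}_{11} = B_{22}/\det B$ from Cramer's rule for $2\times 2$ unitary matrices) shows the discriminant ratio $(\alpha_3 \overline{\det B})^2 / [(\alpha_2 \bar{B}_{11})(\alpha_4 \bar{B}_{22} \overline{\det B})]$ equals $\alpha_3^2/(\alpha_2\alpha_4)\cdot 1/|B_{11}|^2$; tuning $|B_{11}|^2 = |\alpha_3|^2/(4|\alpha_2||\alpha_4|) \in (0,1)$ together with an appropriate phase of $\det B$ brings this ratio to exactly $4$, producing a double (hence equal-modulus) root of $h$, and therefore $(1,1) \notin \Img(\nu_{g_{I,B}})$, a contradiction.

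The main obstacle I anticipate is the phase-matching step in the converse: the rescaling of $\alpha_3^2/(\alpha_2\alpha_4)$ afforded by the choice of $B$ with $U_0 = I$ is purely positive-real, so the above construction works cleanly only when $\alpha_3^2/(\alpha_2\alpha_4)$ is already real-positive. In the fully complex case I would combine a non-diagonal $U_0$ with $B$ to rotate the phase, at the cost of enlarging the support of $g_{U_0, B}$ beyond four monomials and placing $(1,1)$ in the interior of its Newton polytope; the edge-polynomial criterion would then have to be replaced by an interior lattice-point argument along the lines of Purbhoo-style lopsidedness or the Passare--Rullg{\aa}rd spine analysis.
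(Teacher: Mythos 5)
Your forward direction is essentially the paper's argument: both bound the non-$(1,1)$ monomials of each $\Psi_{U,V}(f)$ at $z=(r,r)$ by $|\alpha_1|+|\alpha_2|r+|\alpha_4|r^3$ (you make the Cauchy--Schwarz on rows explicit, the paper leaves it implicit) and apply the lopsidedness criterion of \cite[Prop.\ 2.7]{Forsberg}. The small notational slip $Q_{(1,1)}(U,V)=\alpha_3\det U/\det V$ (it should be $\alpha_3\det U\,\overline{\det V}$) is harmless since only $|Q_{(1,1)}|=|\alpha_3|$ enters. The paragraph about integrating $\nabla R_{\Psi_{U,V}(f)}$ against $\mu^2$ is superfluous once you know $\nabla R_{\Psi_{U,V}(f)}(x_0)=(1,1)$ for all $(U,V)$, but it does not hurt.

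The converse has a genuine gap, which you yourself flag, and it is worth being precise about why it cannot be patched within your framework. With $U_0=I$ and $V_0=B\in\U(2)$, the discriminant ratio of the edge quadratic is $\alpha_3^2/(\alpha_2\alpha_4)\cdot|B_{11}|^{-2}$: as your own computation shows, the unitary constraint kills all phase freedom, so the argument of this ratio equals $\arg(\alpha_3^2/(\alpha_2\alpha_4))$ for every admissible $B$. Worse, by the very edge-polynomial criterion you invoke, if $\overline{\alpha_2\alpha_4}\,\alpha_3^2\notin\R_{\geq 0}$ then the two roots of the edge quadratic of $g_{I,B}$ have \emph{distinct} moduli for every $B$ with nonvanishing diagonal, so $(1,1)\in\Img(\nu_{g_{I,B}})$ always holds and the family $(I,B)$ can never furnish a witness. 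Keeping the support of $g_{U,V}$ equal to $\{(0,0),(1,0),(1,1),(1,2)\}$ forces one of $U,V$ to be diagonal (in $\U(2)$, a single vanishing off-diagonal entry forces the matrix to be diagonal), and a diagonal factor contributes only a unit-modulus scalar to each coefficient, so the phase obstruction persists. Once $U$ and $V$ are both non-diagonal, the support of $g_{U,V}$ becomes $\{(0,0),(1,0),(0,1),(1,1),(2,1),(1,2)\}$ and $(1,1)$ is an interior lattice point, at which point the edge-polynomial criterion no longer applies at all. The paper sidesteps all of this by working with the single pair $(I_2,I_2)$ and invoking the Passare--Rullg{\aa}rd spine formula \cite[Thms.\ 2--3]{Passare}: assuming $C_1\neq\O$, the coefficient $c_{(1,1)}$ is given by an explicit power series in $\alpha_2\alpha_4/\alpha_3^2$ with radius of convergence $\tfrac14$, and its convergence forces $|\alpha_3|^2\geq 4|\alpha_2||\alpha_4|$ with no case split on phases. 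To complete your proof you would need to either reproduce this series argument or find an interior-lattice-point criterion for non-diagonal $(U_0,V_0)$, which you correctly identify as a substantially different task.
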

\begin{proof}
Assume the first condition holds. Let $r \in \R_+^*$ such that $\abs{\alpha_4}r^3 - \abs{\alpha_3}r^2 + \abs{\alpha_2}r + \abs{\alpha_1} < 0$ and let $x = \ln(r)$. Let $A$ be a matrix whose singular values are both $\e^x = r$. Then $A = rU$ where $U \in \U(2)$ so,
\[
\abs{\alpha_3\det(A)} - \abs{\alpha_1 + \alpha_2a_{11} + \alpha_4a_{22}\det(A)} \geq \abs{\alpha_3}r^2 - \abs{\alpha_1} - \abs{\alpha_2}r - \abs{\alpha_4}r^3 > 0.
\]
It implies that $(x,x) \notin \sA(f)$ and by \cite[Proposition 2.7]{Forsberg} applied to each $\Psi_{U,V}(f)$, the order of the component containing $(x,x)$ is $(1,1)$. $C_1 \neq \O$.

Conversely, assume now that $C_1 \neq \O$. Then, every $\A(\Psi_{U,V}(f))^C$ has a connected component of order $(1,1)$. Is is in particular the case of $g = \Psi_{I_2,I_2}(f)$. For every $(z_1,z_2) \in (\C^*)^2$,
\[
g(z_1,z_2) = \alpha_1 + \alpha_2z_1 + \alpha_3z_1z_2 + \alpha_4z_1z_2^2.
\]
Therefore, $S_g = \Newt(g) \cap \Z^2 = \{(0,0),(1,0),(1,1),(1,2)\}$. As $\A(g)^C$ has a component of order $(1,1)$ and every other lattice point of $\Newt(g)$ are vertices, each lattice point of $g$ has an associated connected component. The spine of $\A(g)$ is the tropical curve of a tropical polynomial of the form $\max\{c_{(0,0)},c_{(1,0)} + x,c_{(1,1)} + x + y,c_{(1,2)} + x + 2y\}$ and by \cite[Theorem 2]{Passare} and \cite[Theorem 3]{Passare}, $c_{(1,1)}$ is given by,
\[
c_{(1,1)} = \ln\abs{\alpha_3} + \Re\left(\sum_{k \in K} \frac{(-k_3 - 1)!}{k_2!k_4!}(-1)^{k_3 - 1}\alpha_2^{k_2}\alpha_3^{k_3}\alpha_4^{k_4}\right),
\]
where $K = \{(k_2,k_3,k_4) \in \Z^3|k_2 \geq 0, k_3 < 0, k_4 \geq 0, k_2 + k_3 + k_4 = 0, k_3 + 2k_4 = 0\}$. We can parameterize the elements of $K$ by $K = \{(q,-2q,q)|q \in \N^*\}$ so,
\[
c_{(1,1)} = \ln\abs{\alpha_3} - \Re\left(\sum_{q \in \N^*} \frac{(2q - 1)!}{q!^2}\left(\frac{\alpha_2\alpha_4}{\alpha_3^2}\right)^q\right),
\]
and $\d \frac{(2q - 1)!}{q!^2} = \frac{\binom{2q}{q}}{2q} \eq{q}{+\infty} \frac{4^q}{2\sqrt{\pi}q^{3/2}}$ (this is a consequence of the Stirling formula) so this series has a convergence radius of $\d \frac{1}{4}$. It implies that $\abs{\alpha_3}^2 \geq 4\abs{\alpha_2}\abs{\alpha_4}$, which proves the proposition.
\end{proof}
The figure \ref{FIG:Example 3} shows the Newton polytope of $f$ and the shape of its amoeba in the case where $C_1 \neq \O$. Notice that we aren't able to compute spherical amoebas yet so this picture does not represent an approximation of the real amoeba of $f$ for certain values of the $\alpha_k$ but is only a representation of what its amoeba looks like (in particular, the picture and the real amoeba have the same homotopy).
\begin{figure}[H]
    \centering
    \includegraphics[width = 0.3\textwidth]{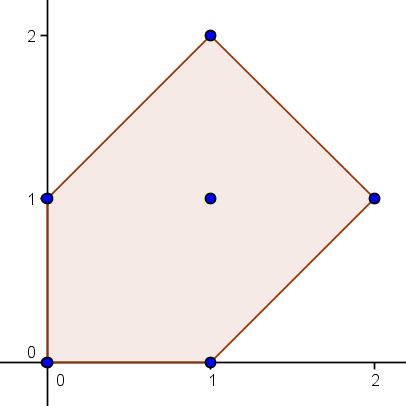}
    \includegraphics[width = 0.3\textwidth]{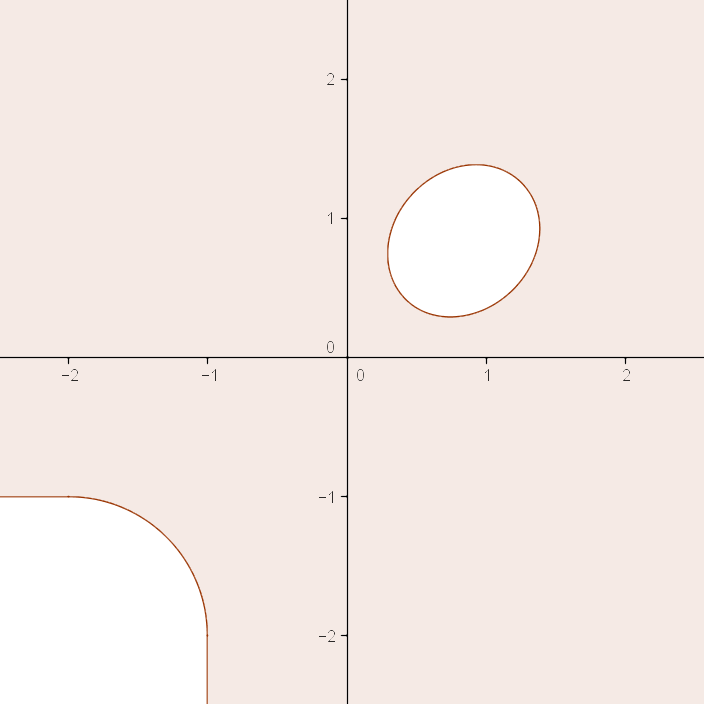} 
    \caption{The Newton polytope (left) and the shape of the amoeba (right) of a polynomial of the example \ref{EX:Ex 3}.}
    \label{FIG:Example 3}
\end{figure}

\section{Definition of matrix Newton polytope using representation theory}\label{SEC:Representations}

The purpose of this section is to show that the Newton polytope $\sNewt(f)$ of a polynomial $f \in \C[\GL_n]$ and an other definition of Kapranov \cite{Kapranov} of the Newton polytope of $f$ (that we will call $\Delta(f)$) coincide. Let $G$ be a reductive algebraic group (we will take $G = \GL_n(\C)$), then $H = \{(g,g)|g \in G\}$ is the stabilizer of $G^2$ and $G \cong G^2/H$ is a homogeneous space. Moreover, the Borel subgroup $B \subset G^2$ acts on it with an open orbit, thus $G$ is a spherical variety ($B$ is the set of couples of invertible matrices $(U,L)$ with $U$ upper triangular and $L$ lower triangular when $G = \GL_n(\C)$).

By the Peter-Weyl theorem, we have that,
\[
\C[G] = \bigoplus_{\lambda \in \Lambda} W_\lambda,
\]
where $\Lambda$ is the Weyl chamber associated to $G$ ($\Lambda = \{x \in \R^n|x_1 \leq \cdots \leq x_n\}$ when $G = \GL_n(\C)$) and $W_\lambda$ is the irreducible representation of $G^2$ with highest $B$-weight $\lambda$ by the highest weight theorem. Moreover, for each $\lambda$, there exists, up to a rescalling, a unique $B$-weight vector of weight $\lambda$ (that belongs to $W_\lambda$) and $W_\lambda \cong V_\lambda \otimes V_\lambda^*$ as representations of $G^2$ where $V_\lambda$ is the irreducible representation of $G$ of highest weight $\lambda$. More details about representation theory in \cite{Fulton}.

Let $W$ be the Weyl group of $G$ ($W = \mathcal{S}_n$ when $G = \GL_n(\C)$). The moment polytope $\Delta(f)$ of $f$ is the convex hull of the $W$-orbits of the $\lambda \in \Lambda$ such that the projection of $f$ on $W_\lambda$ parallel to $\d \bigoplus_{\mu \neq \lambda} W_\mu$ is non-zero in $\R^n$. It is in particular stable by $W$. Kapranov calls it the Newton polytope in \cite{Kapranov}.

This section is a digression about representation theory of $\GL_n(\C)$ that has for purpose to show that for any $f \in \C[\GL_n]$, $\sNewt(f) = \Delta(f)$. This section is not linked to amoebas, neither tropical geometry.

\subsection{Some convex geometry}

First of all, we need results about convex geometry.

\begin{definition}
    For every $\lambda \in \Lambda = \{\lambda \in \R^n|\lambda_1 \leq \cdots \leq \lambda_n\}$, we define $C(\lambda)$ as the convex hull of the $\mathcal{S}_n$-orbit of $\lambda$ in $\R^n$. When $(x,y) \in \Lambda^2$, we say that $x \preccurlyeq y$ if $C(x) \subset C(y)$.
\end{definition}
We want to show that $\preccurlyeq$ is a (partial) order relation over $\Lambda$. It is obviously reflexive and transitive. Let us show that it is anti-symmetric.

\begin{lemma}\label{LEM:Characterisation of C(x)}
    For every $x \in \Lambda$, $C(x)$ is the set of $y \in \R^n$ such that for all $K \subset \{1, \ldots, n\}$, $y \cdot \ind_K \leq x \cdot \varpi_{\abs{K}}$ with equality when $K = \{1, \ldots, n\}$, where $\ind_K \in \R^n$ is such that for all $i$, $(\ind_K)_i = \ind_K(i)$ and for all $1 \leq k \leq n$, $\varpi_k = \ind_{[\![n + 1 - k,n]\!]}$.
\end{lemma}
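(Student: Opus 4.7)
The plan is to recognize the statement as Rado's classical characterization of the permutohedron $C(x)$ via majorization. Write $P(x)$ for the set on the right-hand side. The inclusion $C(x) \subseteq P(x)$ is immediate: $P(x)$ is convex (an intersection of closed half-spaces with an affine hyperplane), so it suffices to show that every vertex $\sigma \cdot x$ of $C(x)$ satisfies the defining constraints. The equality for $K = \{1, \ldots, n\}$ is trivial, and for $|K| = k$ the quantity $(\sigma \cdot x) \cdot \ind_K = \sum_{i \in K} x_{\sigma^{-1}(i)}$ is a sum of $k$ distinct entries of $x$, which is maximized (since $x_1 \leq \cdots \leq x_n$) by picking the top $k$ entries, giving exactly $x \cdot \varpi_k$.

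For the reverse inclusion, I would first observe that $P(x)$ is $\mathcal{S}_n$-invariant: if $y \in P(x)$ and $\tau \in \mathcal{S}_n$, then $(\tau \cdot y) \cdot \ind_K = y \cdot \ind_{\tau^{-1}(K)} \leq x \cdot \varpi_{|\tau^{-1}(K)|} = x \cdot \varpi_{|K|}$, and the equality case at $K = \{1,\ldots,n\}$ is clearly preserved. Since $C(x)$ is $\mathcal{S}_n$-invariant by definition, it is enough to show $P(x) \cap \Lambda \subseteq C(x)$. For $y \in \Lambda$ sorted increasingly, $y \cdot \ind_K$ is maximized over $k$-subsets by the choice $K = [\![n-k+1, n]\!]$, so the constraints defining $P(x)$ collapse to $\sum_{i=n-k+1}^n y_i \leq \sum_{i=n-k+1}^n x_i$ for all $k$, with equality at $k = n$. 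This is precisely the classical majorization condition $y \prec x$.

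The substantive step is then Rado's theorem: $y \prec x$ implies $y \in C(x)$. I would deduce this from Birkhoff--von Neumann. A standard elementary argument shows that $y \prec x$ is equivalent to $y = Mx$ for some doubly stochastic matrix $M$; decomposing $M$ as a convex combination of permutation matrices then writes $y$ as a convex combination of the points $\sigma \cdot x$, whence $y \in C(x)$. An alternative route is induction on the number of coordinates where $x$ and $y$ disagree, applying at each step a $T$-transform (averaging a pair of coordinates of $y$) to decrease the discrepancy while staying inside $C(x)$. This is the only non-routine step; the remaining reductions (convexity, permutation symmetry, identifying the tightest inequality on $\Lambda$) are bookkeeping that serves to show the geometric content of $P(x)$ really is the majorization order.
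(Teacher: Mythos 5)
Your proposal is correct, and the argument differs from the paper's in a genuine way. The paper proves the reverse inclusion by a self-contained induction on $n$: after reducing to $y \in \Lambda$, it performs a single $T$-transform $x' = (1-t)x + t\,\tau\cdot x$ (with $\tau$ a well-chosen adjacent transposition and $t$ chosen so that $x'_q = y_n$), observes that $C(x') \subseteq C(x)$, deletes a coordinate from both $x'$ and $y$, verifies by hand that the truncated pair still satisfies the partial-sum inequalities, and closes the induction by transitivity of $\preccurlyeq$. You instead reduce to the classical majorization statement and invoke Hardy--Littlewood--P\'olya (majorization $\Leftrightarrow$ doubly stochastic transformation) followed by Birkhoff--von Neumann to express $y$ as a convex combination of permutations of $x$. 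The reductions you carry out before that point --- convexity of $P(x)$, checking the inequalities on vertices $\sigma\cdot x$, $\mathcal{S}_n$-invariance of $P(x)$, and identifying the binding constraints on the chamber $\Lambda$ --- are all sound and match the paper's in substance. Your route buys conceptual clarity and brevity at the cost of quoting two external theorems (themselves typically proved by $T$-transforms, so not independent of the paper's method), whereas the paper's induction is longer but fully self-contained and avoids any black boxes. Your alternative route via $T$-transforms is in fact essentially the paper's argument with the induction parameter changed from the dimension $n$ to the number of disagreeing coordinates; either bookkeeping works.
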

\begin{proof}\ \\
\framebox{$\subset$} If $y \in C(x)$, then it can be written as $\d \sum_{\sigma \in \mathcal{S}_n} \alpha_\sigma\sigma \cdot x$ where the $\alpha_\sigma$ are non-negative real numbers whose sum equals 1. Therefore, for all $K \subset \{1, \ldots, n\}$,
\[
y \cdot \ind_K = \sum_{\sigma \in \mathcal{S}_n} \alpha_\sigma(\sigma \cdot x) \cdot \ind_K \leq \sum_{\sigma \in \mathcal{S}_n} \alpha_\sigma x \cdot \varpi_{\abs{K}} = x \cdot \varpi_{\abs{K}}.
\]
The inequality $(\sigma \cdot x) \cdot \ind_K \leq x \cdot \varpi_{\abs{K}}$ comes from the fact that the coefficients of $x$ increase. Moreover, it becomes an equality when $K = \{1, \ldots, n\}$, which proves that $y \cdot \mathbf{1} = x \cdot \mathbf{1}$.\\

\noindent\framebox{$\supset$} We will prove it by induction on $n$. It is trivial for $n = 1$. Let $n \geq 2$ and $y \in \R^n$ that verifies the hypothesis. Up to permuting its coordinates, we can assume that $y \in \Lambda$. By the hypothesis, $y_n = y \cdot \ind_{\{n\}} \leq x \cdot \varpi_1 = x_n$ and $\d x_1 \leq \frac{1}{n}x \cdot \mathbf{1} = \frac{1}{n}y \cdot \mathbf{1} \leq y_n$. We have $x_1 \leq y_n \leq x_n$ so there exists an integer $1 \leq q \leq n$ such that $x_q \leq y_n \leq x_{q + 1}$. Let $\tau$ be the transposition between $q$ and $q + 1$. Let $x' = (1 - t)x + t\tau \cdot x$ for some $0 \leq t \leq 1$. As the support of $\tau$ is $\{q,q + 1\}$, for all $k \notin \{q,q + 1\}$, $x'_k = x_k$. Moreover,
\begin{align*}
    t = 0 & \Rightarrow x'_q = x_q \leq y_n,\\
    t = 1 & \Rightarrow x'_q = x_{q + 1} \geq y_n.
\end{align*}
Therefore, for the right value of $t$, $x'_q = y_n$. The sum of all the coefficients is stable by permutation so we have that $x'_{q + 1} = x_q + x_{q + 1} - y_n$. Now, let
\[
\tilde{x} = \begin{pmatrix} x_1 \\ \vdots \\ x_{q - 1} \\ x_q + x_{q + 1} - y_n \\ x_{q + 2} \\ \vdots \\ x_n \end{pmatrix}, \quad \tilde{y} = \begin{pmatrix} y_1 \\ \vdots \\ y_{n - 1} \end{pmatrix}.
\]
They are both vectors of $\R^{n - 1}$. Let us show they verify the hypothesis of the lemma. They both belong in $\Lambda$ (trivial) so it is enough to check that for all $k$, $\tilde{y} \cdot \varpi_k \leq \tilde{x} \cdot \varpi_k$ with equality when $k = n - 1$. The equality when $k = n - 1$ is trivial. Now, let $1 \leq k \leq n - 1$.
\begin{align*}
    k \leq n - q - 1 \Rightarrow (\tilde{y} - \tilde{x}) \cdot \varpi_k & = y_{n - k} + \cdots + y_{n - 1} - x_{n + 1 - k} - \cdots - x_n\\
    & = y \cdot \varpi_k + y_{n + 1 - k} - y_n - x \cdot \varpi_k\\
    & \leq 0,\\
    k \geq n - q \Rightarrow (\tilde{y} - \tilde{x}) \cdot \varpi_k & = y_{n - k} + \cdots + y_{n - 1} - x_{n - k} - \cdots - x_{q - 1} - (x_q + x_{q + 1} - y_n) - x_{q + 2} - \cdots - x_n\\
    & = y \cdot \varpi_{k + 1} - x \cdot \varpi_{k + 1}\\
    & \leq 0.
\end{align*}
By induction, it proves that $\tilde{y} \in C(\tilde{x}) \Leftrightarrow \tilde{y} \preccurlyeq \tilde{x}$. Notice that $x' = (\tilde{x},x_n) \preccurlyeq x$ by construction and $y = (\tilde{y},y_n) \preccurlyeq x'$ because $\tilde{y} \preccurlyeq \tilde{x}$. Finally, by transitivity, $y \preccurlyeq x$, which proves the lemma.
\end{proof}

\begin{proposition}\label{PRO:Order}
    $\preccurlyeq$ is anti-symmetric, thus an order relation over $\Lambda$.
\end{proposition}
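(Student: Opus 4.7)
The plan is to reduce anti-symmetry of $\preccurlyeq$ to the numerical inequalities provided by Lemma~\ref{LEM:Characterisation of C(x)}. Suppose $x, y \in \Lambda$ with $x \preccurlyeq y$ and $y \preccurlyeq x$, that is, $C(x) \subset C(y)$ and $C(y) \subset C(x)$. Then in particular $x \in C(y)$, so by the lemma applied with $K = [\![n+1-k, n]\!]$ for each $1 \leq k \leq n$, we get
\[
x \cdot \varpi_k \;=\; x \cdot \ind_{[\![n+1-k,n]\!]} \;\leq\; y \cdot \varpi_k.
\]
Here the left-hand side is exactly $x \cdot \varpi_k$ by definition of $\varpi_k$. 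Swapping the roles of $x$ and $y$ (using $y \in C(x)$) gives the reverse inequality $y \cdot \varpi_k \leq x \cdot \varpi_k$, so in fact $x \cdot \varpi_k = y \cdot \varpi_k$ for every $1 \leq k \leq n$.

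From this chain of equalities I then recover $x = y$ coordinate by coordinate. For $k = 1$ one reads off $x_n = y_n$. For general $k$, taking the difference $x \cdot \varpi_k - x \cdot \varpi_{k-1}$ yields $x_{n+1-k} = y_{n+1-k}$, and iterating $k = 1, 2, \ldots, n$ shows $x = y$. The key (and only nontrivial) input is Lemma~\ref{LEM:Characterisation of C(x)}, which is already proved; once that characterisation is available, anti-symmetry is essentially a triangular-system argument on the partial sums of the sorted coordinates, so I do not anticipate any genuine obstacle.
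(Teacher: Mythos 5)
Your proof is correct and follows the same route as the paper: reduce anti-symmetry to the numerical characterisation of $C(\cdot)$ in Lemma~\ref{LEM:Characterisation of C(x)}, deduce $x \cdot \varpi_k = y \cdot \varpi_k$ for all $k$ by combining the two inclusions, and recover $x = y$ by telescoping the partial sums. You simply spell out the steps that the paper's one-line proof leaves implicit.
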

\begin{proof}
By the lemma \ref{LEM:Characterisation of C(x)}, if $x \preccurlyeq y$ and $y \preccurlyeq x$ and if $x$ and $y$ both belong to $\Lambda$, then for all $k$, $x \cdot \varpi_k = y \cdot \varpi_k$, which implies that $x = y$.
\end{proof}
\begin{proposition}\label{PRO:Linearity of C}
    For every $(x,y) \in \Lambda^2$ and for every $\alpha \in \R$, $C(\alpha x) = \alpha C(x)$ and $C(x + y) = C(x) + C(y)$.
\end{proposition}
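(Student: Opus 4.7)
The plan is to prove the two equalities separately, drawing on Lemma \ref{LEM:Characterisation of C(x)} for the less obvious direction. The first identity $C(\alpha x) = \alpha C(x)$ is essentially immediate from the definition: since the $\mathcal{S}_n$-action on $\R^n$ is linear, we have $\mathcal{S}_n\cdot(\alpha x) = \alpha(\mathcal{S}_n\cdot x)$, and taking convex hulls gives the claim. For $\alpha<0$ the vector $\alpha x$ no longer lies in $\Lambda$, but $C$ makes sense on any vector since it only depends on the $\mathcal{S}_n$-orbit, so this raises no issue.

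For $C(x+y) = C(x)+C(y)$, the two inclusions have different flavors. The inclusion $C(x+y) \subset C(x)+C(y)$ follows directly from the definition of $C$ as a convex hull of an orbit. Indeed, for each $\sigma \in \mathcal{S}_n$, the linearity of the action gives $\sigma\cdot(x+y) = \sigma\cdot x + \sigma\cdot y \in C(x)+C(y)$. Since $C(x)+C(y)$ is convex and contains every vertex of $C(x+y)$, it contains the whole polytope $C(x+y)$.

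The reverse inclusion $C(x)+C(y) \subset C(x+y)$ is where Lemma \ref{LEM:Characterisation of C(x)} plays the crucial role, and this is the only step where some content is needed. Given $u \in C(x)$ and $v \in C(y)$, the lemma provides, for every $K \subset \{1,\dots,n\}$, the inequalities $u\cdot\ind_K \leq x\cdot\varpi_{|K|}$ and $v\cdot\ind_K \leq y\cdot\varpi_{|K|}$, with equality when $K = \{1,\dots,n\}$. Adding them yields
\[
(u+v)\cdot\ind_K \;\leq\; (x+y)\cdot\varpi_{|K|},
\]
with equality when $K=\{1,\dots,n\}$. Since $x+y$ still lies in $\Lambda$ (componentwise sums of nondecreasing sequences are nondecreasing), one applies the sufficient direction of Lemma \ref{LEM:Characterisation of C(x)} to $x+y$ and deduces $u+v \in C(x+y)$. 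There is no substantive obstacle: the whole content is packaged into the H-description of the permutohedra given by the lemma, which is manifestly additive under sums sorted in the same direction.
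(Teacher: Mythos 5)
Your proof is correct and essentially follows the paper's route: both prove $C(x+y)\subset C(x)+C(y)$ directly from the definition of $C$ as a convex hull of an orbit, and both prove the reverse inclusion by invoking the inequality characterisation from Lemma \ref{LEM:Characterisation of C(x)}. The only minor difference is that the paper first reduces to showing $\sigma\cdot x + \rho\cdot y \in C(x+y)$ for all permutations $\sigma,\rho$ (since these generate $C(x)+C(y)$ as a convex hull), while you apply the lemma directly to arbitrary $u\in C(x)$, $v\in C(y)$; this is a cosmetic streamlining, as the lemma already covers arbitrary points of $C(x)$ and not only its vertices.
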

\begin{proof}
The first part is trivial, let us focus on the second one. First of all, the Minkowski sum of two convex is convex thus $C(x) + C(y)$ is convex. As it contains by definition every $\sigma \cdot (x + y)$, we have that $C(x + y) \subset C(x) + C(y)$. Let us show the reciprocal. As both sets are convex, it is enough to show that $C(x + y)$ contains every $\sigma \cdot x + \rho \cdot y$ where $\sigma$ and $\rho$ are permutations. Indeed, for all set $K \subset \{1, \ldots, n\}$,
\[
(\sigma \cdot x + \rho \cdot y) \cdot \ind_K = (\sigma \cdot x) \cdot \ind_K + (\rho \cdot y) \cdot \ind_K \leq (x + y) \cdot \varpi_k,
\]
and $(\sigma \cdot x + \rho \cdot y) \cdot \mathbf{1} = (x + y) \cdot \mathbf{1}$ so by the characterisation given by the lemma \ref{LEM:Characterisation of C(x)}, $\sigma \cdot x + \rho \cdot y \in C(x + y)$.
\end{proof}

\subsection{The highest weight vector $v_\lambda$}

The purpose of the subsection is to study the irreducible $\GL_n(\C)^2$-representations $W_\lambda$. We won't be able to compute them explicitly but we can at least compute the only (up to a non-zero scalar) $B$-weight vector $v_\lambda \in \C[\GL_n]$ of weight $\lambda \in \Lambda$. We will focus particularly on the Newton polytopes $\sNewt$ of the polynomials of $W_\lambda\backslash\{0\}$.

\begin{definition}
    When $I$ and $J$ are subsets of $\{1, \ldots, n\}$ of same cardinality, we define $\det_{I,J}(A)$ as the determinant of the sub-$A$-matrix of $A$ where we only kept the lines indexed in $I$ and the columns indexed in $J$. In particular, $\det_{I,J}$ is a polynomial over $\Mat_n(\C)$. We also define $\det_k$ as $\det_{[\![k + 1 - n,n]\!],[\![k + 1 - n,n]\!]}$.
\end{definition}
\begin{definition}
    We define for every $\lambda \in \Lambda$,
    \[
    v_\lambda = \prod_{k = 1}^n \det_k^{\lambda_{n - k + 1} - \lambda_{n - k}},
    \]
    where $\lambda_0 = 0$ by convention. As $\lambda \in \Lambda$, for every $k \geq 1$, $\lambda_k \leq \lambda_{k + 1}$ so $v_\lambda \in \C[\GL_n]$. Notice that in particular, $v_{\varpi_k} = \det_k$ for all $k$.
\end{definition}
\begin{proposition}
    $v_\lambda$ is the $B$-weight vector of weight $\lambda$ in $\C[\GL_n]$ in the sens that for every $(U,L) \in B$ the Borel subgroup of $\GL_n(\C)^2$, $(U,L) \cdot v_\lambda = \lambda(U)\lambda^*(L)v_\lambda$ where
    \[
    \lambda : U \mapsto \prod_{k = 1}^n u_{kk}^{\lambda_k}, \quad \lambda^* : L \mapsto \prod_{k = 1}^n l_{kk}^{-\lambda_k}.
    \]
    are the associated weight and its dual.
\end{proposition}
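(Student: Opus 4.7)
The plan is to reduce the statement to two ingredients: a transformation law for each principal minor $\det_k$ under the Borel action, together with a telescoping identity for $\lambda$. Since $v_\lambda = \prod_{k=1}^n \det_k^{\lambda_{n-k+1} - \lambda_{n-k}}$ and $B$ acts on $\C[\GL_n]$ by algebra automorphisms (via the left-right action $((U,L) \cdot f)(A) = f(UAL^{-1})$, which is the standard choice making $(\GL_n(\C) \times \GL_n(\C))/\GL_n(\C)_\mathrm{diag}$ spherical with this Borel), the weight computation for $v_\lambda$ will factor through that for each $\det_k$.

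First I would compute $\det_k(UAL^{-1})$ for $(U,L) \in B$. Let $I = \{n-k+1, \ldots, n\}$. The key observation is that upper-triangularity of $U$ forces $U_{ij} = 0$ whenever $i \in I$ and $j \notin I$ (since then $j < n-k+1 \leq i$), while lower-triangularity of $L$, hence of $L^{-1}$, gives the dual vanishing for $(L^{-1})_{*,I}$. Consequently the $k \times k$ block factors as
\[
(UAL^{-1})_{I,I} = U_{I,I} \, A_{I,I} \, (L^{-1})_{I,I}.
\]
Taking determinants and using that $U_{I,I}$ is upper triangular with diagonal entries $(u_{jj})_{j \in I}$ while $(L^{-1})_{I,I}$ is lower triangular with diagonal entries $(l_{jj}^{-1})_{j \in I}$, I obtain
\[
\det_k(UAL^{-1}) = \Big(\prod_{j \in I} u_{jj}\Big)\Big(\prod_{j \in I} l_{jj}^{-1}\Big) \det_k(A) = \varpi_k(U) \, \varpi_k^*(L) \, \det_k(A),
\]
so each $\det_k$ is a $B$-semi-invariant of weight $(\varpi_k, -\varpi_k)$. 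This is consistent with the announced identity $v_{\varpi_k} = \det_k$.

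Next I would combine over the factors of $v_\lambda$ to get
\[
(U,L) \cdot v_\lambda = \prod_{k=1}^n \varpi_k(U)^{\lambda_{n-k+1} - \lambda_{n-k}} \, \varpi_k^*(L)^{\lambda_{n-k+1} - \lambda_{n-k}} \, v_\lambda.
\]
The total exponent of a fixed $u_{jj}$ on the right-hand side is $\sum_{k \geq n-j+1}(\lambda_{n-k+1} - \lambda_{n-k})$; under the substitution $m = n-k+1$ this becomes the telescoping sum $\sum_{m=1}^{j}(\lambda_m - \lambda_{m-1}) = \lambda_j$ (using $\lambda_0 = 0$), identifying the $U$-character with $\lambda(U)$. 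The analogous computation on the $L$-side produces $\lambda^*(L)$.

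There is no genuine obstacle in this argument; the only care required is in fixing conventions, namely which factor of $B$ is upper versus lower triangular, and that $\det_k$ denotes the \emph{lower-right} $k \times k$ minor, so that $v_{\varpi_k} = \det_k$ as asserted. Once these are pinned down, the block-matrix factorisation and the telescoping identity are both routine, and multiplicativity of the action immediately delivers the result.
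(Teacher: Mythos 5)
Your proof is correct and follows essentially the same route as the paper: you establish that each $\det_k$ is a $B$-semi-invariant of weight $(\varpi_k, -\varpi_k)$ via the lower-right $k\times k$ block factorisation (the paper phrases this with an explicit $2\times 2$ block-matrix decomposition, you with the index set $I$, but it is the same computation), and then telescope over the product defining $v_\lambda$. No substantive difference.
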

\begin{proof}
Let $(U,L) \in B$ and $X \in \GL_n(\C)$. Let us decompose them into blocs,
\[
U = \begin{pmatrix} * & * \\ 0 & U' \end{pmatrix}, \quad L = \begin{pmatrix} * & 0 \\ * & L' \end{pmatrix}, \quad X = \begin{pmatrix} * & * \\ * & X' \end{pmatrix},
\]
where $L'$, $U'$ and $X'$ and $k \times k$ matrices. We compute that
\[
UXL^{-1} = \begin{pmatrix} * & * \\ * & U'X'(L')^{-1} \end{pmatrix}.
\]
Therefore, $\det_k(UXL^{-1}) = \det(U'X'(L')^{-1}) = \varpi_k(U)\varpi_k^*(L)\det_k(X)$ where $\varpi_k \in \Lambda$ is the weight $(0,\ldots,0,1,\ldots,1)$ with $k$ ones. By product, $v_\lambda(UXL^{-1}) = \lambda(U)\lambda^*(L)v_\lambda(X)$, which proves the proposition.
\end{proof}

Now, let us determine the spherical Newton polytope of the $v_\lambda$.

\begin{proposition}\label{PRO:sNewt(v_lambda)}
    $\sNewt(v_\lambda) = C(\lambda)$ when $\lambda = \varpi_k$ ($1 \leq k \leq n$) or $-\varpi_n$.
\end{proposition}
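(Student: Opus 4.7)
The plan is to handle each of the three families of weights separately, compute $v_\lambda(A\,\diag(z)\,B^{-1})$ explicitly, read off the support $S_{v_\lambda}$, and identify its convex hull with $C(\lambda)$ by matching it to the $\mathcal{S}_n$-orbit of $\lambda$.

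The two easy cases are $\lambda = \varpi_n$ and $\lambda = -\varpi_n$. From the formula defining $v_\lambda$ one checks that $v_{\varpi_n} = \det$ (only the $k = n$ factor contributes, with exponent $1$) and $v_{-\varpi_n} = \det^{-1}$ (only the $k = n$ factor, with exponent $-1$). Then
\[
\det(A\,\diag(z)\,B^{-1}) = \tfrac{\det(A)}{\det(B)}\, z_1 \cdots z_n,
\]
so $S_{v_{\varpi_n}} = \{\mathbf{1}\}$, and likewise $S_{v_{-\varpi_n}} = \{-\mathbf{1}\}$. Since $\pm\mathbf{1}$ are both $\mathcal{S}_n$-fixed, $C(\pm\varpi_n) = \{\pm\mathbf{1}\}$, matching $\sNewt(v_{\pm\varpi_n})$.

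The substantive case is $\lambda = \varpi_k$ for $1 \leq k < n$, where $v_{\varpi_k} = \det_k$ is the bottom-right $k \times k$ minor. Set $I = [\![n-k+1,n]\!]$. Writing $M = A\,\diag(z)\,B^{-1}$, the block $M_{I,I}$ factors as $A_{I,\cdot}\,\diag(z)\,(B^{-1})_{\cdot,I}$, so the Cauchy-Binet formula yields
\[
\det_k(A\,\diag(z)\,B^{-1}) \;=\; \sum_{\substack{L \subset [\![1,n]\!] \\ |L| = k}} \det(A_{I,L})\,\det((B^{-1})_{L,I})\, \prod_{\ell \in L} z_\ell.
\]
Thus $Q_m \equiv 0$ unless $m = \ind_L$ for some $k$-subset $L$, and in that case $Q_{\ind_L}(A,B) = \det(A_{I,L})\,\det((B^{-1})_{L,I})$. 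Each factor is a non-zero polynomial in the matrix entries (being a minor), and neither is killed by the invertibility constraint $\det(A)\det(B) \neq 0$; hence $Q_{\ind_L} \not\equiv 0$ on $\GL_n(\C) \times \GL_n(\C)$ for every $k$-subset $L$. Therefore $S_{v_{\varpi_k}} = \{\ind_L : L \subset [\![1,n]\!],\ |L| = k\}$, which is exactly the $\mathcal{S}_n$-orbit of $\varpi_k$. Taking convex hulls gives $\sNewt(v_{\varpi_k}) = C(\varpi_k)$.

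The only step that is not pure bookkeeping is justifying $Q_{\ind_L} \not\equiv 0$, which is the main obstacle but is routine: a minor of a generic invertible matrix is a non-zero polynomial, since we can plug in a permutation matrix that makes the minor equal to $\pm 1$. With this checked, the three cases assemble into the stated equality.
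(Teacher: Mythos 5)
Your proof is correct and follows essentially the same route as the paper's: express $\det_k(A\,\diag(z)\,B^{-1})$ by Cauchy--Binet, read off the support as $\{\ind_L : |L|=k\} = \mathcal{S}_n\cdot\varpi_k$, and handle $\lambda = -\varpi_n$ via $v_{-\varpi_n} = \det^{-1}$. The one point you make explicit that the paper leaves implicit is the verification that each coefficient $Q_{\ind_L} = \det(A_{I,L})\det((B^{-1})_{L,I})$ is not identically zero on $\GL_n(\C)\times\GL_n(\C)$ (e.g.\ via permutation matrices), which is a small but genuine gap-filling.
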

\begin{proof}
We have that $v_{\varpi_k} = \det_k$. This proposition is a direct consequence of the Binet-Cauchy formula,
\[
\det_{I,J}(AB) = \sum_{\abs{K} = k} \det_{I,K}(A)\det_{K,J}(B)
\]
when $A$ and $B$ are matrices and $\abs{I} = \abs{J} = k$. In particular, with $A \leftarrow A\diag(z)$ and $B \leftarrow B^{-1}$ when $A$ and $B$ are invertible and $z$ is in the torus,
\[
\det_k(A\diag(z)B^{-1}) = \sum_{\abs{K} = k} \det_{[\![k + 1 - n,n]\!],K}(A)\det_{K,[\![k + 1 - n,n]\!]}(B)z^{\ind_K}.
\]
Therefore, $S_{\det_k} = \{\ind_K|\abs{K} = n\} = \mathcal{S}_n \cdot \varpi_k$ so by definition, $\sNewt(v_{\varpi_k}) = C(\varpi_k)$. For $\lambda = -\varpi_n$, $v_{\lambda} = \det^{-1}$ so $\sNewt(v_{-\varpi_n}) = \{-\mathbf{1}\} = C(-\varpi_n)$.
\end{proof}

\begin{lemma}\label{LEM:sNewt(fg)}
    If $f,g$ are two matrix polynomials, $\sNewt(fg) = \sNewt(f) + \sNewt(g)$ (in the sense of Minkowski).
\end{lemma}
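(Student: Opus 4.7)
The plan is to reduce the statement to the classical Newton polytope identity for Laurent polynomials, $\Newt(PQ) = \Newt(P) + \Newt(Q)$, by exploiting the ring homomorphism $\Psi_{U,V}$ together with the genericity statement of Proposition \ref{PRO:Sf = SPsiUVf}.

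Concretely, I would proceed in three short steps. First, since $\Psi_{U,V} : \C[\GL_n] \to \C[X^\pm]$ is a $\C$-algebra homomorphism, we have $\Psi_{U,V}(fg) = \Psi_{U,V}(f) \cdot \Psi_{U,V}(g)$ for every $(U,V) \in \U(n) \times \U(n)$. Second, Proposition \ref{PRO:Sf = SPsiUVf} yields three full-measure subsets of $\U(n) \times \U(n)$, one each for $f$, $g$, and $fg$, on which the Laurent support of $\Psi_{U,V}(\cdot)$ coincides with the spherical support of the original function. Their intersection still has full Haar measure, hence is non-empty; fix any pair $(U_0, V_0)$ in it. Then, directly from the definition of $\sNewt$,
\[
\sNewt(f) = \Newt(\Psi_{U_0,V_0}(f)), \qquad \sNewt(g) = \Newt(\Psi_{U_0,V_0}(g)), \qquad \sNewt(fg) = \Newt(\Psi_{U_0,V_0}(fg)).
\]
Third, apply the classical identity $\Newt(PQ) = \Newt(P) + \Newt(Q)$ to $P = \Psi_{U_0,V_0}(f)$ and $Q = \Psi_{U_0,V_0}(g)$, and combine with the first step to conclude.

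There is no serious obstacle here: the classical Laurent-polynomial identity is standard (the inclusion $\subseteq$ follows by distributing the product, while $\supseteq$ follows because the coefficient of $PQ$ at any vertex $v$ of the Minkowski sum is the product of the non-zero coefficients at the unique vertices of $\Newt(P)$ and $\Newt(g)$ summing to $v$, and hence non-zero since $\C$ is a domain). The only matrix-specific subtlety is the simultaneous specialisation at a single pair $(U_0, V_0)$, which costs nothing because finite intersections of full-measure sets remain non-empty.
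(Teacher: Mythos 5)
Your proof is correct and follows essentially the same strategy as the paper: reduce to the classical identity $\Newt(PQ)=\Newt(P)+\Newt(Q)$ by choosing a single generic pair at which all three spherical Newton polytopes are realized under $\Psi$, and then invoke that $\Psi_{U,V}$ is a ring homomorphism. The only cosmetic difference is that you justify genericity via the Haar-measure statement of Proposition \ref{PRO:Sf = SPsiUVf} on $\U(n)\times\U(n)$, while the paper instead appeals to Zariski density of the good locus in $\GL_n(\C)^2$; both deliver a suitable $(U_0,V_0)$ (or $(A,B)$) and the rest of the argument is identical.
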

\begin{proof}
It is well-known that this formula is true for classical Laurent polynomial and their Newton polytope. For any $h \in \C[\GL_n]$, the set of $(A,B) \in \GL_n(\C)^2$ such that $\sNewt(h) = \Newt(\Psi_{A,B}(h))$ is Zariski open, thus dense. It implies that for generic invertible matrices $A,B$, $\sNewt(f) = \Newt(\Psi_{A,B}(f))$, $\sNewt(g) = \Newt(\Psi_{A,B}(g)$ and $\sNewt(fg) = \Newt(\Psi_{A,B}(fg))$ so,
\[
\sNewt(fg) = \Newt(\Psi_{A,B}(fg)) = \Newt(\Psi_{A,B}(f)\Psi_{A,B}(g)) = \Newt(\Psi_{A,B}(f)) + \Newt(\Psi_{A,B}(g)) = \sNewt(f) + \sNewt(g).
\]
\end{proof}

\begin{proposition}
    For every $\lambda$, $\sNewt(v_\lambda) = C(\lambda)$.
\end{proposition}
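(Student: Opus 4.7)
The plan is to factor $v_\lambda$ into a product of the fundamental pieces $v_{\varpi_k}$, whose spherical Newton polytopes were computed in Proposition~\ref{PRO:sNewt(v_lambda)}, and then combine the Minkowski additivity of $\sNewt$ (Lemma~\ref{LEM:sNewt(fg)}) with the linearity of $C$ (Proposition~\ref{PRO:Linearity of C}).

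Set $a_k = \lambda_{n-k+1} - \lambda_{n-k}$ with the convention $\lambda_0 = 0$, so that by definition $v_\lambda = \prod_{k=1}^n v_{\varpi_k}^{a_k}$. A short telescoping computation of the $j$-th coordinate shows $\sum_{k=1}^n a_k \varpi_k = \lambda$. The exponents $a_1,\ldots,a_{n-1}$ are non-negative because $\lambda \in \Lambda$, whereas $a_n = \lambda_1$ may have either sign.

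To keep all exponents non-negative I would first reduce to the case $\lambda_1 \geq 0$ by a translation. Set $\mu = \lambda - \lambda_1 \mathbf{1} \in \Lambda$, so that $v_\lambda = \det^{\lambda_1} v_\mu$. Since $\det^{\lambda_1} \in \C[\GL_n]$ for every integer $\lambda_1$, and since $\sNewt(\det^{\lambda_1}) = \{\lambda_1 \mathbf{1}\}$ (immediate from Proposition~\ref{PRO:sNewt(v_lambda)} when $\lambda_1 \geq 0$, and obtained for $\lambda_1 < 0$ by combining $\sNewt(\det) = \{\mathbf{1}\}$ and $\sNewt(\det^{-1}) = \{-\mathbf{1}\}$ with Lemma~\ref{LEM:sNewt(fg)}), the lemma gives $\sNewt(v_\lambda) = \lambda_1 \mathbf{1} + \sNewt(v_\mu)$. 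Simultaneously $C(\lambda) = \lambda_1 \mathbf{1} + C(\mu)$ is a mere translation of the convex hull. Hence it is enough to treat $\mu$, i.e.\ we may assume $\lambda_1 \geq 0$.

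Under this assumption every $a_k$ is non-negative, and iterating Lemma~\ref{LEM:sNewt(fg)} together with Proposition~\ref{PRO:sNewt(v_lambda)} yields
\[
\sNewt(v_\lambda) \;=\; \sum_{k=1}^n a_k\, \sNewt(v_{\varpi_k}) \;=\; \sum_{k=1}^n a_k\, C(\varpi_k).
\]
Since each $a_k \varpi_k$ lies in the cone $\Lambda$, iterating the additivity $C(x+y)=C(x)+C(y)$ from Proposition~\ref{PRO:Linearity of C} gives $\sum_k a_k C(\varpi_k) = C\bigl(\sum_k a_k \varpi_k\bigr) = C(\lambda)$, completing the proof. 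The only delicate point is the bookkeeping with negative powers of $\det$, which is isolated into the initial translation step and relies on the fact that Lemma~\ref{LEM:sNewt(fg)} applies to arbitrary elements of $\C[\GL_n]$.
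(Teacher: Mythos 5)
Your proof is correct and follows essentially the same route as the paper: factor $v_\lambda$ as a product of powers of the fundamental pieces $v_{\varpi_k}$, then combine Lemma~\ref{LEM:sNewt(fg)} with Propositions~\ref{PRO:sNewt(v_lambda)} and~\ref{PRO:Linearity of C}. The only difference is that you explicitly handle the possibly negative exponent of $\det$ by translating so that $\lambda_1 \geq 0$, whereas the paper simply asserts that the computation "remains true even when $\alpha_n < 0$"; your version is a bit more careful on that point, but the underlying argument is the same.
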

\begin{proof}
Any $\lambda \in \Lambda$ can be written as $\d \lambda = \sum_{k = 1}^n \alpha_k\varpi_k$ where the $\alpha_k$ are non-negative integers except $\alpha_n$ which is a relative integer. Notice that $\d v_\lambda = \prod_{k = 1}^n v_{\varpi_k}^{\alpha_k}$ so by lemma \ref{LEM:sNewt(fg)}, proposition \ref{PRO:sNewt(v_lambda)} and proposition \ref{PRO:Linearity of C},
\[
\sNewt(v_\lambda) = \sum_{k = 1}^n \alpha_k\sNewt(v_{\varpi_k}) = \sum_{k = 1}^n \alpha_kC(\varpi_k) = C(\lambda).
\]
This remains true even when $\alpha_n < 0$.
\end{proof}

\subsection{Proof of the equivalence}

We now have enough tools to prove the wanted proposition \ref{PRO:Equivalence Newton polytopes}.

\begin{proposition}\label{PRO:sNewt W_lambda}
    Every non-zero polynomial $f \in W_\lambda$ verifies $S_f = S_{v_\lambda}$, thus $\sNewt(f) = \sNewt(v_\lambda) = C(\lambda)$.
\end{proposition}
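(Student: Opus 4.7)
The plan is to exploit two structural features: that the support map $f \mapsto S_f$ is equivariant under the natural left--right action of $\GL_n(\C)^2$ on $\C[\GL_n]$, and that $W_\lambda$ is irreducible, so any non-zero element of $W_\lambda$ generates it under the group action.

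First I would verify the equivariance of the support. Fix the left action $((A,B) \cdot f)(X) = f(A^{-1} X B)$, which is the action making $W_\lambda \subset \C[\GL_n]$ a subrepresentation. Unfolding the defining expansion of the $Q_m$, one computes
\[
((A,B) \cdot f)(U \diag(z) V^{-1}) = f\bigl((A^{-1}U)\,\diag(z)\,(B^{-1}V)^{-1}\bigr) = \sum_{m \in \Z^n} Q_m(A^{-1}U,\, B^{-1}V)\, z^m,
\]
so $Q_m^{(A,B) \cdot f}(U,V) = Q_m^f(A^{-1}U, B^{-1}V)$. Since $(U,V) \mapsto (A^{-1}U, B^{-1}V)$ is a regular automorphism of $\GL_n(\C)^2$, the function $Q_m^{(A,B)\cdot f}$ vanishes identically if and only if $Q_m^f$ does. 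Hence $S_{(A,B)\cdot f} = S_f$.

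Next I would use irreducibility. The representation $W_\lambda$ is finite dimensional (isomorphic to $V_\lambda \otimes V_\lambda^*$) and irreducible, so the linear span of the orbit $\GL_n(\C)^2 \cdot f$ is a non-zero $\GL_n(\C)^2$-submodule of $W_\lambda$ and must therefore equal $W_\lambda$. In particular $v_\lambda \in W_\lambda$ can be written as a finite linear combination $v_\lambda = \sum_i c_i (A_i, B_i) \cdot f$ for some scalars $c_i$ and $(A_i, B_i) \in \GL_n(\C)^2$. If $m \notin S_f$, then by the first step each $Q_m^{(A_i,B_i) \cdot f}$ is identically zero, hence so is $Q_m^{v_\lambda} = \sum_i c_i Q_m^{(A_i, B_i) \cdot f}$, which shows $m \notin S_{v_\lambda}$. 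This yields $S_{v_\lambda} \subseteq S_f$, and swapping the roles of $f$ and $v_\lambda$ (using that $v_\lambda$ likewise generates $W_\lambda$) gives the reverse inclusion. Thus $S_f = S_{v_\lambda}$, and the equality $\sNewt(f) = \sNewt(v_\lambda) = C(\lambda)$ follows from the preceding proposition identifying $\sNewt(v_\lambda)$ with $C(\lambda)$.

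There is no serious obstacle: the argument is a direct consequence of the equivariance of $Q_m$ under translations on the arguments and of the irreducibility of $W_\lambda$. The only point that requires care is to fix the $\GL_n(\C)^2$-action on $\C[\GL_n]$ consistently with the one under which $v_\lambda$ is the highest weight vector, so that the inclusion $W_\lambda \subset \C[\GL_n]$ is genuinely a subrepresentation; once the conventions are aligned, the computation of $Q_m^{(A,B)\cdot f}$ is immediate from the definition.
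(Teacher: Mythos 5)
Your proof is correct and takes essentially the same approach as the paper: both hinge on the observation that the support $S_f$ (equivalently, the vanishing of $Q_m^f$) is invariant under the $\GL_n(\C)^2$-action, combined with irreducibility of $W_\lambda$. The only cosmetic difference is that the paper handles one inclusion via "$f \in \Span(\GL_n(\C)^2 \cdot v_\lambda)$" and the other via showing the subrepresentation $\{g \in W_\lambda : m \notin S_g\}$ is trivial, whereas you use the orbit-span formulation symmetrically in both directions.
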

\begin{proof}
First of all, it is clear that the support of a matrix polynomial is stable by the action of $\GL_n(\C)^2$ by definition. As $W_\lambda = \Span(\GL_n(\C) \cdot v_\lambda)$, any polynomial $f \in W_\lambda$ verifies $S_f \subset S_{v_\lambda}$ by sum. Now, consider some $m \in S_{v_\lambda}$ and $V = \{f \in W_\lambda|m \notin S_f\}$. It is clearly a vector space because $m \notin S_f \Leftrightarrow$ the $Q_m$ of $f$ is null. Moreover, $V$ is stable by the action of $\GL_n(\C)^2$, it is a representation of this group. But $W_\lambda$ is irreducible so $V = \{0\}$ or $W_\lambda$. However, as $m \in S_{v_\lambda}$, $v_\lambda \notin V$ thus $V = \{0\}$. It is true for any $m$ so the proposition is proven.
\end{proof}

\begin{proposition}\label{PRO:Equivalence Newton polytopes}
    For all polynomial $f \in \C[\GL_n]$, $\sNewt(f) = \Delta(f)$.
\end{proposition}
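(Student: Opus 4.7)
The plan is to use the Peter-Weyl decomposition $\C[\GL_n] = \bigoplus_{\lambda \in \Lambda} W_\lambda$ to reduce the claim to a statement about the maps $Q_m$. Write $f = \sum_{\lambda \in \Lambda} f_\lambda$ with $f_\lambda \in W_\lambda$. By Proposition~\ref{PRO:sNewt W_lambda}, $\sNewt(f_\lambda) = C(\lambda)$ whenever $f_\lambda \neq 0$, and since $C(\lambda) = \Conv(\mathcal{S}_n \cdot \lambda)$, the definition of $\Delta(f)$ immediately yields $\Delta(f) = \Conv\bigl(\bigcup_{f_\lambda \neq 0} \sNewt(f_\lambda)\bigr)$. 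Hence it suffices to establish the support-level equality $S_f = \bigcup_{f_\lambda \neq 0} S_{f_\lambda}$; taking convex hulls then gives $\sNewt(f) = \Delta(f)$.

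The inclusion $S_f \subset \bigcup_\lambda S_{f_\lambda}$ is immediate from the linearity $Q_m(f) = \sum_\lambda Q_m(f_\lambda)$: a nonzero left-hand side forces at least one summand on the right to be nonzero. The reverse inclusion rests on the key claim that the identity $Q_m(f) = \sum_\lambda Q_m(f_\lambda)$ is actually a direct-sum decomposition inside $\C[\GL_n \times \GL_n]$, so that $Q_m(f) = 0$ forces every $Q_m(f_\lambda) = 0$. Granting this, for any $\lambda$ with $f_\lambda \neq 0$ the direct computation $v_\lambda(\diag(z)) = z^\lambda$ yields $Q_\lambda(v_\lambda)(I_n, I_n) = 1$, so $\lambda \in S_{v_\lambda} = S_{f_\lambda}$ by Proposition~\ref{PRO:sNewt W_lambda}; the key claim then places $\lambda$ in $S_f$, and likewise every element of each $S_{f_\lambda}$.

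To prove the key claim, I plan to show that $Q_m(f_\mu)$ belongs to the subspace $W_\mu \otimes W_{\mu^*} \subset \C[\GL_n] \otimes \C[\GL_n] = \C[\GL_n \times \GL_n]$, where $\mu^*$ denotes the highest weight of the dual representation $V_\mu^*$. For fixed $B$, the function $A \mapsto f_\mu(A \diag(z) B^{-1})$ is a right translate of $f_\mu$ and so lies in $W_\mu$; extracting the coefficient of $z^m$ (a linear operation preserving $W_\mu$) places $A \mapsto Q_m(f_\mu)(A, B)$ in $W_\mu$. For fixed $A$ and $z$, the function $B \mapsto f_\mu(A \diag(z) B^{-1})$ factors as $f_\mu \circ L_{A \diag(z)} \circ \iota$ with $\iota(X) = X^{-1}$; since $W_\mu$ is stable under left translations and the pullback $\iota^*$ sends $W_\mu$ onto $W_{\mu^*}$ (via the matrix-coefficient identification $W_\mu \cong V_\mu \otimes V_\mu^*$), this function lies in $W_{\mu^*}$. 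Since $\mu \mapsto \mu^*$ is an involution, the subspaces $W_\mu \otimes W_{\mu^*}$ for varying $\mu$ are pairwise distinct summands of $\C[\GL_n \times \GL_n] = \bigoplus_{\alpha, \beta} W_\alpha \otimes W_\beta$ and therefore linearly independent, which yields the key claim.

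The main obstacle will be identifying the precise isotypic component containing $Q_m(f_\mu)$ and, in particular, correctly handling the appearance of the contragredient $\mu^*$ on the $B$-side, which arises from the inversion built into $\Psi_{A, B}$ and forces a careful use of the involution $\iota^*$ on Peter-Weyl summands. Once this tensor factorization is in place, everything else reduces to standard Peter-Weyl bookkeeping.
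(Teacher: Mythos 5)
Your proof is correct, but it takes a genuinely different route from the paper's and in fact establishes a sharper result. The paper writes $f = \sum_{\lambda \in A} f_\lambda$ and proves the two containments $S_f \subset \Conv(\mathcal{S}_n \cdot A)$ and $A \subset \Conv(S_f)$; the nontrivial direction uses the partial order $\preccurlyeq$ developed in Lemma \ref{LEM:Characterisation of C(x)} and Proposition \ref{PRO:Order}. Given $\lambda \in A$, the paper picks a $\preccurlyeq$-maximal $\lambda_M \in A$ above $\lambda$; maximality gives $\lambda_M \notin C(\mu) = \sNewt(f_\mu)$ for every $\mu \in A\backslash\{\lambda_M\}$, so $Q_{\lambda_M}^{(f)} = Q_{\lambda_M}^{(f_{\lambda_M})} \neq 0$ with no cancellation possible at that one weight, whence $\lambda_M \in S_f$ and $\lambda \in C(\lambda_M) \subset \sNewt(f)$. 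You instead prove the stronger support-level equality $S_f = \bigcup_\lambda S_{f_\lambda}$ by locating each $Q_m^{(f_\mu)}$ in the isotypic summand $W_\mu \otimes W_{\mu^*}$ of $\C[\GL_n] \otimes \C[\GL_n] = \bigoplus_{\alpha,\beta} W_\alpha \otimes W_\beta$ (where $\mu^*$ denotes the dual highest weight); since these summands are in direct sum, the $Q_m^{(f_\mu)}$ for distinct $\mu$ can never cancel at \emph{any} $m$, not just at a carefully chosen maximal one. Your route bypasses the order $\preccurlyeq$ machinery entirely (so Lemma \ref{LEM:Characterisation of C(x)} and Proposition \ref{PRO:Order} would no longer be needed for this proposition), still leans on Proposition \ref{PRO:sNewt W_lambda} in the same way, and makes the Peter--Weyl structure of $\C[\GL_n \times \GL_n]$ do the work; the price is the careful handling of the contragredient involution $\iota^*\colon W_\mu \to W_{\mu^*}$ induced by matrix inversion, which you treat correctly.
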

\begin{proof}
Let $f \in \C[\GL_n]$. Write $\d f = \sum_{\lambda \in A} f_\lambda$ where $A \subset \Lambda$ is a finite set and for all $\lambda$ in $A$, $f_\lambda \in W_\lambda\backslash\{0\}$. Let us show that $\sNewt(f) = \Delta(f)$. As $\sNewt(f) = \Conv(S_f)$ and $\Delta(f) = \Conv(\mathcal{S}_n \cdot A)$ and they are both stable by permutation, it is enough to show that $S_f \subset \Conv(A)$ and $A \subset \Conv(S_f)$.\\

\noindent\framebox{$\subset$} If $\lambda \in S_f$. It means that $Q_\lambda^{(f)} \neq 0$ so $Q_\lambda^{(f_\mu)} \neq 0$ for at least one $\mu \in A$. It means that $\lambda \in \sNewt(f_\mu) = C(\mu)$ by the proposition \ref{PRO:sNewt W_lambda}. $\lambda \in C(\mu) \subset \Conv(A) = \Delta(f)$.\\

\noindent\framebox{$\supset$} If $\lambda \in A$, let $\lambda_M \in A$ such that $\lambda \preccurlyeq \lambda_M$ and $\lambda_M$ is a maximal element of $A$. Such a $\lambda_M$ exists because $A$ is finite. Let us show that $\lambda_M \in S_f$. Indeed, $Q_{\lambda_M}^{(f_{\lambda_M})} \neq 0$ and for all $\mu \in A\backslash\{\lambda_M\}$, $\lambda_M \not\preccurlyeq \mu$ because $\lambda_M$ is a maximal element of $A$, thus $\lambda_M \notin C(\mu) = \sNewt(f_\mu)$ so $Q_{\lambda_M}^{(f_\mu)} = 0$. It implies that $Q_{\lambda_M}^{(f)} = Q_{\lambda_M}^{(f_{\lambda_M})} \neq 0$. $\lambda_M \in S_f$. It proves that $\lambda \in C(\lambda_M) \subset \Conv(S_f) = \sNewt(f)$.
\end{proof}

\section{Tropical geometry}\label{SEC:Tropical}

The goal of the is section is to generalise the theorem due to Bergman \cite{Bergman} that makes the link between the tropical variety of an ideal and its amoeba. We will also recall the definition of the tropical variety of a matrix spherical variety $Y \subset \GL_n(\C)$ which is a particular case of the definition given by Tevelev and Vogiannou \cite{Tevelev}.

\subsection{Definitions and theorem in $(K^*)^n$}

Let $K$ be an algebraically closed field of characteristic 0 endowed with a non trivial valuation that is trivial on $\Q$. Let $I$ be a proper ideal of the Laurent polynomial ring $K[X^\pm] = K\!\left[X_1^\pm,\ldots,X_n^\pm\right]$. We define the algebraic variety $Y = Y(K)$ associated to $I$ as the set of non zero vectors $z \in (K^*)^n$ where all the polynomials of $I$ vanish. For every polynomial in $K[X^\pm]$, we define its tropical version as
\[
\trop\left(\sum_{m \in \Z^n} a_mX^m\right) = x \mapsto \min\{\val(a_m) + m \cdot x|m \in \Z^n\},
\]
which is a affine by part convex function from $\R^n$ to $\R$. Notice that the $\min$ convention has been used but a similar version with a $\max$ also exists. Introduction to Tropical Geometry, by MacLagan and Sturmfel \cite{MacLagan} is a good reference for tropical geometry. We define the tropical hypersurface of any tropical polynomial $P$ as the set of points where the minimum is reached at least twice \textit{i.e.} the set where it is not differentiable. We call it $\V(P)$. When $Y = \V(I) \subset (K^*)^n$ is a very affine variety, we have the fundamental theorem of tropical algebra,
\[
\bigcap_{f \in I} \V(\trop(f)) = \overline{\{\val(z_1),\ldots,\val(z_n)|z \in Y\}}.
\]
We call this set $\trop(Y)$. Notice that it only depends on $Y$ and not on the choice of $I$. There is a third definition using initial ideals (more \cite[Theorem 3.2.3]{MacLagan}). Assume now that $K = \overline{\mathcal{K}}$ the set of complex Puiseux series, endowed with a valuation,
\[
\val : \sum_{m \in S} a_mt^m \mapsto \min(S) \textrm{ where $S$ is non empty and the $a_m$ are non zero compex numbers,}
\]
and $\val(0) = +\infty$. This field is algebraically closed of characteristic 0 and the valuation is non trivial, but is trivial over $\Q$. Therefore, the previous theorem holds. We $I \subset \C[X^\pm]$, let $Y = Y(\C)$ be the variety associated to $I$ and $Y(\overline{\mathcal{K}}) \subset (\overline{\mathcal{K}}^*)^n$ be the variety associated with the ideal of $\overline{\mathcal{K}}[X^\pm]$ generated by the elements of $I$. We have the Bergman's theorem,
\[
\rho\A(Y) \tend{\rho}{0^+} -\trop(Y) \textrm{ in the sens of Kuratowski,}
\]
and $\trop(Y)$ is a finite union of polyhedral cones of codimension at least 1. Moreover, when $I = (f)$ is principal, $-\trop(Y)$ is the normal cone of the Newton polytope of $f$. See \cite[Section 2.3]{MacLagan} for an introduction to convex geometry and the definition of the normal cone of a convex polytope. Let us extend it to matrices.

\subsection{Definitions in $\GL_n(\C)$}

We will work exclusively on $\C$ and $\overline{\mathcal{K}}$ since the notions of singular values and invariant factors are hardly generalisable to any field.
\begin{definition}
    Let $\RR = \{z(t) \in \overline{\mathcal{K}}|\val(z(t)) \geq 0\}$.
\end{definition}
$\RR$ is a ring and its invertible elements are $\RR^\times = \{z(t) \in \overline{\mathcal{K}}|\val(z(t)) = 0\} = \{z(t) \in \RR|z(0) \neq 0\}$. Notice that $\frak{m} = \RR\backslash\RR^\times$ is an ideal, so $\RR$ is local with maximal ideal $\frak{m}$ and the residue field of $\overline{\mathcal{K}}$ is $\RR/\frak{m} = \C$. But the most important is that $\RR$ is integral with $\mathrm{Frac}(\RR) = \overline{\mathcal{K}}$. The natural way to extend to notion of coordinated-wise valuation of a vector of complex Puiseux series to matrices is to use the invariant factors given by the Smith normal form (we will see that it coincides with the definition given in \cite{Tevelev}). However, Smith's theorem requires to work on a principal ideal domain and $\RR$ is not principal (it is not even Noetherian as $\frak{m}$ is not finitely generated). We need to extend Smith's theorem.
\begin{proposition}[Smith normal form]
    For all Puiseux series matrix $A(t) \in \Mat_n(\overline{\mathcal{K}})$, there exists matrices $(P(t),Q(t)) \in \GL_n(\RR)$ and a diagonal matrix $D(t)$ such that $A(t) = P(t)D(t)Q^{-1}(t)$. Moreover, the valuations of the diagonal elements of $D(t)$ are unique up to permutation. We call them invariant factors.
\end{proposition}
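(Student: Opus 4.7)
The plan is to reduce the statement to the classical Smith normal form over a discrete valuation ring. The headline obstruction, namely that $\RR$ is non-Noetherian and hence not a principal ideal domain, evaporates once one notices that a single $n \times n$ matrix only involves finitely many Puiseux series, all of which live in a common much smaller subring that \emph{is} a discrete valuation ring.

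Concretely, I would first pick an integer $k \geq 1$ large enough that every entry of $A(t)$ lies in $\C((t^{1/k}))$; this is possible because $\overline{\mathcal{K}} = \bigcup_{k \geq 1} \C((t^{1/k}))$ and $A(t)$ has only $n^2$ entries. The valuation ring of $\C((t^{1/k}))$ is the formal power series ring $\C[[t^{1/k}]]$, a discrete valuation ring with uniformizer $t^{1/k}$, in particular a principal ideal domain; and the inclusion $\C[[t^{1/k}]] \subset \RR$ yields $\GL_n(\C[[t^{1/k}]]) \subset \GL_n(\RR)$. I would then clear denominators by choosing $N \in \N$ with $N/k \geq -\min_{i,j} \val(a_{ij}(t))$ so that $B(t) := t^{N/k} A(t)$ lies in $\Mat_n(\C[[t^{1/k}]])$, and apply the classical Smith normal form over this principal ideal domain to obtain $B(t) = P(t) D'(t) Q(t)^{-1}$ with $P, Q \in \GL_n(\C[[t^{1/k}]])$ and $D'(t)$ diagonal whose entries satisfy $d'_1 \mid d'_2 \mid \cdots \mid d'_n$. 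Setting $D(t) = t^{-N/k} D'(t)$ then yields the desired decomposition $A(t) = P(t) D(t) Q(t)^{-1}$ with $P, Q \in \GL_n(\RR)$.

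For uniqueness of the valuations of the diagonal entries up to permutation, I would invoke the determinantal-ideal characterization of invariant factors: in the principal ideal domain $\C[[t^{1/k}]]$ the ideal generated by the $j \times j$ minors of $B(t)$ equals $(d'_1 \cdots d'_j)$, and in a valuation ring the valuation of a generator of a principal ideal equals the minimum valuation over any generating set, giving
\[
\val(d'_1) + \cdots + \val(d'_j) = \min\bigl\{\val(\Delta) : \Delta \text{ is a } j \times j \text{ minor of } B(t)\bigr\}.
\]
Combined with the ordering $\val(d'_1) \leq \cdots \leq \val(d'_n)$ forced by divisibility, this determines every $\val(d'_i)$ and hence every $\val(d_i) = \val(d'_i) - N/k$. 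The main obstacle I anticipate is verifying intrinsicness: one must check that the minor formula is biinvariant under $\GL_n(\RR)$ (not merely under $\GL_n(\C[[t^{1/k}]])$) and that enlarging $k$ leaves valuations unchanged, so that the uniqueness statement concerns $A$ itself rather than the auxiliary choices of $k$ and $N$.
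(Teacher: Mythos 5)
Your existence argument is essentially the paper's. Both proofs pick a discretely valued subfield of $\overline{\mathcal K}$ large enough to contain the finitely many entries of $A(t)$, observe that its valuation ring is a principal ideal domain (a DVR, in fact), and apply classical Smith normal form there. The paper takes the subfield whose exponents lie in the subgroup $G \subset \Q$ generated by the exponents appearing in $A(t)$; you take $\C((t^{1/k}))$ for a common denominator $k$. Since any finitely generated subgroup of $\Q$ is cyclic, these are the same move. Your explicit clearing of denominators via $B(t) = t^{N/k}A(t)$ makes precise a step the paper leaves implicit when it applies Smith normal form to a matrix that is only over $K$, not over $R$.

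For uniqueness you genuinely diverge, and you correctly flag the gap in your own route. The paper handles a second decomposition $A = \tilde P\tilde D\tilde Q^{-1}$ with $\tilde P,\tilde Q \in \GL_n(\RR)$ by enlarging the exponent group to one $\tilde G$ containing the exponents of all six matrices, landing all of them in a single DVR, and invoking Smith uniqueness there — no direct argument about $\GL_n(\RR)$-invariance is needed. You instead propose the intrinsic determinantal characterization
\[
\val(d_1) + \cdots + \val(d_j) \;=\; \min\{\val(\Delta) : \Delta \text{ a } j\times j \text{ minor of } A(t)\},
\]
which, once established, yields an explicit formula for $\sval(A(t))$ rather than just a uniqueness statement; that is a real gain. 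What is missing is the biinvariance you anticipate: that the right-hand side is unchanged under $A \mapsto PA$ and $A \mapsto AQ^{-1}$ for $P,Q \in \GL_n(\RR)$, not merely for $P,Q \in \GL_n(\C[[t^{1/k}]])$. This does hold and is short: by the Cauchy--Binet formula the $j\times j$ minors of $PA$ are $\RR$-linear combinations of the $j\times j$ minors of $A$, so since elements of $\RR$ have nonnegative valuation the minimum valuation can only increase; applying the same to $P^{-1}(PA)$ gives the reverse inequality, and similarly on the right. The concern about enlarging $k$ is vacuous since the valuation on $\C[[t^{1/k}]]$ is the restriction of the valuation on $\overline{\mathcal K}$, so it does not depend on $k$. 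Writing out that Cauchy--Binet paragraph would complete your proof; as it stands the uniqueness half is a sketch with the central verification deferred.
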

\begin{proof}
Let $A(t)$ be a Pusieux series matrix and $G$ be the group generated by $\{q \in \Q|t^q \textrm{ appears in } A(t)\}$. Exponents in Puiseux series all have a common denominator and $A(t)$ has a finite number of coefficients so $G$ is discrete. Let $\d K = \left\{\sum_{g \in G} a_gt^g \in \overline{\mathcal{K}}\right\}$. $K$ is a field and the valuation inherited from the valuation on $\overline{\mathcal{K}}$ is discrete on $K$ because $\Gamma_{\val_{|K}} = G$. As $K$ is a field with a discrete valuation, the ring $R = \{z(t) \in K|\val(z(t)) \geq 0\} \subset \RR$ is a principal ideal domain. As all the coefficients of $A(t)$ belong to $K$, we can apply the Smith normal form theorem. There exists $(P(t),Q(t)) \in \GL_n(R) \subset \GL_n(\RR)$ and $D(t)$ a diagonal matrix such that $A(t) = P(t)D(t)Q^{-1}(t)$. In particular, the coefficients of $P(t)$ and $Q(t)$ are series whose exponents all are in $G \cap \R_+$.

If $A(t) = P(t)D(t)Q^{-1}(t) = \tilde{P}(t)\tilde{D}(t)\tilde{Q}^{-1}(t)$ with $\tilde{P}(t)$ and $\tilde{Q}(t)$ invertible in $\RR$ and $\tilde{D}(t)$ diagonal, we can use the uniqueness in the Smith normal form theorem in the field $\d \tilde{K} = \left\{\sum_{g \in \tilde{G}} a_gt^g \in \overline{\mathcal{K}}\right\}$ where $\tilde{G}$ is the group generated by the exponents of the coefficients of $P(t)$, $Q(t)$, $D(t)$, $\tilde{P}(t)$, $\tilde{Q}(t)$ and $\tilde{D}(t)$ to deduce that the valuation of the diagonal coefficients in $D(t)$ are the same than in $\tilde{D}(t)$ up to permutation. It proves the proposition.
\end{proof}
It allows us to define the matrix spherical valuation of a Puiseux series matrix $A(t)$,
\begin{definition}
    \[
    \sval : \fonction{\GL_n(\overline{\mathcal{K}})}{\Q^n/\mathcal{S}_n}{A(t) = P(t)\diag(z(t))Q^{-1}(t)}{(\val(z_1(t)),\ldots,\val(z_n(t)))}.
    \]
\end{definition}
Now, let $I$ be an ideal of the ring $\C[\GL_n]$. We define the spherical variety $Y(\C)$ associated to $I$ as the set of invertible matrices $A$ where all the polynomials of $I$ vanish and the spherical variety in Puiseux series $Y(\overline{\mathcal{K}})$ as the variety associated with the ideal generated by the elements of $I$ in $\overline{\mathcal{K}}[\GL_n]$. When there is no ambiguity, we can use the notation $Y$ to talk about $Y(\C)$ as well as $Y(\overline{\mathcal{K}})$. There does not seem to be a good generalisation of the tropicalization of a matrix polynomial so we shall define the spherical tropical variety of $Y$ thanks to the spherical valuation,
\begin{definition}
    \[
    \strop(Y) = \overline{\{\sval(A(t))|A(t) \in Y(\overline{\mathcal{K}})\}}
    \]
\end{definition}
where the set $\strop_\Q(Y) = \{\sval(A(t))|A(t) \in Y(\overline{\mathcal{K}})\} \subset \Q^n/\mathcal{S}_n$ is mistaken by abuse with the set of $x \in \Q^n$ such that the orbit of $x$ under $\mathcal{S}_n$ is in $\strop_\Q(Y)$. We can make the link with classical tropical variety, and thus use if necessary the fundamental theorem of tropical algebra. Notice that this definition actually coincides with the more general definition of tropical varieties of a spherical variety by Tevlev and Vogiannou \cite[Theorem 1.3]{Tevelev}.
\begin{proposition}\label{PRO:Union trop}
    Let $I$ be a proper ideal of $\C[\GL_n]$. For any matrices $P(t),Q(t)$ that are invertible in $\GL_n(\RR)$, and $f \in \C[\GL_n]$, we extend the definition of $\Psi$,
    \[
    \Psi_{P(t),Q(t)}(f) : \fonction{(\overline{\mathcal{K}}^*)^n}{\overline{\mathcal{K}}}{z(t)}{f(P(t)\diag(z(t))Q^{-1}(t))}.
    \]
    $\Psi_{P(t),Q(t)} : \overline{\mathcal{K}}[\GL_n] \rightarrow \overline{\mathcal{K}}[X^\pm]$ is a ring morphism. Let $Y_{P(t),Q(t)} = \V(\Psi_{P(t),Q(t)}(I))$. We have
    \[
    \strop(Y) = \overline{\bigcup_{(P(t),Q(t)) \in \GL_n(\RR)} \trop(Y_{P(t),Q(t)})}
    \]
\end{proposition}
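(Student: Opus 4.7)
The strategy mirrors the proof of Proposition \ref{PRO:Union amoebas}, with the Smith normal form over $\RR$ playing the role of the singular value decomposition. Since $\strop(Y)$ is by definition a closure, and since for every $(P(t),Q(t))$ the set $\trop(Y_{P(t),Q(t)})$ coincides with the closure of $\trop_\Q(Y_{P(t),Q(t)}) := \{\val(z(t)) \mid z(t) \in Y_{P(t),Q(t)}(\overline{\mathcal{K}})\}$ (by the fundamental theorem of tropical algebra recalled in the previous subsection), it is enough to establish the equality of rational-valued sets
\[
\strop_\Q(Y) = \bigcup_{(P(t),Q(t)) \in \GL_n(\RR)} \trop_\Q(Y_{P(t),Q(t)})
\]
and then take closures on both sides.

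For the inclusion $\subset$, let $x = \sval(A(t))$ with $A(t) \in Y(\overline{\mathcal{K}})$. Applying the Smith normal form just established produces matrices $P(t), Q(t) \in \GL_n(\RR)$ and a vector $z(t) \in (\overline{\mathcal{K}}^*)^n$ such that $A(t) = P(t)\,\diag(z(t))\,Q(t)^{-1}$ and $(\val(z_1(t)), \ldots, \val(z_n(t))) = x$ in $\Q^n/\mathcal{S}_n$. For every $f \in I$ one then has $\Psi_{P(t),Q(t)}(f)(z(t)) = f(A(t)) = 0$, so $z(t) \in Y_{P(t),Q(t)}(\overline{\mathcal{K}})$ and consequently $x \in \trop_\Q(Y_{P(t),Q(t)})$.

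For the reverse inclusion $\supset$, take $(P(t),Q(t)) \in \GL_n(\RR) \times \GL_n(\RR)$ together with $x = \val(z(t))$ for some $z(t) \in Y_{P(t),Q(t)}(\overline{\mathcal{K}})$, and set $A(t) := P(t)\,\diag(z(t))\,Q(t)^{-1}$. Because $P(t), Q(t)$ are invertible in $\GL_n(\RR) \subset \GL_n(\overline{\mathcal{K}})$ and $z(t)$ has no zero components, $A(t) \in \GL_n(\overline{\mathcal{K}})$; the identities $f(A(t)) = \Psi_{P(t),Q(t)}(f)(z(t)) = 0$ for all $f \in I$ give $A(t) \in Y(\overline{\mathcal{K}})$. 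The displayed factorization of $A(t)$ is itself a Smith normal form, so the uniqueness of invariant factors yields $\sval(A(t)) = x$ in $\Q^n/\mathcal{S}_n$, whence $x \in \strop_\Q(Y)$.

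The main obstacle I foresee is not a conceptual one, but rather the bookkeeping needed to pass back and forth between the rational-valued sets and their closures: one has to check that $\overline{\bigcup \trop_\Q(Y_{P(t),Q(t)})} = \overline{\bigcup \trop(Y_{P(t),Q(t)})}$, which follows from $\trop_\Q \subset \trop \subset \overline{\trop_\Q}$ applied summand-wise. A secondary subtlety is the invertibility of $A(t)$ in the $\supset$ direction, which is automatic because $Y_{P(t),Q(t)} \subset (\overline{\mathcal{K}}^*)^n$ forces each $z_i(t)$ to be nonzero. Beyond Proposition \ref{PRO:Union amoebas} and the Smith normal form over $\RR$, no new ingredients are needed.
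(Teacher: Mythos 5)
Your proof is correct and takes essentially the same approach as the paper: decompose $A(t) = P(t)\,\diag(z(t))\,Q(t)^{-1}$ via the Smith normal form over $\RR$, unwind the definitions to obtain $\strop_\Q(Y) = \bigcup_{(P(t),Q(t))} \trop_\Q(Y_{P(t),Q(t)})$, and then pass to closures. You are slightly more careful than the paper about the final closure bookkeeping (verifying that $\overline{\bigcup \trop_\Q(Y_{P(t),Q(t)})} = \overline{\bigcup \trop(Y_{P(t),Q(t)})}$), but the underlying argument is the same.
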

\begin{proof}
Let $x \in \Q^n$.
\begin{align*}
    x \in \strop_\Q(Y) & \Leftrightarrow x = \sval(A(t)) \textrm{ for some } A(t) \in Y.\\
    & \Leftrightarrow f(P(t)\diag(z(t))Q^{-1}(t)) = 0 \textrm{ for some } (P(t),Q(t)) \in \GL_n(\RR)^2 \textrm{ and for all $k$, } \val(z_k(t)) = x_k.\\
    & \Leftrightarrow x \in \trop_\Q(Y_{P(t),Q(t)}) \textrm{ for some } (P(t),Q(t)) \in \GL_n(\RR)^2\\
    & \Leftrightarrow x \in \bigcup_{(P(t),Q(t)) \in \GL_n(\RR)} \trop_\Q(Y_{P(t),Q(t)}),
\end{align*}
which proves the proposition, by taking the closure.
\end{proof}
As for classical amoebas, we conjecture that for every spherical variety $Y$,
\begin{conjecture}
    \[
    \rho\sA(Y) \tend{\rho}{0^+} -\strop(Y) \textrm{ in the sens of Kuratowski.}
    \]
\end{conjecture}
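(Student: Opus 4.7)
The plan is to prove Kuratowski convergence as two separate inclusions:
\begin{align*}
\text{(i)} &\quad -\strop(Y) \subset \liminf_{\rho \to 0^+} \rho\, \sA(Y),\\
\text{(ii)} &\quad \limsup_{\rho \to 0^+} \rho\, \sA(Y) \subset -\strop(Y).
\end{align*}
Direction (i) is essentially the content of the partial result announced in the introduction. Given a rational $x \in \strop_\Q(Y)$, lift it to $A(t) \in Y(\overline{\mathcal{K}})$ with $\sval(A(t)) = x$ and apply the Smith normal form $A(t) = P(t)\, D(t)\, Q(t)^{-1}$, with $(P, Q) \in \GL_n(\RR)^2$ and $D = \diag(d_i)$, $\val(d_i) = x_i$ up to reordering. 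After truncating to a convergent Puiseux polynomial, specialize $t$ to a small positive real $r$; then $P(r), Q(r)$ converge to invertible complex matrices as $r \to 0^+$, and the multiplicative singular value inequalities yield $\ln \lambda_i(A(r)) = x_i \ln r + \mathrm{O}(1)$, whence $\rho\, \sLog(A(r)) \to -x$ for $\rho := -1/\ln r$. Density of $\strop_\Q(Y)$ in $\strop(Y)$ together with the closedness of $\sA$ handle the irrational points.

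Direction (ii) is the substance of the conjecture. Suppose $A_k \in Y$ and $\rho_k \to 0^+$ satisfy $\rho_k\, \sLog(A_k) \to x$, and write SVDs $A_k = U_k\,\diag(\lambda^{(k)})\,V_k^*$. By compactness of $\U(n)^2$, pass to a subsequence with $(U_k, V_k) \to (U_\infty, V_\infty)$. The triples $(U_k, V_k, \lambda^{(k)})$ lie in the real semi-algebraic set
\[
Z = \{(U, V, \lambda) \in \U(n) \times \U(n) \times \R_{>0}^n \;:\; U\, \diag(\lambda)\, V^* \in Y\},
\]
and the goal is to realize $x$ as an asymptotic direction of $Z$ via a semi-algebraic curve. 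The plan is to invoke a Milnor-style curve selection lemma at infinity for $Z$ (after an appropriate inversion or compactification in the $\lambda$ direction): this produces a real-analytic arc $s \mapsto (U(s), V(s), \lambda(s))$ in $Z$, $s \in (0, \varepsilon)$, with coordinates given by real Puiseux series in $s$, such that $\Log \lambda(s)/\ln s \to -x$ as $s \to 0^+$. Clearing a common denominator $N$ in the exponents, the change of variables $t = s^N$ gives $U(t), V(t) \in \GL_n(\RR)$ with $U(0), V(0) \in \U(n)$ and $\lambda(t) \in (\overline{\mathcal{K}}^*)^n$ satisfying $\val(\lambda_i(t)) = -x_i$ up to permutation. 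Then $A(t) = U(t)\,\diag(\lambda(t))\,V(t)^* \in Y(\overline{\mathcal{K}})$, and since $U(t), V(t)$ have valuation zero the Smith invariants of $A(t)$ coincide with those of $\diag(\lambda(t))$, so $\sval(A(t)) = -x$ and $x \in -\strop(Y)$.

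The main obstacle is the curve selection step: the discrete sequence $(U_k, V_k, \lambda^{(k)}, \rho_k)$ carries no algebraic structure in $k$, so the arc through it must be produced non-constructively. The classical torus analogue is Jonsson's theorem \cite{Jonsson}, whose proof uses delicate tropical and non-Archimedean analytic machinery; adapting it to $\GL_n(\C)$ introduces two extra subtleties. First, the SVD factors $(U_k, V_k)$ are not uniquely determined when $A_\infty$ has repeated singular values, so the extraction of a convergent subsequence has to be done compatibly with the Puiseux lift that one eventually produces. Second, $Z$ is cut out by algebraic equations from $I$ together with the real-analytic unitarity constraint, so the Puiseux construction must stay inside the unitary locus throughout: the naive Zariski closure $\GL_n(\C)$ of $\U(n)$ is too large for the Smith argument to force $\val = 0$ on the factors $U(t), V(t)$, which is exactly what makes $\sval(A(t))$ readable off the diagonal.
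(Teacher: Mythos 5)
Be aware that this statement is labelled a \emph{conjecture} in the paper and is not proved there in general. What the paper establishes is: (a) the inclusion $-\strop(Y) \subset \liminf_{\rho\to 0^+}\rho\,\sA(Y)$ for arbitrary $Y$ (Corollary \ref{COR:Inclusion1}), and (b) the full Kuratowski limit only when $Y=\V(f)$ is a hypersurface (Theorem \ref{THE:Bergman hypersurface}), via the Ronkin function, the order map $\nu_f$, and the explicit identification of $-\strop(\V(f))$ with $\R^n\setminus(C^-\cup C^+)$. Your direction (i) reproduces the paper's Corollary \ref{COR:Inclusion1}: Smith normal form over $\RR$, truncation to a convergent Puiseux representative, specialization to real $t\to 0^+$, and \cite[Theorem 1.1]{Kaveh} to control $\sLog$. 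That part is fine and matches.

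Direction (ii) attacks the genuinely open part, and the gap there is substantive, not cosmetic. You reduce to producing a semi-algebraic arc $s\mapsto(U(s),V(s),\lambda(s))$ in $Z$ with $\Log\lambda(s)/\ln s\to -x$. But curve selection, in any of its standard forms (Milnor, or Bochnak--Coste--Roy), selects arcs converging to a \emph{point} of a suitable compactification; the constraint you actually need is on the \emph{logarithmic rate} at which $\lambda(s)$ degenerates, which is not a semi-algebraic condition on $(U,V,\lambda)$ and is not encoded by a single boundary point of a compactification of $\R_{>0}^n$. The machinery that handles this is the theory of logarithmic limit sets of real semi-algebraic sets (due to Alessandrini) or, in the complex algebraic case, the non-Archimedean analytic argument of \cite{Jonsson}; reducing your set $Z$ to either of these frameworks is precisely the work to be done, not a routine citation. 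You flag this obstruction yourself, which is honest, but as written the step from ``$x\in\limsup_\rho\rho\,\sA(Y)$'' to ``there exists $A(t)\in Y(\overline{\mathcal{K}})$ with $\sval(A(t))=-x$'' is missing, and the two further subtleties you list (non-uniqueness of SVD factors near repeated singular values, and keeping the arc inside the unitary locus so that the $\GL_n(\RR)$-factor argument pins down $\sval$) are downstream of that missing existence statement. Note also that the paper's own argument in the hypersurface case takes an entirely different route: it never invokes curve selection, instead bounding $\limsup_\rho\rho\,\sA(f)$ from above by $\R^n\setminus(C^-\cup C^+)$ using the recession-cone description of the complement components (Propositions \ref{PRO:Image nu, vertices} and \ref{PRO:Maximal cone}), and from below by explicitly constructing Puiseux-series matrices via Lemma \ref{LEM:Tropical shift} and Propositions \ref{PRO:Inclusion3, case 1}--\ref{PRO:Inclusion3, case 2}; that approach is intrinsically tied to the codimension-one structure and does not obviously generalize.
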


\subsection{A first inlcusion}

Recall the definition of Kuratowski limit : if $(E_m)_{m \in \N}$ is a family of subsets of a topological space $E$ (we can replace the discrete $m \in \N$ by a continuous variable that converge, or diverges toward $+\infty$ or $-\infty$),
\[
\liminf_{m \rightarrow +\infty} E_m = \{x \in E|\forall U \textrm{ neighborhood of $x$ }, \exists m_0 \in \N, \forall m \geq m_0, U \cap E_m \neq \O\},
\]
\[
\limsup_{m \rightarrow +\infty} E_m = \{x \in E|\forall U \textrm{ neighborhood of $x$}, \forall m_0 \in \N, \exists m \geq m_0, U \cap E_m \neq \O\}
\]
In particular, $\d \liminf_{m \rightarrow +\infty} E_m \subset \limsup_{m \rightarrow +\infty} E_m$ and when they are equal, we call $\d \lim_{m \rightarrow +\infty} E_m$ the common limit. Therefore, given a variety $Y$, the conjecture is equivalent to
\[
\limsup_{\rho \rightarrow 0^+} \rho\sA(Y) \subset -\strop(Y) \subset \liminf_{\rho \rightarrow 0^+} \rho\sA(Y).
\]
First of all, let us show the second inclusion, which is the easiest. Consider for any rational number $q$, the truncation under $q$,
\begin{definition}
    \[
    T_q : \fonction{\overline{\mathcal{K}}}{\overline{\mathcal{K}}}{\sum_{m \in S} a_mt^m}{\sum_{\underset{m \leq q}{m \in S}}{a_mt^m}}
    \]
\end{definition}
and extend it to $\Mat_n(\C)$ by truncating each coefficient. It is in both cases a $\C$-linear map. It is clear that $\RR$ is stable under $T_q$ and for all rational number $q \geq 0$, $(t^q)$ is an ideal such that for all $a(t) \in \RR$, $T_qa(t) \equiv a(t)\ [t^q]$. This remains true if we replace $\RR$ by $\Mat_n(\RR)$.
\begin{lemma}\label{LEM:Domination invariant factors}
    For any matrices $A(t) \in \GL_n(\overline{\mathcal{K}})$ and $B(t) \in \Mat_n(\overline{\mathcal{K}})$, if all the invariant factors of $A(t)$ are less or equal than all the invariant factors of $B(t)$, then $B(t)A^{-1}(t)$ has its coefficients in $\RR$.
    
    Moreover, if we replace "less or equal" by "less", $(A(t) + B(t))A^{-1}(t) \in \GL_n(\RR)$.
\end{lemma}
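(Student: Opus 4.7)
My approach is to combine the Smith normal form proposition above with entry-by-entry valuation bookkeeping. First I would invoke Smith normal form to write $A(t) = P\,\diag(\alpha_1,\ldots,\alpha_n)\,Q^{-1}$ and $B(t) = P'\,\diag(\beta_1,\ldots,\beta_n)\,(Q')^{-1}$ with $P,Q,P',Q' \in \GL_n(\RR)$. Setting $a_i = \val(\alpha_i)$ and $b_i = \val(\beta_i)$, the $a_i$ are all finite because $A(t)$ is invertible in $\GL_n(\overline{\mathcal{K}})$, whereas some $b_i$ may be $+\infty$ if $B(t)$ is singular. The hypothesis then reads $a_i \leq b_j$ for all $i,j$ (or $a_i < b_j$ in the strict version).

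Next I would compute
\[
B(t)\,A(t)^{-1} = P'\,\diag(\beta)\,M\,\diag(\alpha)^{-1}\,P^{-1}, \qquad M := (Q')^{-1}Q \in \GL_n(\RR).
\]
The $(i,j)$-entry of the middle factor $\diag(\beta)\,M\,\diag(\alpha)^{-1}$ equals $\beta_i\,M_{ij}\,\alpha_j^{-1}$, whose valuation is at least $b_i + \val(M_{ij}) - a_j \geq b_i - a_j \geq 0$ (and the entry simply vanishes if $\beta_i = 0$) because $M_{ij} \in \RR$. Hence the middle factor lies in $\Mat_n(\RR)$, and multiplying on the left by $P'$ and on the right by $P^{-1}$, both of which lie in $\Mat_n(\RR)$, keeps us inside $\Mat_n(\RR)$. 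This would prove the first assertion.

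For the \emph{moreover}, under the strict inequality each entry of the middle factor lies in $\frak{m}$, so the same multiplicative argument gives $B(t)\,A(t)^{-1} \in \Mat_n(\frak{m})$. Thus $(A(t)+B(t))\,A(t)^{-1} = I_n + N$ with $N \in \Mat_n(\frak{m})$, and I would finish by expanding $\det(I_n + N)$ as a sum over permutations: the identity permutation contributes $\prod_i(1 + N_{ii}) \in 1 + \frak{m}$, and every non-identity permutation contains at least one off-diagonal factor $N_{i,\sigma(i)} \in \frak{m}$. Thus $\det(I_n + N) \in 1 + \frak{m} \subset \RR^\times$, which gives $I_n + N \in \GL_n(\RR)$.

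The argument is really just careful bookkeeping with valuations, so I do not expect any real obstacle. The only mild subtlety is handling the case where $B(t)$ is singular (some $\beta_i$ vanish, i.e.\ have $+\infty$ valuation), but this is absorbed by the observation that the corresponding rows of the middle factor are zero and cause no trouble.
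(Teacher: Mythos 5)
Your proof is correct. Like the paper's argument it rests on the Smith normal form together with valuation bookkeeping, but the mechanics differ at both steps. For the first assertion the paper does not decompose both matrices: it picks a rational $q$ separating the invariant factors of $A(t)$ from those of $B(t)$ and writes $B(t)A^{-1}(t) = \left(t^{-q}B(t)\right)\left(t^{q}A^{-1}(t)\right)$, each factor lying in $\Mat_n(\RR)$ because its invariant factors are nonnegative; your sandwich $P'\,\diag(\beta)\,M\,\diag(\alpha)^{-1}\,P^{-1}$ is a more explicit, entrywise version of the same observation. For the ``moreover'' part the paper reduces to the first assertion applied to $A(t)$ and $t^{-\varepsilon}B(t)$ to place $B(t)A^{-1}(t)$ in the ideal $(t^{\varepsilon})\subset \Mat_n(\RR)$, and then produces an explicit inverse of $I_n + B(t)A^{-1}(t)$ via the Neumann series $\sum_{m}(-1)^m\left(B(t)A^{-1}(t)\right)^m$; you instead show $\det(I_n+N)\in 1+\frak{m}\subset\RR^\times$ and invoke the adjugate. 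Both routes are valid: the Neumann-series argument exhibits the inverse directly inside $\Mat_n(\RR)$, while yours is slightly more elementary. One step you should make explicit is that $\Mat_n(\frak{m})$ is a two-sided ideal of $\Mat_n(\RR)$ (this follows because $\frak{m}$ is an ideal of $\RR$), so that multiplying the middle factor on the left by $P'$ and on the right by $P^{-1}$ keeps it in $\Mat_n(\frak{m})$, as your argument requires.
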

\begin{proof}
Let $q$ be a rational number such that all the invariant factors of $A(t)$ are less or equal than $q$ and all the invariant factors of $B(t)$ are greater or equal than $q$. Therefore, $t^{-q}B(t) \in \Mat_n(\RR)$ and the invariant factors of $A^{-1}(t)$ are opposite to the invariant factors of $A(t)$ thus $t^qA^{-1}(t) \in \Mat_n(\RR)$ so $B(t)A^{-1}(t) = t^{-q}B(t)t^qA^{-1}(t)$ which has coefficients in $\RR$.

Now, if we replace "less or equal" by "less", by using the first part of the lemma with $A(t)$ and $t^{-\varepsilon}B(t)$ for a small enough $\varepsilon > 0$, the matrix $B(t)A^{-1}(t)$ is in the ideal $(t^\varepsilon)$ of the ring $\Mat_n(\RR)$. Let $\d S(t) = \sum_{m \in \N} (-1)^m(B(t)A^{-1}(t))^m$. As $B(t)A^{-1}(t) \in (t^\varepsilon)$, this series converges in $\Mat_n(\RR)$ and it is clear that $S(t)(I_n + B(t)A^{-1}(t)) = I_n$. It implies that $I_n + B(t)A^{-1}(t) = (A(t) + B(t))A^{-1}(t)$ is invertible in $\RR$.
\end{proof}
\begin{lemma}
    For any $A(t) \in \GL_n(\overline{\mathcal{K}})$, for any $q \in \Q$ greater than every invariant factor of $A(t)$, $T_qA(t)$ is invertible and $\sval(T_qA(t)) = \sval(A(t))$.
\end{lemma}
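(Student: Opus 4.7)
My plan is to reduce to the strict half of Lemma~\ref{LEM:Domination invariant factors}, applied to $A(t)$ and $B(t) := T_q A(t) - A(t)$. Since $A(t) + B(t) = T_q A(t)$, the lemma (once its hypothesis is checked) gives $T_q A(t) \cdot A(t)^{-1} \in \GL_n(\RR)$ directly. In particular $T_q A(t)$ is invertible. Writing $A(t) = P(t) D(t) Q(t)^{-1}$ in Smith normal form, I would then rewrite
$$
T_q A(t) = \bigl(T_q A(t) \cdot A(t)^{-1} \cdot P(t)\bigr)\, D(t)\, Q(t)^{-1},
$$
where the bracketed matrix is a product of two elements of $\GL_n(\RR)$, hence itself in $\GL_n(\RR)$. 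This is a Smith-type factorisation of $T_q A(t)$ with the same diagonal $D(t)$, so by the uniqueness of invariant factors proved in the Smith normal form proposition, $\sval(T_q A(t)) = \sval(A(t))$.

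The one thing to verify is therefore that every invariant factor of $B(t)$ is strictly greater than every invariant factor of $A(t)$. By hypothesis, every invariant factor of $A(t)$ is $< q$, so it suffices to show that every invariant factor of $B(t)$ is $> q$. Each entry of $B(t)$ is a Puiseux tail $-\sum_{m > q} a_{ij,m}\, t^m$ of an entry of $A(t)$; since all the exponents appearing in $A(t)$ share a common denominator $N$, such a tail either vanishes or has valuation at least $q + 1/N$. Taking the minimum over the finitely many entries, I obtain a uniform $\varepsilon > 0$ with $\val(B_{ij}(t)) \geq q + \varepsilon$ for every $i,j$.

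To convert this into a bound on invariant factors I would work inside the discretely-valued subfield $K \subset \overline{\mathcal{K}}$ used in the proof of the Smith normal form proposition, enlarged if necessary to contain the entries of both $A(t)$ and $B(t)$. Over such a field the smallest invariant factor of $B(t)$ equals $\min_{i,j} \val(B_{ij}(t))$ and the others are at least as large, so all invariant factors of $B(t)$ are $\geq q + \varepsilon > q$. This entry-valuation argument is the only mildly delicate step; once it is in place the preceding lemma closes the argument and no further computation is needed.
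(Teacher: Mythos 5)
Your proof is correct, and it takes a genuinely simpler route than the paper's. The paper builds an intermediate matrix $\tilde{A}(t) = T_q P(t)\,D(t)\,T_q Q^{-1}(t)$ by truncating the two Smith factors of $t^{-v_{\min}}A(t)$ separately, checks that truncation preserves invertibility in $\RR$, and only then compares $T_q A(t)$ to $\tilde{A}(t)$ via Lemma~\ref{LEM:Domination invariant factors}. You skip the proxy entirely: you set $B(t) = T_q A(t) - A(t)$, observe that every entry of $B(t)$ is a tail with valuation strictly greater than $q$ (uniformly, by the common-denominator argument), so every invariant factor of $B(t)$ is $> q$ and hence strictly exceeds every invariant factor of $A(t)$, and then the strict half of Lemma~\ref{LEM:Domination invariant factors} gives $T_q A(t)\,A(t)^{-1} \in \GL_n(\RR)$ in one step. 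Composing with a Smith factorisation of $A(t)$ and invoking uniqueness finishes it. In effect you prove the lemma by the same one-line argument the paper invokes for Corollary~\ref{COR:Same invariant factors} ("by the same reasoning"), specialised to $B(t) = T_q A(t)$; this eliminates the paper's intermediate object and the bookkeeping around $T_q P(t)$ and $T_q Q^{-1}(t)$. The only mildly delicate step — that over a discretely valued ring the smallest invariant factor of $B(t)$ equals $\min_{i,j}\val(B_{ij}(t))$ (it is the valuation of the gcd of the entries), so an entrywise lower bound transfers to invariant factors — is exactly what you flag and handle correctly, including the passage to a discretely valued subfield so that a Smith normal form exists.
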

\begin{proof}
If $A(t) \in \Mat_n(\overline{\mathcal{K}})$, $v_{\min}$ and $v_{\max}$ be its smallest and the biggest invariant factors. We know that $t^{-v_{\min}}A(t) \in \Mat_n(\RR)$ so we can write it as
\[
t^{-v_{\min}}A(t) = P(t)\diag(t^{v_1 - v_{\min}},\ldots,t^{v_n - v_{\min}})Q^{-1}(t),
\]
where $P(t)$ and $Q(t)$ are both invertible in $\RR$ and $\sval(A(t)) = (v_1,\ldots,v_n)$. Moreover, if $q = v_{\max} - v_{\min} + \varepsilon$ for some positive rational $\varepsilon$, $T_q\diag(t^{v_1 - v_{\min}},\ldots,t^{v_n - v_{\min}}) = \diag(t^{v_1 - v_{\min}},\ldots,t^{v_n - v_{\min}})$ so
\[
t^{-v_{\min}}T_{q + v_{\min}}A(t) = T_q(t^{-v_{\min}}A(t)) \equiv T_qP(t)\diag(t^{v_1 - v_{\min}},\ldots,t^{v_n - v_{\min}})T_qQ^{-1}(t)\ [t^q]
\]
and $q + v_{\min} = v_{\max} + \varepsilon$ so by multiplying the previous equality by $t^{v_{\min}}$,
\[
T_{v_{\max} + \varepsilon}A(t) = T_qP(t)\diag(t^{v_1},\ldots,t^{v_n})T_qQ^{-1}(t) + B(t)
\]
where $B(t) \in (t^{v_{\max} + \varepsilon})$. Let $\tilde{A}(t) = T_qP(t)\diag(v_1,\ldots,v_n)T_qQ^{-1}(t)$. $q$ is positive by definition so we have that $\det(T_q(P(t))(0)) = \det(P(0)) \in \C^*$ because $P$ is invertible in $\RR$. It implies that $T_qP(t)$ is also invertible in $\RR$. Same thing with $Q^{-1}(t)$.

Therefore, $\tilde{A}(t)$ is invertible and its invariant factors are the $v_k$ which are all dominated by $v_{\max}$, itself strictly dominated by all the invariant factors of $B(t)$. By the lemma \ref{LEM:Domination invariant factors}, $T_{v_{\max} + \varepsilon}A(t) = \tilde{A}(t) + B(t)$ and $\tilde{A}(t)$ have the same image in $\RR$ up to isomorphism so they have the same invariant factors, which proves the lemma.
\end{proof}
Using the same reasoning with the lemma \ref{LEM:Domination invariant factors}, we deduce that,
\begin{corollary}\label{COR:Same invariant factors}
    For any $(A(t),B(t)) \in \GL_n(\overline{\mathcal{K}})^2$, for any $q \in \Q$ greater than every invariant factor of $A(t)$, if $A(t) \equiv B(t)\ [t^q]$, $B(t)$ is invertible and $\sval(B(t)) = \sval(A(t))$.
\end{corollary}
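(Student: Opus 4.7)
My plan is to deduce the corollary directly from Lemma \ref{LEM:Domination invariant factors}, with $B(t) - A(t)$ playing the role of the "small" matrix.

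First, I would set $C(t) = B(t) - A(t)$. The congruence $A(t) \equiv B(t)\ [t^q]$ means that every entry of $C(t)$ lies in the ideal $(t^q)$, so $t^{-q} C(t) \in \Mat_n(\RR)$. Hence every invariant factor of $C(t)$ is at least $q$, and by the hypothesis on $q$, every invariant factor of $A(t)$ is strictly less than every invariant factor of $C(t)$.

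Next, I would apply the second part of Lemma \ref{LEM:Domination invariant factors} to the pair $(A(t), C(t))$. It yields that $(A(t) + C(t)) A(t)^{-1} = B(t) A(t)^{-1}$ belongs to $\GL_n(\RR)$. In particular $B(t) = \bigl(B(t) A(t)^{-1}\bigr) A(t)$ is a product of invertible matrices in $\GL_n(\overline{\mathcal{K}})$, so $B(t)$ is invertible.

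Finally, to identify the invariant factors, I would write a Smith normal form $A(t) = P(t) \diag(t^{v_1}, \ldots, t^{v_n}) Q(t)^{-1}$ with $P(t), Q(t) \in \GL_n(\RR)$, where $(v_1, \ldots, v_n) = \sval(A(t))$. Setting $M(t) = B(t) A(t)^{-1} \in \GL_n(\RR)$, we get
\[
B(t) = M(t) P(t) \diag(t^{v_1}, \ldots, t^{v_n}) Q(t)^{-1},
\]
and $M(t) P(t) \in \GL_n(\RR)$. By the uniqueness part of the Smith normal form, $\sval(B(t)) = (v_1, \ldots, v_n) = \sval(A(t))$.

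No step looks to be a genuine obstacle: the main content has already been packaged into Lemma \ref{LEM:Domination invariant factors}. The only thing to be slightly careful about is making sure the invariant factors of $C(t)$ are $\geq q$ (not just that its entries have valuation $\geq q$), but this follows immediately from $t^{-q} C(t) \in \Mat_n(\RR)$.
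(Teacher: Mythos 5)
Your argument is correct and follows essentially the same route as the paper: the paper derives this corollary by "the same reasoning" as the truncation lemma, namely applying Lemma \ref{LEM:Domination invariant factors} to conclude that $B(t)A(t)^{-1} \in \GL_n(\RR)$, so that $B(t)$ and $A(t)$ differ by a unit in $\GL_n(\RR)$ and hence share a Smith normal form. Your write-up just makes that reasoning explicit (including the reduction from "$t^{-q}C(t) \in \Mat_n(\RR)$" to "invariant factors of $C(t) \geq q$", and absorbing the unit parts of the diagonal entries into $Q(t)$ to get pure powers of $t$), which is exactly what the paper's elliptical "same reasoning" intends.
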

\begin{lemma}\label{LEM:Field extension}
    Let $F/K$ be a field extension with $K$ and $F$ both algebraically closed. Let $I_F \subset F[X_1^\pm,\ldots,X_n^\pm]$ and $I_K = K[X_1^\pm,\ldots,X_n^\pm]I_F$ which is an ideal of $K[X_1^\pm,\ldots,X_n^\pm]$. If $\val : K \rightarrow \R$ is a valuation, such as $\val(K) = \val(F)$ (we call $\Gamma$ this dense subgroup of $\R$), $\trop(\V(I_F)) = \trop(\V(I_K))$.
\end{lemma}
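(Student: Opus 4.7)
The plan is to derive both inclusions from two independent observations: the behaviour of zeros under extension of scalars, and the fundamental theorem of tropical algebra applied to both ideals. Interpreting the setup, for $I_K = K[X^\pm]\,I_F$ to be a well-defined ideal of $K[X^\pm]$ one needs $I_F \subseteq K[X^\pm]$, so the intended inclusion is $F \subseteq K$, and then $I_F \subseteq I_K$.

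First I would show $\trop(\V(I_F)) \subseteq \trop(\V(I_K))$ by a direct pointwise argument. Take $z \in \V(I_F) \subseteq (F^*)^n$ and view it in $(K^*)^n$ through the inclusion $F \hookrightarrow K$. Every element of $I_K$ is a $K[X^\pm]$-linear combination of elements of $I_F$, so it vanishes at $z$, giving $z \in \V(I_K)$. Since the valuation on $K$ restricts to the valuation on $F$, the coordinatewise valuations of $z$ are unchanged; taking the closure in $\R^n$ gives the inclusion.

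For the reverse inclusion I would invoke the fundamental theorem recalled in the excerpt, applied in each of the two algebraically closed non-trivially valued fields,
\[
\trop(\V(I_F)) = \bigcap_{f \in I_F} \V(\trop(f)), \qquad \trop(\V(I_K)) = \bigcap_{f \in I_K} \V(\trop(f)).
\]
The key point is that $\V(\trop(f)) \subseteq \R^n$ depends only on the valuations of the coefficients of $f$; since these lie in $\Gamma = \val(F) = \val(K)$, the subset $\V(\trop(f))$ is the same whether $f$ is regarded in $F[X^\pm]$ or in $K[X^\pm]$. From $I_F \subseteq I_K$, intersecting over the larger family gives the smaller set, and we deduce $\trop(\V(I_K)) \subseteq \trop(\V(I_F))$.

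The only real obstacle is making sure the hypotheses of the fundamental theorem are in force on both sides (algebraic closedness and non-triviality of the valuation), which are both part of the hypotheses, together with the equality of value groups $\val(F)=\val(K)=\Gamma$ that makes the behaviour of $\V(\trop(f))$ under base change invisible. No approximation or lifting argument should be required beyond that.
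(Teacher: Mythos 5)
Your proof is correct and takes essentially the same route as the paper: the inclusion $\trop(\V(I_F)) \subseteq \trop(\V(I_K))$ by the same pointwise argument on zeros under the inclusion of fields, and the reverse by applying the fundamental theorem in both valued fields to write each tropicalization as $\bigcap_f \V(\trop(f))$ and then using $I_F \subseteq I_K$. You also correctly diagnosed that the intended containment of fields is the one making $I_F \subseteq K[X^\pm]$, matching how the paper actually uses the lemma.
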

\begin{proof}\ \\
\framebox{$\subset$} If $x \in \trop_\Gamma(\V(I_F))$, for all $1 \leq k \leq n$, $x_k = \val(z_k)$ for some $z \in \V(I_F) \subset (F^*)^n$. $I_K$ is generated by $I_F$ so $z \in \V(I_K)$ thus $x \in \trop_\Gamma(\V(I_K))$.\\

\noindent\framebox{$\supset$} If $x \in \trop_\Gamma(\V(I_K))$, by the fundamental theorem of tropical geometry, $\d x \in \Gamma \cap \bigcap_{f \in I_K} \V(\trop(I_K))$. As $I_F \subset I_K$, $\d x \in \Gamma \cap \bigcap_{f \in I_K} \V(\trop(I_K)) = \trop_\Gamma(\V(I_F))$.\\

We proved that $\trop_\Gamma(\V(I_F)) = \trop_\Gamma(\V(I_K))$ thus $\trop(\V(I_F)) = \trop(\V(I_K))$ by taking the closure.
\end{proof}
Now, let us prove the last lemma we need for the first inclusion,
\begin{lemma}\label{LEM:Convergent matrix}
    For any ideal $I \subset \C[\GL_n]$, for any $A(t) \in \V(I)$, there exists a $B(t) \in \GL_n(F) \cap \V(I)$ that verifies $\sval(A(t)) = \sval(B(t))$ where $F$ is the set of Puiseux series which converge in $]0,\varepsilon[$ for some $\varepsilon > 0$.
\end{lemma}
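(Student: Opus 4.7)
The plan is to approximate $A(t)$ by a convergent Puiseux series matrix $B(t)$ satisfying the same defining equations of $\V(I)$, and close enough to $A(t)$ in the $t$-adic topology that Corollary \ref{COR:Same invariant factors} forces $\sval(B(t)) = \sval(A(t))$. The key ingredient is Artin's approximation theorem for the convergent power series ring $\C\{s\}$.

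Since $A(t)$ has only finitely many entries, there is a common denominator $k$ so that all entries of $A(t)$ and of $\det(A(t))^{-1}$ lie in $\C((s))$ with $s = t^{1/k}$. Let $v_{\max}$ be the largest invariant factor of $A(t)$ and fix a rational number $q > v_{\max}$. Because $\C[\GL_n]$ is Noetherian, $I$ is finitely generated; via the presentation $\C[\GL_n] \cong \C[a_{ij},w]/(\det(a_{ij})w - 1)$, the condition $A(t) \in \V(I)$ becomes a finite polynomial system with $\C$-coefficients in the unknowns $(a_{ij}, w)$, solved by the tuple $(a_{ij}(t), \det(A(t))^{-1}) \in \C((s))^{n^2+1}$.

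To invoke Artin's theorem, substitute $a_{ij} = s^{-M} \tilde{a}_{ij}$ and $w = s^{-M'} \tilde{w}$ with integers $M, M' \geq 0$ large enough that $\tilde{a}_{ij}(s) = s^M a_{ij}(t)$ and $\tilde{w}(s) = s^{M'} \det(A(t))^{-1}$ all lie in $\C[[s]]$, then clear the remaining negative powers of $s$ from the equations by a further multiplication. The resulting system is a finite system of polynomial equations with coefficients in $\C\{s\}$ admitting $(\tilde{a}_{ij}, \tilde{w})$ as a formal power series solution. Artin's approximation theorem then produces a convergent solution $(\tilde{b}_{ij}(s), \tilde{u}(s)) \in \C\{s\}^{n^2+1}$ congruent to the formal one modulo any prescribed $s^N$; choosing $N$ with $N - \max(M,M') \geq kq$ and undoing the substitution yields a matrix $B(t)$ with entries in the convergent Puiseux series field $F$ satisfying $B(t) \in \V(I)$ and $B(t) \equiv A(t) \pmod{t^q}$. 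Since $q > v_{\max}$, Corollary \ref{COR:Same invariant factors} applied to $A(t)$ and $B(t)$ then gives $B(t) \in \GL_n(F)$ and $\sval(B(t)) = \sval(A(t))$, completing the proof.

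The main subtlety lies in the preliminary reduction: Puiseux series can have negative valuations, so Artin's theorem does not apply directly, and one must absorb them into a shift $s^M$ and carefully track both this shift and the powers of $s$ cleared from the equations, so that the approximant $B(t)$ still lies in $\V(I)$ and remains close enough to $A(t)$ for Corollary \ref{COR:Same invariant factors} to apply. Once this bookkeeping is in place, the invertibility of $B(t)$ is not something Artin's theorem delivers by itself but is recovered automatically from the $t$-adic proximity via the domination argument of Lemma \ref{LEM:Domination invariant factors}.
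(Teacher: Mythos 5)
Your proof is correct, but it takes a genuinely different route from the paper's. You appeal to Artin's approximation theorem for the convergent power series ring: after reducing to a common denominator $s = t^{1/k}$, shifting by $s^M, s^{M'}$ to land in $\C[[s]]$, and clearing denominators from the finitely many equations cutting out $\V(I)$ in $\C[a_{ij},w]/(\det(a_{ij})w-1)$, you get a formal power series solution of a polynomial system over $\C\{s\}$ and Artin produces a convergent solution arbitrarily $t$-adically close to $A(t)$. The paper instead truncates $A(t)$ at level $q$ as $T_qA(t)$, writes the tail as $t^q X$, pushes $I$ forward to a Laurent polynomial ideal $J_F$ over the field $F$ of convergent Puiseux series in the variables $X_{ij}$, and then combines three ingredients: Newton--Puiseux (so $F$ is algebraically closed), Lemma \ref{LEM:Field extension}, and the fundamental theorem of tropical geometry, to exhibit a solution $\tilde z$ over $F$ with the same (strictly positive) coordinatewise valuations as the true tail $t^{-q}(A(t)-T_qA(t))$; the approximant is then $B(t) = T_qA(t)+t^q\tilde z(t)$. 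Both arguments converge on the same endgame: produce $B(t) \in \V(I) \cap \Mat_n(F)$ with $B(t) \equiv A(t) \pmod{t^q}$ for $q$ exceeding every invariant factor of $A(t)$, and then invoke Corollary \ref{COR:Same invariant factors}. Your route is shorter and relies on a single external black box; the paper's route is longer but stays within the tropical toolkit it has already built and makes explicit which variables are being perturbed and by how much, which also yields, as a byproduct, control over the valuations of the perturbation. One small advantage of your setup is that invertibility of $B(t)$ drops out twice (from $\det\cdot w = 1$ and independently from the proximity argument), whereas the paper obtains it only from Corollary \ref{COR:Same invariant factors}.
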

\begin{proof}
Let $I$ be such an ideal and $A(t) \in \V(I)$. Let $q$ be a rational number greater than any invariant factor of $A(t)$. By Newton-Puiseux theorem, $F$ is an algebraically closed field. Let $S = \{(i,j) \in \{1, \ldots, n\}^2|(A(t) - T_qA(t))_{ij} \neq 0\}$ and for all polynomial $f \in \C[\GL_n]$,
\[
g_f : \fonction{(F^*)^S}{F}{(x_{ij}(t))_{(i,j) \in S}}{f(T_qA(t) + t^qX(t)) \textrm{ where $X_{ij}(t) = x_{ij}(t)$ if $(i,j) \in S$, 0 else.}} \in F\!\left[(X_{ij})_{(i,j) \in S}^\pm\right]
\]
Let $J_F = \{g_f|f \in I\}$ and $J_{\overline{\mathcal{K}}} = J_F\overline{\mathcal{K}}\!\left[(X_{ij})_{(i,j) \in S}^\pm\right]$. We verify easily that $J_F$ is an ideal of $F\!\left[(X_{ij})_{(i,j) \in S}^\pm\right]$. By lemma \ref{LEM:Field extension}, $\trop(J_F) = \trop(J_{\overline{\mathcal{K}}}) \subset \R^S$. Moreover, by definition of the $g_f$ and $J_{\overline{\mathcal{K}}}$, if we set for all $(i,j) \in S$, $z_{ij}(t) = t^{-q}(A(t) - T_qA(t))_{ij} \neq 0$ and $x_{ij} = \val(z_{ij}(t)) > 0$ (they are positive by definition of $T_q$), we have $(z_{ij}(t)) \in \V(J_{\overline{\mathcal{K}}})$ thus $(x_{ij}) \in \trop(\V(J_{\overline{\mathcal{K}}})) = \trop(J_F)$. Therefore, there exists Puiseux series $\left(\tilde{z}_{ij}(t)\right)_{(i,j) \in S}$ that converge in a neighborhood of $0^+$ such that for all $(i,j) \in S$, $\val\left(\tilde{z}_{ij}(t)\right) = x_{ij} > 0$ and which belong to $\V(J_F)$. Let $B(t) = T_qA(t) + t^qZ(t)$ where $Z_{ij}(t) = \tilde{z}_{ij}(t)$ if $(i,j) \in S$, 0 else. In particular, $Z \in \Mat_n(\RR \cap F)$. By construction, $B(t) \in \Mat_n(F)$, $B(t) \in \V(I)$ and $B(t) \equiv A(t)\ [t^q]$ thus $B(t) \in \GL_n(F)$ and $\sval(B(t)) = \sval(A(t))$ by lemma \ref{COR:Same invariant factors}. It proves the lemma.
\end{proof}
And finally, the wanted inclusion,
\begin{proposition}
    $\d -\strop_\Q(Y) \subset \liminf_{\rho \rightarrow 0^+} \rho\sA(Y)$.
\end{proposition}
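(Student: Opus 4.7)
Start from a rational point $x = (v_1,\ldots,v_n) \in \strop_\Q(Y)$ and unpack the definition: $x = \sval(A(t))$ for some $A(t) \in Y(\overline{\mathcal{K}})$. The plan is to evaluate $A$ at real $t > 0$ close to $0$, show that $\sLog(A(t)) = x \ln t + \mathrm{O}(1)$ in $\R^n/\mathcal{S}_n$, and then rescale by the natural choice $\rho = -1/\ln t$ to recover $-x$ in the limit. This will produce, for every small $\rho > 0$, an explicit element of $\rho\,\sA(Y)$ arbitrarily close to $-x$, which is exactly the definition of $-x \in \liminf_{\rho \to 0^+} \rho\,\sA(Y)$.

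The preparatory reductions are two. First, by Lemma \ref{LEM:Convergent matrix}, I may replace $A(t)$ by a matrix with entries in the field $F$ of convergent Puiseux series without changing $\sval(A(t))$. Second — and this is the main technical step — I need a \emph{convergent} Smith factorization $A(t) = P(t)\, D(t)\, Q(t)^{-1}$, where $D(t) = \diag(t^{v_1},\ldots,t^{v_n})$ and where $P(t),\, Q(t),\, P(t)^{-1},\, Q(t)^{-1}$ all have convergent Puiseux series entries with non-negative valuation. The proof of the formal Smith normal form proposition in the excerpt carries through verbatim in the convergent setting: after choosing a common denominator $N$ for all exponents appearing in $A(t)$, one replaces the formal DVR of Puiseux series in $t^{1/N}$ by the convergent DVR $\C\{t^{1/N}\}$, which is still a principal ideal domain, so the same proof yields a convergent factorization.

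Once this factorization is in hand, $P(t),\, Q(t)$ and their inverses are analytic at $t = 0$, hence bounded by some uniform constant $C$ on a small interval $(0,\varepsilon)$. Weyl's multiplicative singular-value inequalities then give
\[
C^{-2}\, \sigma_k(D(t)) \;\leq\; \sigma_k(A(t)) \;\leq\; C^2\, \sigma_k(D(t))
\]
for each $k$, where $\sigma_k$ denotes the $k$-th singular value in decreasing order. For $t \in (0,1)$ the singular values of $D(t)$ are exactly $t^{v_{(1)}} \geq \cdots \geq t^{v_{(n)}}$, with $v_{(\cdot)}$ the sorted version of $x$. Taking logarithms yields $\sLog(A(t)) = x \ln t + \mathrm{O}(1)$ in $\R^n/\mathcal{S}_n$ as $t \to 0^+$. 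Moreover $f(A(t)) = 0$ as a formal Puiseux series for every $f \in I$; since $A(t)$ is convergent, this identity extends analytically to $(0,\varepsilon)$, so $A(t) \in Y \cap \GL_n(\C)$ for every $t \in (0,\varepsilon)$ outside the discrete zero set of $\det A$, giving $\sLog(A(t)) \in \sA(Y)$.

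To close, for each small $\rho > 0$ set $t_\rho = e^{-1/\rho}$ (perturbing slightly if necessary to dodge the discrete set where $\det A(t)$ vanishes). Then $\rho \ln t_\rho = -1$ and
\[
\rho\, \sLog(A(t_\rho)) \;=\; -x + \mathrm{O}(\rho) \;\tend{\rho}{0^+}\; -x,
\]
with $\rho\, \sLog(A(t_\rho)) \in \rho\,\sA(Y)$ for every sufficiently small $\rho > 0$. Continuity in $\rho$ then shows that any neighborhood of $-x$ meets $\rho\,\sA(Y)$ for all sufficiently small $\rho$, which is exactly the condition $-x \in \liminf_{\rho \to 0^+} \rho\,\sA(Y)$. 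The only genuinely delicate point is the convergent Smith factorization; the singular-value asymptotics and the final rescaling are routine.
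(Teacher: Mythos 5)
Your proof is correct, and it takes a genuinely different route from the paper's. The paper reduces to a convergent representative via Lemma~\ref{LEM:Convergent matrix} and then invokes \cite[Theorem 1.1]{Kaveh} as a black box to get $\sLog(A(s))/\ln(s) \to \sval(A(t))$, after which the rescaling $\rho = -1/\ln(s)$ is immediate. You instead re-derive that asymptotic from scratch: you pass to a convergent Smith factorization $A(t)=P(t)D(t)Q(t)^{-1}$ over the DVR $\C\{t^{1/N}\}$, observe that $P,Q,P^{-1},Q^{-1}$ are analytic hence uniformly bounded near $0$, and use the multiplicative singular-value inequalities $\sigma_k(BC)\leq\sigma_1(B)\sigma_k(C)$ to trap $\sigma_k(A(t))$ between $C^{\mp 2}t^{v_{(k)}}$; taking logarithms gives $\sLog(A(t)) = x\ln t + \mathrm{O}(1)$. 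What this buys is independence from the external citation at the cost of having to verify that the Smith normal form argument transports to the convergent setting (which it does, since $\C\{t^{1/N}\}$ is again a DVR and hence a PID). Two small remarks: (i) the ``perturbing slightly to dodge the zero set of $\det A(t)$'' is unnecessary — since $\det A(t)$ is a nonzero convergent Puiseux series, its leading term dominates and it is automatically nonvanishing on $(0,\varepsilon')$ for small enough $\varepsilon'>0$; (ii) in the factorization you should be explicit that you absorb the unit parts $d_k(t)/t^{v_k}$ of the invariant factors into $P(t)$, which is fine because they have valuation zero, but the bare Smith normal form gives invariant factors, not monomials.
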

\begin{proof}
Let $x \in -\strop_\Q(Y)$. It means that there exists $A(t) \in \GL_n(\overline{\mathcal{K}})$ such that $x = -\sval(A(t))$ and for all $f \in I$, $f(A(t)) = 0$. By lemma \ref{LEM:Convergent matrix}, we can assume without loss of generality that $A(t)$ converges on $]0,\varepsilon[$ for some $\varepsilon > 0$. We have \cite[Theorem 1.1]{Kaveh}, which has been proven in $\mathcal{K}$ but works the same way in $\overline{\mathcal{K}}$,
\[
\frac{\sLog(A(s))}{\ln(s)} \tend{s}{0^+} \sval(A(t)),
\]
so with $\d \rho = -\frac{1}{\ln(s)}$,
\[
\rho\sLog(A(\e^{-1/\rho})) \tend{\rho}{0^+} x.
\]
and all the $\sLog(A(\e^{-1/\rho}))$ (for $\rho > 0$ small enough) belong to $\sA(Y)$, which proves the inclusion.
\end{proof}
\begin{corollary}\label{COR:Inclusion1}
    $\d -\strop(Y) \subset \liminf_{\rho \rightarrow 0^+} \rho\sA(Y)$ because inferior and superior limits in the sens of Kuratowski are always closed.
\end{corollary}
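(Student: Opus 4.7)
The plan is to deduce the corollary from the preceding proposition by a simple closure argument. Recall that by definition,
\[
\strop(Y) = \overline{\{\sval(A(t)) \mid A(t) \in Y(\overline{\mathcal{K}})\}} = \overline{\strop_\Q(Y)},
\]
so negating gives $-\strop(Y) = \overline{-\strop_\Q(Y)}$ (negation is a homeomorphism of $\R^n/\mathcal{S}_n$, hence commutes with closure).

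Next I would invoke the standard fact about Kuratowski limits: for any family $(E_\rho)$ of subsets of a topological space, the lower limit $\liminf_{\rho \to 0^+} E_\rho$ is always closed. This follows directly from the definition, since if $x \notin \liminf E_\rho$, there is an open neighborhood $U$ of $x$ and a sequence $\rho_k \to 0^+$ with $U \cap E_{\rho_k} = \emptyset$; then every point of $U$ fails to be in $\liminf E_\rho$ for the same reason, so the complement of $\liminf E_\rho$ is open.

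Combining these two ingredients, the preceding proposition yields
\[
-\strop_\Q(Y) \;\subset\; \liminf_{\rho \to 0^+} \rho\,\sA(Y),
\]
and passing to the closure on both sides (using that the right-hand side is already closed) gives
\[
-\strop(Y) = \overline{-\strop_\Q(Y)} \;\subset\; \overline{\liminf_{\rho \to 0^+} \rho\,\sA(Y)} = \liminf_{\rho \to 0^+} \rho\,\sA(Y),
\]
which is exactly the statement of the corollary. There is no real obstacle here: all the substantive work was done in the previous proposition (via Lemma \ref{LEM:Convergent matrix} and the asymptotic result of \cite{Kaveh}); the corollary is a one-line topological upgrade from $\Q$-points to the full tropical variety.
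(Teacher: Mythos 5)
Your argument is correct and matches the paper's own reasoning exactly: the corollary is obtained from the preceding proposition by observing that the Kuratowski lower limit is closed and then passing to closures on both sides. Nothing to add.
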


\subsection{The second inclusion when $I$ is principal}

In this section, we consider $I = (f)$ a principal ideal. It implies that the $\Psi_{P(t),Q(t)}(I) = (\Psi_{P(t),Q(t)}(f))$ are also principal. Propositions \ref{PRO:Image nu, vertices} and \ref{PRO:Maximal cone} give an idea of the shape of the amoeba of $f$ in function of its Newton polytope $\sNewt(f)$. It will be helpful in order to determine its spherical tropical variety.
\begin{definition}
    If $\sNewt(f) \cap \Z\mathbf{1} = \O$, we define $C^- = C^+ = \O$. Else, consider $N^-$ (resp. $N^+$) the smallest (resp. biggest) integer such that $N^-\mathbf{1}$ (resp. $N^+\mathbf{1}$) belongs to $\sNewt(f)$. We define $C^-$ as $\inter{C_{\{N^-\mathbf{1}\}}}$ (resp. $C^+$ as $\inter{C_{\{N^+\mathbf{1}\}}}$). Notice that if $N^-\mathbf{1}$ (resp. $N^+\mathbf{1}$) is not a vertex of $\sNewt(f)$, $C^- = \O$ (resp. $C^+ = \O$).
\end{definition}
\begin{proposition}\label{PRO:Inclusion2}
    $\d \limsup_{\rho \rightarrow 0^+} \rho\sA(f) \subset \R^n\backslash(C^- \cup C^+)$.
\end{proposition}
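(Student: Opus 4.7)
The plan is to reduce to the case $C^+ \neq \emptyset$ (the argument for $C^-$ is identical) and to show that, for every $x \in C^+$, there exists an open neighbourhood $U$ of $x$ in $\R^n$ and some $\rho_0 > 0$ such that $(1/\rho)\,U \subset \sA(f)^C$ for all $\rho \in (0, \rho_0)$. By the definition of the Kuratowski limsup, this is equivalent to $x \notin \limsup_{\rho \to 0^+} \rho\,\sA(f)$.

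First I would note that $C^+$ is non-empty only if $N^+\mathbf{1}$ is a vertex of $\sNewt(f)$, since otherwise the normal cone $C_{\{N^+\mathbf{1}\}}$ has dimension strictly less than $n$ and empty interior. Assuming this, Proposition \ref{PRO:Image nu, vertices} gives a connected component $E^+ \subset \sA(f)^C$ of order $N^+\mathbf{1}$, and Proposition \ref{PRO:Maximal cone} identifies its recession cone as the full normal cone $C_{\{N^+\mathbf{1}\}}$. Fixing any $y_0 \in E^+$ we therefore have $y_0 + C_{\{N^+\mathbf{1}\}} \subset E^+$, so it is enough to check that, for some neighbourhood $U$ of $x$, the dilation $(1/\rho)\,U$ lies in $y_0 + C_{\{N^+\mathbf{1}\}}$ for every sufficiently small $\rho > 0$.

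For this last step I would take $U = B(x, d/2)$, where $d = \mathrm{dist}(x, \partial C_{\{N^+\mathbf{1}\}}) > 0$; the positivity of $d$ is immediate because $x$ lies in the interior $C^+$. Since $C_{\{N^+\mathbf{1}\}}$ is a cone with apex $0$, the distance from $x/\rho$ to $\partial C_{\{N^+\mathbf{1}\}}$ equals $d/\rho$, and a single application of the triangle inequality gives $\mathrm{dist}(x/\rho - y_0,\,\partial C_{\{N^+\mathbf{1}\}}) \geq d/\rho - |y_0|$. For $\rho$ small enough this exceeds $d/(2\rho)$, which is exactly the radius of the translated ball $(1/\rho)\,U - y_0$, and hence $(1/\rho)\,U \subset y_0 + C_{\{N^+\mathbf{1}\}} \subset E^+ \subset \sA(f)^C$.

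The main work, I expect, is already done in the earlier propositions: the real content is the existence of a component $E^+$ whose recession cone is the \emph{entire} normal cone $C_{\{N^+\mathbf{1}\}}$, which is what Propositions \ref{PRO:Image nu, vertices} and \ref{PRO:Maximal cone} guarantee. Once that is in hand, the proof reduces to the elementary observation that a fixed ball, rescaled deeply into the interior of a cone, eventually lies in any translate of that cone, so no new analytic input is required.
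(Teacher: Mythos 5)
Your proposal is correct and follows essentially the same route as the paper: both invoke Propositions \ref{PRO:Image nu, vertices} and \ref{PRO:Maximal cone} to produce a component $E^+$ of order $N^+\mathbf{1}$ containing a translate $y_0 + C_{\{N^+\mathbf{1}\}}$, and then both verify by an elementary scaling argument that a small ball around $x \in C^+$, once divided by $\rho$, eventually lies inside that translate. The only cosmetic difference is that you phrase the rescaling step via the distance to $\partial C_{\{N^+\mathbf{1}\}}$ while the paper translates the ball by $\rho y$; the content is identical.
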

\begin{proof}
We need to prove that every point in $C^-$ or in $C^+$ does not belong to the limit. By symmetry, it is enough to prove it for $C^+$. It is trivial is $C^+$ is empty. Assume now that $\sNewt(f) \cap \Z\mathbf{1} \neq \O$ and $N^+\mathbf{1}$ is a vertex point of $\sNewt(f)$. Let $x \in C^+ = \inter{C_{\{N^+\mathbf{1}\}}}$. By propositions \ref{PRO:Image nu, vertices} and \ref{PRO:Maximal cone}, there exists a point $y \in E$ where $E \in \Gamma(\sA(f)^C)$ and $\nu_f(E) = N^+\mathbf{1}$, and $y + C_{\{N^+\mathbf{1}\}} \subset E$.

Consider $r > 0$ such that $B(x,r) \subset C_{\{N^+\mathbf{1}\}}$. For all $\d x' \in B\left(x,\frac{r}{2}\right)$ and for all $\d 0 < \rho < \frac{r}{2\norme{y}}$ (any $\rho > 0$ if $y = 0$), $\d x' - \rho y \in B\left(x',\frac{r}{2}\right) \subset B(x,r) \subset C_{\{N^+\mathbf{1}\}}$. As this set is a cone, it is stable by product by a positive real number so $\d \frac{x'}{\rho} - y \subset C_{\{N^+\mathbf{1}\}}$, which means that $x' \in \rho(y + C_{\{N^+\mathbf{1}\}}) \subset \rho\sA(f)^C$ for all $\rho$ small enough. This is true for all $x'$ in a neighborhood of $x$ so $\d x \notin \limsup_{\rho \rightarrow 0^+} \rho\sA(f)$. It proves the proposition.
\end{proof}
We now need to prove that $\R^n\backslash(C^- \cup C^+) \subset -\strop(\V(f))$ to get the wanted inclusion. The proof is a bit handmade and needs some lemmas.
\begin{lemma}
    Let $p \geq 2$ and $(f_k)_{1 \leq k \leq p}$ a family of continuous function of a segment $[a,b]$ such that there exists $1 \leq i \neq j \leq p$ verifying for all $1 \leq k \leq p$, $f_i(a) \leq f_k(a)$ and $f_j(b) \leq f_k(b)$. Then, there exists $a \leq c \leq b$ and $1 \leq i' \neq j' \leq p$ such that for all $1 \leq k \leq p$, $f_{i'}(c) = f_{j'}(c) \leq f_k(c)$.
\end{lemma}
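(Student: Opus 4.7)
The plan is to reduce the lemma to an analysis of the ``active'' index set at each point, combined with a compactness/pigeonhole argument to pass to the limit. Introduce the continuous function $m(x) = \min_{1 \leq k \leq p} f_k(x)$ and the closed set
\[
S = \{x \in [a,b] \mid f_i(x) = m(x)\},
\]
which contains $a$ by hypothesis. The whole argument rests on the behavior of $c := \sup S$.

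First I would dispose of the easy case $c = b$. Since $S$ is closed, $b \in S$, i.e.\ $f_i(b) = m(b)$. But the hypothesis at $b$ also gives $f_j(b) = m(b)$, so $f_i(b) = f_j(b) \leq f_k(b)$ for all $k$, with $i \neq j$. Taking $(c, i', j') = (b, i, j)$ is then sufficient.

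The substantive case is $c < b$. Again by closedness $c \in S$, so $f_i(c) = m(c)$, while for every $x \in (c, b]$ one has $f_i(x) > m(x)$, i.e.\ some index other than $i$ strictly beats $f_i$ at $x$. Choose a sequence $x_n \to c^+$ in $(c,b]$ and, for each $n$, an index $k_n \neq i$ with $f_{k_n}(x_n) = m(x_n)$. Since $k_n$ takes only finitely many values, the pigeonhole principle furnishes an index $k^* \neq i$ and a subsequence along which $k_n = k^*$, so that $f_{k^*}(x_n) = m(x_n) \leq f_k(x_n)$ for every $k$. Letting $n \to \infty$ and using continuity of every $f_k$ yields $f_{k^*}(c) \leq f_k(c)$ for all $k$, hence $f_{k^*}(c) = m(c) = f_i(c)$. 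Taking $(i', j') = (i, k^*)$ concludes.

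The only delicate point is the extraction in the second case: one must argue that among the indices achieving the minimum immediately to the right of $c$, \emph{some} index other than $i$ appears on a sequence accumulating at $c$. Finiteness of $\{1,\ldots,p\}\setminus\{i\}$ makes this automatic, and continuity of the $f_k$ then transports the equality $f_{k^*}(x_n) = m(x_n)$ to the limit. No further subtlety is required; the remainder of the proof is a clean application of the intermediate-value/limit idea.
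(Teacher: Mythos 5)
Your proof is correct and is essentially the paper's argument unpacked: the paper applies the intermediate value theorem directly to $g(s) = f_i(s) - \min_{k\neq i} f_k(s)$, and your set $S = \{x : f_i(x) = m(x)\}$ is exactly the sublevel set $\{g \leq 0\}$, so taking $c = \sup S$ is the standard completeness proof of the IVT for that function. The pigeonhole extraction at the end simply makes explicit the choice of the tying index $j'$ (an argmin of $\{f_k(c) : k \neq i\}$) that the paper leaves implicit.
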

\begin{proof}
Just apply the intermediate values theorem to the function $\d s \mapsto f_i(s) - \min_{k \neq i}\{f_k(s)\}$.
\end{proof}
\begin{lemma}\label{LEM:Tropical shift}
    Let $p \geq 2$, $(f_k)_{1 \leq k \leq p}$ be a family of continuous functions on a segment $[a,b]$ and $(w_k)_{1 \leq k \leq p}$ a family of distinct vectors of $\R^n$ with integer coefficients. We define for all $a \leq s \leq b$ be the tropical polynomial $\d T_s : x \mapsto \min_{1 \leq k \leq p}\{f_k(s) + w_k \cdot x\}$. Let for all $k$ and $s$, $M_k(s,x) = f_k(s) + w_k \cdot x$ be the monomials. If $x \in \R^n$ is such that $T_a(x) = M_i(a,x)$ and $T_b(x) = M_j(b,x)$ for some $1 \leq i \neq j \leq p$, then $\d x \in \overline{\bigcup_{s \in [a,b] \cap \Q} \V(T_s)}$.
\end{lemma}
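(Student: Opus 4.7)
The plan is to reduce the lemma to the preceding one applied fibrewise at $x$, and then to argue by contradiction that rational parameters close to the resulting $c$ produce points of $\V(T_s)$ arbitrarily close to $x$.

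For fixed $x$, the functions $s \mapsto M_k(s,x) = f_k(s) + w_k \cdot x$ are continuous on $[a,b]$, the index $i$ attains the minimum at $s = a$, and the index $j \neq i$ attains it at $s = b$. Applying the preceding lemma produces $c \in [a,b]$ and indices $i' \neq j'$ with $M_{i'}(c,x) = M_{j'}(c,x) = T_c(x)$, so $x \in \V(T_c)$. If $c$ is rational the lemma is proven; otherwise, fix $\epsilon > 0$ and a sequence of rationals $c_n \in [a,b] \cap \Q$ converging to $c$. It suffices to show that $\V(T_{c_n}) \cap B(x,\epsilon)$ is non-empty for $n$ large.

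I would prove this by contradiction. If that intersection is empty for infinitely many $n$, then on the connected open ball $B(x,\epsilon)$ the concave piecewise affine function $T_{c_n}$ is smooth, hence coincides with a single monomial $M_{k_n}(c_n,\cdot)$. Passing to a subsequence, one may assume $k_n$ is a constant index $k_0$. The domination $M_{k_0}(c_n,y) \leq M_k(c_n,y)$ for all $y \in B(x,\epsilon)$ is equivalent, by minimising the affine function $y \mapsto (M_k - M_{k_0})(c_n,y)$ over the ball, to
\[
(M_k - M_{k_0})(c_n,x) \;\geq\; \epsilon \, \norme{w_k - w_{k_0}} \quad \text{for every } k \neq k_0.
\]
Letting $n \to \infty$ and using continuity of the $f_k$ yields $(M_k - M_{k_0})(c,x) > 0$ for all $k \neq k_0$, since the $w_k$ are pairwise distinct. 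But at least one of $i',j'$ differs from $k_0$; calling it $k^*$, the equalities $M_{i'}(c,x) = M_{j'}(c,x) = T_c(x) \leq M_{k_0}(c,x)$ force $M_{k^*}(c,x) \leq M_{k_0}(c,x)$, contradicting the strict positivity.

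The main obstacle is that the active set $\{k : M_k(c,x) = T_c(x)\}$ may have more than two elements, which would derail any direct argument that tracks a single pair of monomials as $s$ varies along the hyperplane $\{M_{i'}(s,\cdot) = M_{j'}(s,\cdot)\}$: the point on this hyperplane closest to $x$ need not be a local minimiser of $T_s$. The contradiction argument sidesteps this difficulty by exploiting only that a single monomial dominates on a whole ball, a condition that passes cleanly to the limit irrespective of how many ties occur at $(c,x)$.
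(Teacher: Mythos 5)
Your proof is correct, and it takes a genuinely different route from the paper's. The paper proceeds constructively: after obtaining $c$ from the preceding lemma, it picks a rational $q$ within $\delta$ of $c$, identifies the two smallest monomials $M_{i_1}(q,x) \leq M_{i_2}(q,x)$ at $(q,x)$, argues via the gap $\varepsilon_0 = \min_{k \notin K}\{M_k(c,x)\} - T_c(x)$ that both $i_1$ and $i_2$ lie in the active set $K$ at $(c,x)$, and then re-applies the preceding lemma along the segment $s \mapsto x + s(w_{i_2} - w_{i_1})$ to produce an explicit point $z \in \V(T_q)$ with $\abs{z - x} \leq \varepsilon$. Your argument is instead a compactness/contradiction argument: if some ball $B(x,\varepsilon)$ missed every $\V(T_{c_n})$, the uniquely-attaining index would be constant on the ball (local constancy on the complement of $\V$ plus connectedness), so a single monomial $M_{k_0}(c_n,\cdot)$ would dominate on the whole ball; quantifying that domination as $(M_k - M_{k_0})(c_n,x) \geq \varepsilon \norme{w_k - w_{k_0}}$ and passing to the limit $n \to \infty$ gives a strict inequality that contradicts the tie at $(c,x)$. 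Your version avoids the explicit perturbation and the bookkeeping of active sets entirely; what you pay for it is that the argument is non-constructive and requires the small observation (which you implicitly use and which is correct) that differentiability of a tropical polynomial on an open convex set forces a single monomial to dominate there. Both proofs handle the case of more than two active monomials at $(c,x)$ correctly; the paper handles it by showing the two smallest at $(q,x)$ both belong to $K$, while you sidestep it as you describe in your final paragraph.
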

\begin{proof}
According to the previous lemma used with the functions $(M_k(\cdot,x))$, there exists a $a \leq c \leq b$ such that $x \in \V(T_c)$. Therefore, we can introduce $K = \{1 \leq k \leq p|T_c(x) = M_k(c,x)\}$ that contains at least two elements. Let $\d \varepsilon_0 = \min_{k \notin K}\{M_k(c,x)\} - T_c(x) > 0$. Now, let $\varepsilon > 0$ that we assume to be less than $\varepsilon_0$ without loss of generality. As the $M_k(\cdot,x)$ are continuous, there exists a $\delta > 0$ such that for all $s$, $\d \abs{s - c} \leq \delta \Rightarrow \abs{M_k(s,x) - M_k(c,x)} \leq \frac{\varepsilon}{2}$. Let $q$ be a rational number such that $\abs{q - c} \leq \delta$. Let $i_1 \neq i_2$ be such that for all $k \neq i_1$, $M_{i_1}(q,x) \leq M_{i_2}(q,x) \leq M_k(q,x)$. If $i_1 \notin K$, let $k \in K$,
\[
M_{i_1}(q,x) \geq M_{i_1}(c,x) - \frac{\varepsilon}{2} > T_c(x) - \frac{\varepsilon}{2} + \varepsilon_0 = M_k(c,x) - \frac{\varepsilon}{2} + \varepsilon_0 \geq M_k(q,x) - \varepsilon + \varepsilon_0 \geq M_k(q,x),
\]
which is a contradiction. Therefore, $i_1 \in K$. If $i_2 \notin K$, we do the same reasoning but we choose $k \in K\backslash\{i_1\}$ that exists because $\abs{K} \geq 2$. It implies by the way that $M_{i_1}(c,x) = M_{i_2}(c,x)$ so by triangular inequality and the definition of $\delta$, $M_{i_1}(q,x) - M_{i_2}(q,x) \leq \varepsilon$. Recall that the $w_k$ are distinct. Let $y = x + \alpha(w_{i_2} - w_{i_1})$ where $\d \alpha = \frac{M_{i_1}(q,x) - M_{i_2}(q,x)}{\abs{w_{i_1} - w_{i_2}}}$.
\[
M_{i_1}(q,y) - M_{i_2}(q,y) = M_{i_1}(q,x) - M_{i_2}(q,x) + (w_{i_1} - w_{i_2}) \cdot (y - x) = 0.
\]
In particular, the minimum of the tropical polynomial $T_q$ is reached for an index that is not $i_1$ (it can be $i_2$ or an other index, it does not matter). Therefore, if we use again the previous lemma with the $s \mapsto M_k(q,x + s(w_{i_2} - w_{i_1}))$, there exists a $0 \leq \beta \leq \alpha$ such that $z = x + \beta(w_{i_2} - w_{i_1}) \in \V(T_q)$. Moreover,
\[
\abs{z - x} \leq \alpha\abs{w_{i_2} - w_{i_1}} = \abs{M_{i_1}(q,x) - M_{i_2}(q,x)} \leq \varepsilon.
\]
$q \in \Q$ and it is true for every $\varepsilon$ in a neighborhood of $0^+$ so $\d x \in \overline{\bigcup_{s \in [a,b] \cap \Q} \V(T_s)}$.
\end{proof}
Now, let us prove the desired inclusion.
\begin{proposition} \label{PRO:Inclusion3, case 1}
    If $S_f \not\subset \Z\mathbf{1}$, $\d \lim_{\rho \rightarrow 0^+} \rho\, \sA(f) = -\strop(\V(f)) = \R^n\backslash(C^- \cup C^+)$ and this set is $\{\mathbf{1}\}^\perp$ if $f$ is not invertible, $\O$ else.
\end{proposition}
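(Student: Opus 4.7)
Once the reverse inclusion $\R^n\setminus(C^-\cup C^+)\subset-\strop(\V(f))$ is established, the proposition follows immediately: combined with Corollary~\ref{COR:Inclusion1} and Proposition~\ref{PRO:Inclusion2}, the three intermediate sets $-\strop(\V(f))$, $\liminf\rho\,\sA(f)$ and $\limsup\rho\,\sA(f)$ all collapse to $\R^n\setminus(C^-\cup C^+)$, and the Kuratowski limit exists automatically. I therefore fix $x\in\R^n\setminus(C^-\cup C^+)$, reduce to $x\in\Q^n$ by the density of rational points (and the closedness of $\strop$), and produce via Proposition~\ref{PRO:Union trop} a pair $(P(t),Q(t))\in\GL_n(\RR)^2$ realising $-x\in\trop(\V(\Psi_{P,Q}(f)))$. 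By the fundamental theorem of tropical algebra over $\overline{\mathcal{K}}$ applied to $\Psi_{P,Q}(f)=\sum_{m\in S_f}Q_m(P,Q)z^m$, this amounts to arranging that
\[
T(-x)\;=\;\min_{m\in S_f}\bigl\{\val(Q_m(P,Q))-m\cdot x\bigr\}
\]
be attained by at least two distinct monomials.

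I would split the construction on the location of $-x$ in the normal fan of $\sNewt(f)$. If $\argmax_{m\in S_f}(m\cdot x)$ is not a single vertex, a generic $(P,Q)=(U,V)\in U(n)^2$ already suffices: by Proposition~\ref{PRO:Sf = SPsiUVf} every $Q_m(U,V)$ is a non-zero complex scalar with valuation~$0$, so $T(-x)=-\max_m m\cdot x$ is automatically realised by each of the competing argmaxima. Otherwise the argmax is a unique vertex $v$, and the hypothesis $x\notin C^+\cup C^-$ excludes $v\in\{N^+\mathbf{1},N^-\mathbf{1}\}$; since these are the only candidates for vertices of $\sNewt(f)$ on $\R\mathbf{1}$, one gets $v\notin\Z\mathbf{1}$. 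Proposition~\ref{PRO:Image nu, vertices}(b) then provides some $(U_0,V_0)\in U(n)^2$ with $Q_v(U_0,V_0)=0$; a genericity argument on the real-analytic slice $\{Q_v=0\}\cap U(n)^2$ allows the further choice that $Q_m(U_0,V_0)\neq 0$ for every $m\in S_f\setminus\{v\}$. Setting $\alpha:=v\cdot x-M_1\in\Q_{>0}$, where $M_1:=\max_{m\in S_f\setminus\{v\}}m\cdot x$, I define
\[
(P(t),Q(t))\;:=\;\bigl(U_0+t^{\alpha}X,\;V_0+t^{\alpha}Y\bigr)
\]
for generic $X,Y\in\Mat_n(\C)$. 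Then $(P,Q)\in\GL_n(\RR)^2$ (its leading term $(U_0,V_0)$ is invertible over $\C$), and a first-order Taylor expansion yields $\val(Q_v(P,Q))=\alpha$ together with $\val(Q_m(P,Q))=0$ for all $m\neq v$. The choice of $\alpha$ precisely balances the contributions, so $T(-x)=-M_1$ is attained simultaneously by $v$ and by any $m^*\in S_f\setminus\{v\}$ realising $M_1$, giving $-x\in\trop(\V(\Psi_{P,Q}(f)))$ as required. As a variant avoiding the need to hit $\alpha$ exactly, one could let $\phi(s)$ vary continuously across the critical value $v\cdot x-M_1$ in a family $(U_0+t^{\phi(s)}X,V_0+t^{\phi(s)}Y)$ and invoke Lemma~\ref{LEM:Tropical shift}: the winning monomial at $-x$ jumps from $v$ to $m^*$, placing $-x$ in the closure of the rational tropical varieties.

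The principal obstacle I anticipate is the genericity step in the second case: I must rule out that some $Q_m$ with $m\in S_f\setminus\{v\}$ vanishes identically on $\{Q_v=0\}\cap U(n)^2$, so that the required point $(U_0,V_0)$ actually exists. This can be addressed via the representation-theoretic structure of the $Q_m$ as matrix coefficients of distinct irreducible $\GL_n\times\GL_n$-representations developed in Section~\ref{SEC:Representations}; the fact that distinct $Q_m$ have different Borel weights prevents any non-trivial functional dependence on such a slice. The supplementary identification of the set $\R^n\setminus(C^-\cup C^+)$ with $\{\mathbf{1}\}^\perp$ or $\emptyset$ is then a separate combinatorial observation about $\sNewt(f)$ under the standing hypothesis $S_f\not\subset\Z\mathbf{1}$ and is independent of the tropical arguments above.
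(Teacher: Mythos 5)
There is a gap in your reading of the statement that the paper's own proof reveals: the hypothesis as printed, ``$S_f \not\subset \Z\mathbf{1}$'', is a typo. The paper's proof of Proposition~\ref{PRO:Inclusion3, case 1} opens with ``Assume that $S_f \subset \Z\mathbf{1}$,'' and the immediately following Proposition~\ref{PRO:Inclusion3, case 2} explicitly takes up the complementary case $S_f \not\subset \Z\mathbf{1}$. The two propositions are meant to split the proof of Theorem~\ref{THE:Bergman hypersurface} into the degenerate and nondegenerate cases. You should have been tipped off by the conclusion itself: under $S_f \not\subset \Z\mathbf{1}$ the polytope $\sNewt(f)$ has full dimension transverse to $\R\mathbf{1}$, the open normal cones $C^\pm$ are strictly smaller than open half-spaces, and $\R^n\backslash(C^-\cup C^+)$ is a genuine tropical fan (as in Figure~\ref{FIG:Example trop}) -- it is emphatically not $\{\mathbf{1}\}^\perp$ nor $\O$. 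You dismiss this as ``a separate combinatorial observation \ldots independent of the tropical arguments,'' but it is in fact incompatible with the hypothesis you adopted; noticing that incompatibility would have pointed you to the correct reading.

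What the paper actually proves here is far simpler than what you attempt. When $S_f \subset \Z\mathbf{1}$, the homogeneity properties of the $Q_m$ (Proposition~\ref{PRO:Properties of the Qm}, the argument of Proposition~\ref{PRO:Image nu, vertices}) force $Q_{N\mathbf{1}}(A,B) = \alpha_N\det(AB^{-1})$, hence $f = P\circ\det$ for a one-variable Laurent polynomial $P$. Then $\sA(f) = \{\mathbf{1}\}^\perp + \A(P)$ with $\A(P)$ a finite set, the scaling limit is computed directly, and $\{\mathbf{1}\}^\perp \subset -\strop(\V(f))$ is exhibited by the explicit Puiseux matrices $\diag(rt^{-x_1},t^{-x_2},\ldots,t^{-x_n})$ with $r$ a root of $P$. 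The identifications $C^\pm = \{x: \pm\, x\cdot\mathbf{1}>0\}$ (or $C^\pm=\R^n$ when $P$ is a monomial) are read off from $\sNewt(f)\subset\R\mathbf{1}$. None of the machinery you invoke -- argmax analysis on the normal fan, the genericity argument on $\{Q_v=0\}\cap\U(n)^2$, the one-parameter deformation $(U_0 + t^\alpha X, V_0 + t^\alpha Y)$ -- is needed or applicable in this case.

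Your sketch is, in spirit, aimed at Proposition~\ref{PRO:Inclusion3, case 2}, and even there it diverges from the paper. The paper does not attempt to nail $\val(Q_v(P(t),Q(t)))$ to a prescribed value $\alpha$; it only secures $0 < \val(Q_{m_0}(P(t),Q(t))) < +\infty$ using the linear interpolation $P(t) = U + t(A-U)$, and then applies Lemma~\ref{LEM:Tropical shift} to a continuous family $t\mapsto t^s$ so that the tropical minimum at $x$ is forced to tie for some rational $s$. Your attempt to hit $\alpha$ exactly by a first-order Taylor expansion would need you to rule out higher-order vanishing of $Q_v$ at $(U_0,V_0)$ along $(X,Y)$, which is avoidable but adds a genericity step the paper's approach sidesteps entirely. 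In short: wrong case, missing the elementary structural reduction $f = P\circ\det$, and an overly delicate variant of the argument that actually belongs to the next proposition.
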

\begin{proof}
Assume that for $S_f \subset \Z\mathbf{1}$. By an argument similar as the proof of the one in the proof of the proposition \ref{PRO:Image nu, vertices}, we have for every $A,B$, $Q_{N\mathbf{1}}(A,B) = \alpha_N\det(AB^{-1})$ so $f = P \circ \det$ where $P$ is a one variable Laurent polynomial. We compute easily that $\sA(f) = \{\mathbf{1}\}^\perp + \A(P)$ and $\A(P)$ is the finite set of the norms of the non zero roots of $P$. $f$ invertible is equivalent to $\abs{S_f} = 1$, which is equivalent to $P$ being a monomial. In that case $\A(P) = \O$ so $\d \lim_{\rho \rightarrow 0^+} \rho\sA(f) = -\strop(\V(f)) = \O$. Else, $\A(P) \neq \O$ so $\d \lim_{\rho \rightarrow 0^+} \rho\sA(f) = \{\mathbf{1}\}^\perp$. Let $r$ be a non zero root of $P$. Notice that for all $x \in \Q^n \cap \{\mathbf{1}\}^\perp$, $f(\diag(rt^{-x_1},t^{-x_2},\ldots,t^{-x_n})) = P(r) = 0$ and $-\sval(\diag(rt^{-x_1},t^{-x_2},\ldots,t^{-x_n})) = x$. We deduce that $\{\mathbf{1}\}^\perp \subset -\strop(\V(f))$ and the reverse inclusion is given by the corollary \ref{COR:Inclusion1}.

When $f$ is invertible, $\abs{S_f} = 1$ so $C^- = C^+ = \R^n$. If $f$ is non invertible, $\abs{S_f} \geq 2$ so $C^- = \{x \in \R^n|x \cdot \mathbf{1} < 0\}$ and $C^+ = \{x \in \R^n|x \cdot \mathbf{1} > 0\}$. In both cases, $-\strop(\V(f)) = \R^n\backslash(C^- \cup C^+)$.
\end{proof}
\begin{proposition}\label{PRO:Inclusion3, case 2}
    If $S_f \not\subset \Z\mathbf{1}$, $\R^n\backslash(C^- \cup C^+) \subset -\strop(\V(f))$.
\end{proposition}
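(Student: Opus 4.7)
The strategy is to combine Proposition \ref{PRO:Union trop} with the tropical shift Lemma \ref{LEM:Tropical shift}. First, by Corollary \ref{COR:Inclusion1} and the closedness of Kuratowski lim-infs, it suffices to prove the rational version: for every $x \in \Q^n \cap (\R^n \setminus (C^- \cup C^+))$, there exist $(P(t), Q(t)) \in \GL_n(\RR) \times \GL_n(\RR)$ with $-x \in \trop(\V(\Psi_{P(t), Q(t)}(f)))$, which by Proposition \ref{PRO:Union trop} places $-x$ in $\strop(\V(f))$, hence $x \in -\strop(\V(f))$.

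Next, I would construct a continuous one-parameter family $(P(s, t), Q(s, t)) \in \GL_n(\RR)^2$ for $s \in [a, b]$, and form the tropical polynomials
\[
T_s(y) = \min_{m \in S_f}\bigl\{\phi_m(s) + m \cdot y\bigr\}, \qquad \phi_m(s) = \val\bigl(Q_m(P(s, t), Q(s, t))\bigr),
\]
with $\phi_m$ continuous in $s$. The family is designed so that $T_a(-x)$ is uniquely attained by some monomial $m_a \in S_f$ and $T_b(-x)$ by a distinct $m_b$. Lemma \ref{LEM:Tropical shift} then places $-x$ in $\overline{\bigcup_{s \in [a, b] \cap \Q} \V(T_s)}$, which sits inside $\strop(\V(f))$ by Proposition \ref{PRO:Union trop}.

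The construction of the family is where both hypotheses enter. At $s = a$ I would take a generic constant $(U, V) \in \GL_n(\C)^2 \subset \GL_n(\RR)^2$, so that all $\phi_m(a) = 0$ and $m_a$ is the vertex of $\sNewt(f)$ realizing $\min_m m \cdot (-x)$, that is, maximizing $m \cdot x$. At $s = b$ I would take a Puiseux perturbation of a generic unitary, such as $(U' \exp(s t^\epsilon H), V')$, which stays in $\GL_n(\RR)^2$; since $Q_m$ vanishes on $\U(n) \times \U(n)$ for $m \notin \Z \mathbf{1}$ by the Remark following Proposition \ref{PRO:Image nu, vertices}, the valuations $\phi_m(b)$ become large for $m \notin \Z\mathbf{1}$, forcing $m_b \in S_f \cap \Z\mathbf{1}$. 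The hypothesis $S_f \not\subset \Z \mathbf{1}$ combined with $x \notin C^- \cup C^+$ — the latter interpreted as saying that neither $v^- = N^- \mathbf{1}$ nor $v^+ = N^+ \mathbf{1}$ is the dominant vertex at $x$ in the normal fan of $\sNewt(f)$ — ensures that $m_a$ may be chosen outside $\Z\mathbf{1}$, so $m_a \neq m_b$. Between the two endpoints, the Puiseux interpolation provides the continuous dependence of $\phi_m$ required by Lemma \ref{LEM:Tropical shift}.

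The hard part will be rigorously establishing the continuity of $\phi_m(s)$ through the path, particularly at the Puiseux-unitary endpoint where several valuations transition from finite to arbitrarily large values. I would handle this by a careful Taylor expansion of $Q_m(P(s, t), Q(s, t))$ around the unitary configuration, using Proposition \ref{PRO:Properties of the Qm} to compute the leading orders in $s t^\epsilon$, and by applying the tropical shift lemma on a sub-interval $[a, b']$ where all $\phi_m(s)$ remain finite but the switch of dominant monomial between $m_a$ and $m_b$ still occurs. A further subtlety that must be addressed is the genuinely degenerate situation in which the argmax of $m \cdot x$ over $\sNewt(f)$ coincides with a $\Z\mathbf{1}$-point; a finer choice of $H$ and a suitable convex-geometric argument using the permutation invariance of $\sNewt(f)$ (Corollary following Proposition \ref{PRO:Properties of the Qm}) is used to rule out this case.
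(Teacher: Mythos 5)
Your high-level strategy --- reduce to rational points, invoke Proposition \ref{PRO:Union trop}, and then manufacture a one-parameter family of classical tropical polynomials to feed into Lemma \ref{LEM:Tropical shift} --- is indeed the route the paper takes. But the construction of the family, which is the entire technical content of the argument, is where your sketch has two genuine gaps, and both are precisely the places where the paper does something you did not anticipate.

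First, you claim that at the unitary endpoint $s = b$, the valuations $\phi_m(b)$ ``become large for $m \notin \Z\mathbf{1}$'' because ``$Q_m$ vanishes on $\U(n) \times \U(n)$ for $m \notin \Z\mathbf{1}$.'' That reading of the paper's remark is incorrect: Lemma \ref{LEM:Homogeneity} only shows that $Q_m$ has a zero \emph{somewhere} on $\U(n) \times \U(n)$ when $m \notin \Z\mathbf{1}$, not that it vanishes identically there (indeed, if it did vanish identically one would get $S_f \subset \Z\mathbf{1}$ always, contradicting Example \ref{EX:Ex 2}). Consequently, you cannot force \emph{all} off-diagonal $\phi_m$ to blow up simultaneously by approaching a unitary pair; you can only arrange for the one $Q_{m_0}$ that you target (where $m_0$ is the strict minimizer of $m \cdot x$) to vanish, and for one complementary $m_1$ with $Q_{m_1}(U,V) \neq 0$. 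Your assertion that $m_b$ must land in $S_f \cap \Z\mathbf{1}$ has no basis, and your ``degenerate situation'' patch at the end is unnecessary baggage --- the paper disposes of it cleanly by noting that since $-x \notin C^- \cup C^+$ and $m_0$ is a strict extremizer of $m \cdot (-x)$, $m_0$ cannot be $N^{\pm}\mathbf{1}$, nor can it be any other $\Z\mathbf{1}$-point because those are interior to $\sNewt(f)$ when $S_f \not\subset \Z\mathbf{1}$.

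Second, and more fundamentally, you acknowledge that the continuity of $s \mapsto \phi_m(s)$ is ``the hard part'' and propose to handle it via Taylor expansion of $Q_m$ along a family $(U'\exp(st^\epsilon H), V')$. This does not work: the valuation of $Q_m(P(s,t),Q(s,t))$ is $\epsilon \cdot \min\{k : c_k(s) \neq 0\}$ where $c_k(s)$ are Taylor coefficients, and this jumps discontinuously whenever a leading $c_k(s)$ crosses zero. The paper sidesteps this entirely with a much cleaner device you missed: fix a \emph{single} pair $(P(t),Q(t)) = (U + t(A - U),\ V + t(B - V))$ interpolating linearly from a unitary zero of $Q_{m_0}$ to a generic pair, verify once and for all that $0 < \val(Q_{m_0}(P,Q)) < \infty$ and $\val(Q_{m_1}(P,Q)) = 0$, and then parameterize the family not by perturbing $P,Q$ but by the substitution $t \mapsto t^s$. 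This makes $\val(Q_m(P(t^s),Q(t^s))) = s\cdot\val(Q_m(P(t),Q(t)))$ exactly linear in $s$, so continuity is automatic and the tropical shift lemma applies with no further analysis. Without this reparameterization idea, your proof outline does not go through.
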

\begin{proof}
Let $x \in -\R^n\backslash(C^- \cup C^+)$. Let $m_0$ in $S_f$ that minimises its scalar product with $x$ in the sense that for all $m \in S_f$, $m_0 \cdot x \leq m \cdot x$. If there exists an $m \in S_f\backslash\{m_0\}$ such that $m_0 \cdot x = m \cdot x$, we just have to consider unitary matrices $U,V$ such that $Q_{m_0}(U,V) \neq 0$ and $Q_m(U,V) \neq 0$ and we verify easily that $x \in \V(\trop(\Psi_{U,V}(f)))$, thus we will assume that for all $m \in S_f\backslash\{m_0\}$, $m_0 \cdot x < m \cdot x$. As $-x$ is not in $C^- \cup C^+$ and $m_0$ maximises strictly its scalar product with $-x$ in $S_f$, we deduce that $m_0$ is not in $\{N^-\mathbf{1},N^+\mathbf{1}\}$. And it is clear that $m_0$ is not an interior point of the Newton polytope so $m_0 \notin \Z\mathbf{1}$ ($S_f \not\subset \Z\mathbf{1}$ so any point of $\sNewt(f) \cap \Z\mathbf{1}\backslash\{N^-\mathbf{1},N^+\mathbf{1}\}$ is an interior point of $\sNewt(f)$). It implies that $Q_{m_0}$ vanishes on $\U(n) \times \U(n)$ by lemma \ref{LEM:Homogeneity}. Consider $U,V$ unitary matrices such that $Q_{m_0}(U,V) = 0$.

There are now two possibilities. If for all $m \in S_f$, $Q_m(U,V) = 0$, then $\Psi_{U,V}(f) = 0$ so we are in the trivial case where $\sA(f) = -\strop(\V(f)) = \R^n$, which proves the inclusion. Else, there exists an $m_1 \in S_f$ such that $Q_{m_1}(U,V) \neq 0$. We will use the characterisation of classical tropical variety that uses the tropical polynomial. Given matrices $P(t),Q(t)$ that are invertible in $\RR$, we have
\begin{align*}
    \Psi_{P(t),Q(t)}(f)(z(t)) & = \sum_{m \in S_f} Q_m(P(t),Q(t))z^m(t)\\
    \Rightarrow \trop(\Psi_{P(t),Q(t)}(f))(y) & = \min_{m \in S_f}\{\val(Q_m(P(t),Q(t))) + m \cdot y\}.
\end{align*}
We want to find some matrices $(P(t),Q(t)) \in \GL_n(\RR)^2$ that verify the two following conditions, so we can use the lemma \ref{LEM:Tropical shift},
\begin{enumerate}
    \item $0 < \val(Q_{m_0}(P(t),Q(t))) < +\infty$,
    \item $\val(Q_{m_1}(P(t),Q(t))) = 0$.
\end{enumerate}
Assume that those two conditions are verified and let us show that $x \in \strop(\V(f))$. $P(t)$ and $Q(t)$ are invertible in $\RR$ so all the $Q_m(P(t),Q(t))$ have a non negative valuation. Let for all $s > 0$ and for all $m \in S_f$, the tropical monomials $M_m(s,y) = s\val(Q_m(P(t),Q(t))) + m \cdot y$ which equal $\val(Q_m(P(t^s),Q(t^s))) + m \cdot y$ when $s$ is rational. We have
\[
T_s(x) = \min_{m \in S_f}\{M_m(s,x)\} \tend{s}{0^+} \min_{\underset{Q_m(P(t),Q(t)) \neq 0}{m \in S_f}}\{m \cdot x\} = m_0 \cdot x < m \cdot x \textrm{ if } m \in S_f\backslash\{m_0\},
\]
so for some $a > 0$ small enough, $T_a(x) = M_{m_0}(a,x)$, and
\begin{align*}
    M_{m_0}(s,x) - M_{m_1}(s,x) & = s\val(Q_{m_0}(P(t),Q(t))) + m_0 \cdot x - s\val(Q_{m_1}(P(t),Q(t))) - m_1 \cdot x\\
    & = s\val(Q_{m_0}(P(t),Q(t))) + (m_0 - m_1) \cdot x\\
    & \tend{s}{+\infty} +\infty,
\end{align*}
so for $b > 0$ large enough, $M_{m_1}(b,x) < M_{m_0}(b,x)$ thus $T_b(x) \neq M_{m_1}(b,x)$. The hypothesis of the lemma \ref{LEM:Tropical shift} are verified thus according to this lemma and the proposition \ref{PRO:Union trop},
\[
x \in \overline{\bigcup_{s \in [a,b] \cap \Q} \V(\trop(\Psi_{P(t^s),Q^{-1}(t^s)}(f)))} \subset \strop(\V(f)).
\]
Now, all we have to do is to find matrices $P(t)$ and $Q(t)$ that verifies such conditions. Recall that there are matrices $U,V$ such that $Q_{m_1}(U,V) \neq Q_{m_0}(U,V) = 0$. As $U$ and $V$ are invertible, any matrix of the form $U + tA$ or $V + tA$ with $A \in \Mat_n(\C)$ are in $\GL_n(\RR)$. $Q_{m_0} \neq 0$ so there exists invertible matrices $A$ and $B$ such that $Q_{m_0}(A,B) \neq 0$. We choose $P(t) = U + t(A - U)$ and $Q(t) = V + t(B - V)$.

Condition 1 : Recall first of all that for all $m$, $\val(Q_m(P(t),Q(t))) \geq 0$. $Q_{m_0}(P(0),Q(0)) = Q_{m_0}(U,V) = 0$ by definition of $U$ and $V$ and $Q_{m_0}(P(1),Q(1)) = Q_{m_0}(A,B) \neq 0$ so $\val(Q_{m_0}(P(t),Q(t)))$ is neither 0, neither $+\infty$.

Condition 2 : it is a direct consequence of the fact that $Q_{m_1}(P(0),Q(0)) = Q_{m_1}(U,V) \neq 0$.

We found $P(t),Q(t)$ that satisfy the two wanted conditions, thus the proposition is proven.
\end{proof}
\begin{theorem}\label{THE:Bergman hypersurface}
    When $f \in \C[\GL_n]$, if $Y = \V(f)$,
    \[
    \lim_{\rho \rightarrow 0^+} \rho\sA(Y) = -\strop(Y) = \R^n\backslash(C^- \cup C^+).
    \]
\end{theorem}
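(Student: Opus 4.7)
The plan is to observe that the theorem is essentially a formal consequence of the three preceding results, by chaining their inclusions into a single circle. Recall that for any family of sets one has $\liminf \subset \limsup$ in the sense of Kuratowski, and equality of these two gives the Kuratowski limit. So my strategy is to exhibit the chain
\[
-\strop(Y) \;\subset\; \liminf_{\rho \to 0^+} \rho\,\sA(Y) \;\subset\; \limsup_{\rho \to 0^+} \rho\,\sA(Y) \;\subset\; \R^n \setminus (C^- \cup C^+) \;\subset\; -\strop(Y),
\]
which forces all inclusions to be equalities and in particular shows the Kuratowski limit exists with the claimed value.

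The first inclusion is exactly Corollary~\ref{COR:Inclusion1}. The middle inclusion is tautological. The third inclusion is Proposition~\ref{PRO:Inclusion2}. The only non-trivial step is thus the last inclusion, $\R^n \setminus (C^- \cup C^+) \subset -\strop(Y)$, and here I would split into the two cases covered by the previous propositions according to whether $S_f$ is contained in the line $\Z\mathbf{1}$ or not.

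In the case $S_f \subset \Z\mathbf{1}$, Proposition~\ref{PRO:Inclusion3, case 1} directly computes both $-\strop(\V(f))$ and $\R^n \setminus (C^- \cup C^+)$ and shows they coincide (either both empty, when $f$ is invertible, or both equal to $\{\mathbf{1}\}^\perp$). In the case $S_f \not\subset \Z\mathbf{1}$, Proposition~\ref{PRO:Inclusion3, case 2} directly gives $\R^n \setminus (C^- \cup C^+) \subset -\strop(\V(f))$. In neither case is there any extra argument required beyond quoting these statements.

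The only subtlety I would pay attention to while writing this up is the closure step: since $-\strop(Y)$ is by definition the closure of $-\strop_\Q(Y)$, and the Kuratowski liminf and limsup are automatically closed in $\R^n$, taking closures at the end is harmless and the chain still closes up. Once this is checked, equality throughout the chain immediately yields $\lim_{\rho \to 0^+} \rho\,\sA(Y) = -\strop(Y) = \R^n \setminus (C^- \cup C^+)$, which is the desired theorem. There is no real obstacle here; the theorem is just the bookkeeping that ties together Corollary~\ref{COR:Inclusion1} and Propositions~\ref{PRO:Inclusion2}, \ref{PRO:Inclusion3, case 1}, \ref{PRO:Inclusion3, case 2}.
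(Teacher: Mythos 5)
Your proof is correct and takes essentially the same approach as the paper, which likewise derives the theorem directly from Corollary~\ref{COR:Inclusion1} and Propositions~\ref{PRO:Inclusion2}, \ref{PRO:Inclusion3, case 1}, \ref{PRO:Inclusion3, case 2} by chaining the inclusions. You have also correctly read Proposition~\ref{PRO:Inclusion3, case 1} as covering the case $S_f \subset \Z\mathbf{1}$ (its statement has a sign typo, ``$\not\subset$'' where ``$\subset$'' is meant, as the proof makes clear), so the two cases genuinely cover all $f$.
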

\begin{proof}
This is a consequence of corollary \ref{COR:Inclusion1} and propositions \ref{PRO:Inclusion2}, \ref{PRO:Inclusion3, case 1} and \ref{PRO:Inclusion3, case 2}.
\end{proof}
Notice that the $C^-$ and $C^+$ only depend on the Newton polytope of $f$. Therefore, its tropical variety too.
\begin{example}
    Let $n = 2$ and $f$ any polynomial such that $\sNewt(f) = \Conv\{(-1,-1),(1,-2),(-2,1),(4,0),(0,4),(4,4)\}$ like on figure \ref{FIG:Example trop}. For example, $\d f(A) = \frac{1}{\det(A)} + \i\frac{\tr(A)^3}{\det(A)^2} - a_{11}a_{12}a_{21}a_{22} + 2\det(A)^4$.
\end{example}
Here, $N^-(1,1) = (-1,-1)$ and $N^+(1,1) = (4,4)$ are vertices of $\Newt(f)$. $C^-$ (resp. $C^+$) are the interiors of the normal cone of $N^-\mathbf{1}$ (resp. $N^+\mathbf{1}$), cf figure \ref{FIG:Example trop}.
\begin{figure}[H]
    \centering
    \includegraphics[width = 0.3\textwidth]{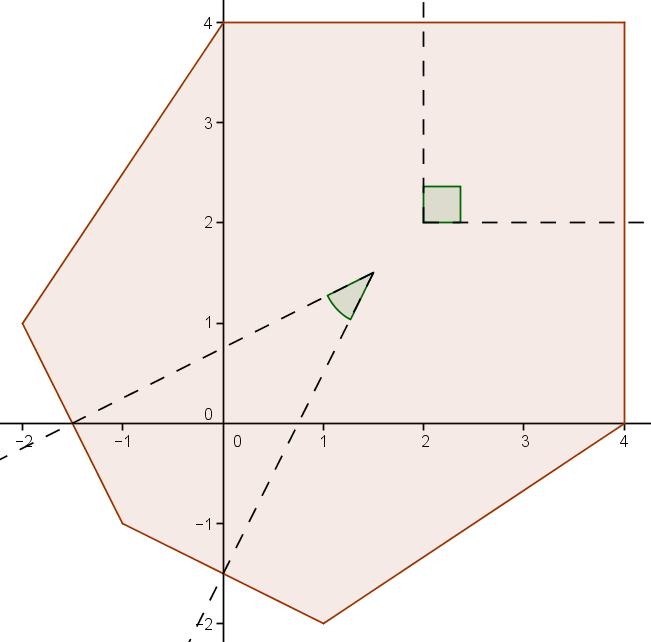}
    \includegraphics[width = 0.3\textwidth]{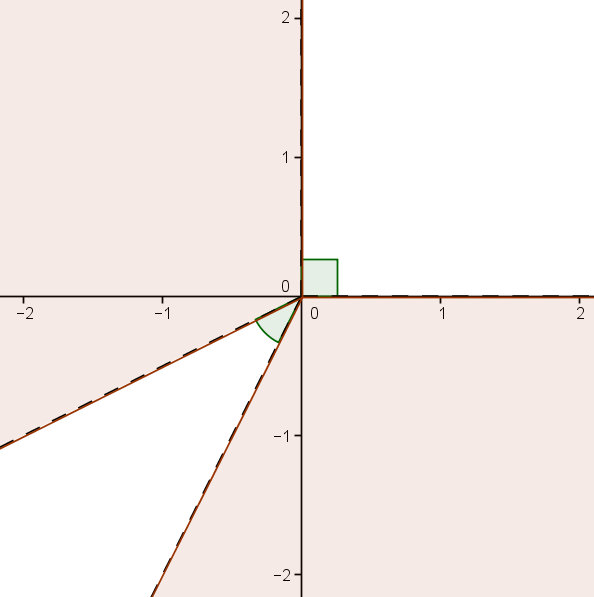} 
    \caption{The tropical variety (right) of a polynomial with a given Newton polytope (left).}
    \label{FIG:Example trop}
\end{figure}


\appendix

\section{Notations}\label{SEC:Notations}

$\Mat_n(R)$ is the set of $n \times n$ matrices with coefficients in the ring $R$.\\
$\GL_n(R)$ is the set of invertible ones.\\
$\C[X^\pm]$ is the set of Laurent polynomials with $n$ indeterminates (when $n$ is not ambiguous).\\
$[\![a,b]\!]$ is the set of relative integers between $a$ and $b$.\\
$\mathcal{S}_n$ is the group of permutations of $\{1, \ldots, n\}$.\\
$K[Y]$ is the ring of regular functions on the algebraic variety $Y$ over $K$.\\
$\chi_A$ is the characteristic polynomial of the matrix $A$.\\
$\Gamma(E)$ is the set of connected components of a topological space $E$.\\
$\Conv(S)$ is the convex hull of a subset $S$ of $\R^n$.\\
When $\d f = \sum_{m \in \Z^n} f_mX^m \in \C[X^\pm]$, $S_f = \{m \in \Z^n|f_m \neq 0\}$ is the support of $f$.\\
$\Newt(f) = \Conv(S_f)$ is its Newton polytope.\\
When $\d f(A\diag(z)B^{-1}) = \sum_{m \in \Z^n} Q_m(A,B)z^m$, $S_f = \{m \in \Z^n|Q_m \neq 0\}$ is the support of $f$.\\
$\sNewt(f) = \Conv(S_f)$ is its Newton polytope.\\
$\mathbf{1} = (1,\ldots,1) \in \R^n$.\\
When $K \subset \{1, \ldots, n\}$, $\ind_K \in \R^n$ is the vector such that for all $i$, $(\ind_K)_i = \ind_K(i)$.\\
When $1 \leq k \leq n$, $\varpi_k = \ind_{[\![k + 1 - n,n]\!]} = (0,\ldots,0,1,\ldots,1)$ with $k$ ones.\\
$B = \{(U,L) \in \GL_n(\C)^2|U \textrm{ upper triangular and } L \textrm{ lower triangular}\}$ is the Borel subgroup of $\GL_n(\C)^2$.\\
$\Lambda = \{\lambda \in \R^n|\lambda_1 \leq \cdots \leq \lambda_n\}$ is the set of dominant $B$-weights of $\GL_n(\C)$.\\
When $\lambda \in \Lambda$, $C(\lambda) = \Conv(\mathcal{S}_n \cdot \lambda)$.\\
$\Delta(f)$ is the moment polytope of $f$ (cf Section \ref{SEC:Representations}).\\
$V_\lambda$ is the irreducible $\GL_n(\C)$-representation of highest weight $\lambda \in \Lambda$.\\
$W_\lambda \cong V_\lambda \otimes V_\lambda^*$ is the irreducible $\GL_n(\C)^2$-representation of highest weight $\lambda \in \Lambda$.\\
$\overline{\mathcal{K}}$ is the field of complex Puiseux series.\\
$\d \val : \sum_{q \in \Q} a_qt^q \mapsto \min\{q \in \Q|a_q \neq 0\}$ is its natural valuation. $\val$ can also designate any valuation depending on the context.\\
$\RR$ is the ring of complex Puiseux series with a non negative valuation.\\
$\sval : A(t) \mapsto$ the valuations of the invariant factors of $A(t)$, which is the spherical valuation of a Puiseux series matrix.\\
$\Gamma_\val$ is the image of the valuation $\val$, it is a subgroup of $\R$.\\
$\V = \{x \in \R^n|x_1 \geq \cdots \geq x_n\}$ is the Weyl chamber of $\GL_n(\C)$.\\
$\strop(Y) = \overline{\{\sval(A(t))|A(t) \in Y(\overline{\mathcal{K}})\}}$ is the spherical tropical variety of a spherical variety $Y$.\\
$N^-$ (resp. $N^+$) is the lowest (resp. largest) integer $N$ such that $N\mathbf{1} \in \sNewt(f)$ when it is well-defined.\\
$C_F(\Delta)$ is the normal cone of the face $F$ of the convex polytope $\Delta$.\\
$C^-$ (resp. $C^+$) is the interior of $C_{\{N^-\mathbf{1}\}}(\sNewt(f))$ (resp. $C_{\{N^+\mathbf{1}\}}(\sNewt(f))$), or the empty set when $N^-$ (resp. $N^+$) does not exist.

\section{Some technical lemmas}\label{SEC:Lemmas}

The purpose of this section is to show useful but technical lemmas concerning measure theory, differential manifolds and complex analysis. The first one we need is the following,
\begin{appendix lemma}\label{LEM:Zero negligible}
    Let $P : \Mat_n(\C) \rightarrow \C$ be a nonzero polynomial with $n^2$ variables. Then the set of unitary zeros of $P$, i.e. $Z = P^{-1}\{0\} \cap \U(n) \subset \U(n)$, has measure zero, with respect to the natural probability Haar measure $\mu$ on $\U(n)$ (equivalently, any non-measure zero subset of $\U(n)$ is dense in $\GL_n(\C)$ for the Zariski topology).
\end{appendix lemma}
First of all,
\begin{appendix proposition}
    If $F : \R^n \rightarrow \R$ is $\mathcal{C}^1$ and such that $F^{-1}\{0\} \subset \R^n$ is non-measure zero for the Lebesgue measure, $\diff F^{-1}\{0\}$ is non-measure zero too.
\end{appendix proposition}
\begin{proof}
We prove the claim by contradiction. Let $N = \diff F^{-1}\{0\}$ that we assume to be measure zero. For all $x \in N^C$, let $1 \leq k_x \leq n$ and $U_x$ an open neighborhood of $x$ such that $\partial_{k_x}F(y) \neq 0$ for $y \in U_x$. Without loss of generality, we can assume that the $U_x$ are cubes centered around $x$. As $\R^n$ is a countable union of compact sets, we can find a countable family $S \subset \R^n$ such that $\d \R^n = \bigcup_{x \in S} U_x$. Therefore, if we show that for all $x \in N^C$, $\Vol(F^{-1}\{0\} \cap U_x) = 0$, it proves that $\Vol(F^{-1}\{0\}) = 0$. Indeed, for all $x$,
\begin{align*}
    \Vol(F^{-1}\{0\} \cap U_x) & = \int_{U_x} \ind_{\{0\}}(F(y)) \, \diff y\\
    & = \int_{x_1 - a}^{x_1 + a} \cdots \int_{x_n - a}^{x_n + a} \ind_{\{0\}}(F(y_1,\ldots,y_n)) \, \diff y_n \cdots \diff y_1,
\end{align*}
where $\d U_x = \prod_{k = 1}^n (x_k - a,x_k + a)$. As $\partial_{k_x}F$ does not vanish on $U_x$, for all $y_1,\ldots,y_{k_x - 1},y_{k_x + 1},\ldots,y_n$, the function $y_{k_x} \mapsto F(y)$ is strictly monotone. We deduce that it vanishes at most once. Therefore, for all $y_1,\ldots,y_{k_x - 1},y_{k_x + 1},\ldots,y_n$,
\[
\int_{x_{k_x} - a}^{x_{k_x} + a} \ind_{\{0\}}(F(y)) \diff y_{k_x} = 0
\]
so $\Vol(F^{-1}\{0\} \cap U_x) = 0$, which proves the proposition.
\end{proof}
\begin{appendix proposition}\label{PRO:Polynomial and manifold}
    The previous proposition remains true on a smooth Riemannian manifold $M$.
\end{appendix proposition}
\begin{proof}
As $M$ can be written as a countable union of open subsets that are diffeomorphic to bounded parts of $\R^n$ where $n$ is the dimension of $M$, we just apply the previous proposition on each of those subsets.
\end{proof}
Now, we can prove the lemma by induction on the degree $d$ of the polynomial $P$.
\begin{proof}\ \\
\noindent\underline{$d = 0$ :} If $P$ is constant non zero, $Z = P^{-1}\{0\} \cap \U(n) = \O$ is measure zero.\\

\noindent\underline{$d > 0$ :} Assume that $Z$ is non-measure zero for the natural measure $\mu$ of $\U(n)$. By the proposition \ref{PRO:Polynomial and manifold}, the set $Z' = \diff P^{-1}\{0\} \cap \U(n)$ is measure zero (the Haar measure for compact Lie groups is induced by its Riemannian structure when the group multiplications are isometries). Recall that at each point $U \in \U(n)$, $T_U\U(n) = \i\mathcal{H}(n)U$. Therefore, for all $U \in Z'$ and for all $A \in \mathcal{H}(n)$, $\diff P(U)(\i AU) = 0$. But $P$ is a polynomial, so it is in particular holomorphic. We deduce that $\diff P(u)$ is $\C$-linear so for all $A \in \mathcal{H}(n)$, $\diff P(U)(AU) = 0$. As $\Mat_n(\C) = \mathcal{H}(n) \oplus \i\mathcal{H}(n)$, we have in fact that $\diff P(U) = 0$ on all $\Mat_n(\C)$ when $U \in Z'$. Therefore, for each $1 \leq i,j \leq n$, $\partial_{X_{ij}}P(U) = \diff P(U)(E_{ij}) = 0$ when $U \in Z'$. But the degree of the $\partial_{X_{ij}}P$ is bounded by $d - 1$. We deduce by induction that they are all zero. It implies that $P$ is constant, which is absurd. This proves the lemma.
\end{proof}
\begin{remark}
Lemma \ref{LEM:Zero negligible} remains true when $P : \Mat_n(\C)^2 \to \C$.
Indeed, the lemma is equivalent to the following formula,
\[
\frac{1}{V_n}\int_{\U(n)} \ind_{\{0\}}(Q(U)) \, \diff\mu(U) = \ind_{\{0\}}(Q)
\]
where $Q : \Mat_n(\C) \rightarrow \C$ is a polynomial. So if we set for all $(A,B) \in \Mat_n(\C)^2$, $Q_A(B) = R_B(A) = P(A,B)$, the $Q_A$ and the $R_B$ are polynomials and
\begin{align*}
    \frac{1}{V_n^2}\int_{\U(n) \times \U(n)} \ind_{\{0\}}(P(U,V)) \, \diff\mu^2(U,V) & = \frac{1}{V_n^2}\int_{\U(n)}\int_{\U(n)} \ind_{\{0\}}(Q_V(U)) \, \diff\mu(U)\diff\mu(V)\\
    & = \frac{1}{V_n}\int_{\U(n)} \ind_{\{0\}}(Q_V) \, \diff\mu(V)\\
    & \leq \frac{1}{V_n}\int_{\U(n)} \ind_{\{0\}}(Q_V(A)) \, \diff\mu(V) \textrm{ for all fixed matrix $A$.}\\
    & = \frac{1}{V_n}\int_{\U(n)} \ind_{\{0\}}(R_A(V)) \, \diff\mu(V)\\
    & = \ind_{\{0\}}(R_A).
\end{align*}
If $P^{-1}\{0\}$ is non-measure zero, for all matrices $A$, the $R_A$ are zero thus $P = 0$.
\end{remark}
\begin{appendix lemma}
    If $f : \C^* \rightarrow \C^*$ is a continuous function such that there are non zero complex numbers $\alpha$ and $\beta$ and integers $(m_1,m_2) \in \Z^2$ verifying for all $\theta \in \R$, $f(r\e^{\i\theta}) \eq{r}{0} \alpha r^{-m_2}\e^{\i\theta m_2}$ and $f(r\e^{\i\theta}) \eq{r}{+\infty} \beta r^{m_1}\e^{\i\theta m_1}$, then $m_1 = m_2$.
\end{appendix lemma}
\begin{proof}
Under those hypothesis, for some $r_1,r_2 > 0$, we have
\begin{align*}
    r \leq r_1 & \Rightarrow \abs{f(r\e^{\i\theta}) - \alpha r^{-m_1}\e^{\i\theta m_1}} \leq \frac{1}{2}\abs{\alpha}r^{-m_1}\\
    r \geq r_2 & \Rightarrow \abs{f(r\e^{\i\theta}) - \beta r^{-m_2}\e^{\i\theta m_2}} \leq \frac{1}{2}\abs{\beta}r^{-m_2}\\
\end{align*}
As $\S^1$ is compact, we can choose $r_1$ and $r_2$ uniformly regarding to $\theta$. Up to increasing $r_2$ or decreasing $r_1$, we can assume that $\abs{\alpha}r_1^{-m_1} = \abs{\beta}r_2^{-m_2}$. We denote by $\rho$ this quantity. Consider now the following paths from $[0,1]$ to $\C^*$ for some $\theta \in \R$,
\begin{align*}
    \gamma_\theta^1 : t \mapsto \rho\exp(\i((1 - t)(\theta m_1 + \arg(\alpha)) + t(\theta m_2 + \arg(\beta)))) & \Rightarrow \gamma_\theta^1(0) = \alpha r_1^{-m_1}\e^{\i\theta m_1}, \gamma_\theta^1(1) = \beta r_2^{m_2}\e^{\i\theta m_2}\\\\
    \gamma_\theta^2 : t \mapsto (1 - t)\beta r_2^{-m_2}\e^{\i\theta m_2} + tf(r_2\e^{\i\theta}) & \Rightarrow \gamma_\theta^2(0) = \beta r_2^{m_2}\e^{\i\theta m_2}, \gamma_\theta^2(1) = f(r_2\e^{\i\theta})\\\\
    \gamma_\theta^3 : t \mapsto f(((1 - t)r_2 + tr_1)\e^{\i\theta}) & \Rightarrow \gamma_\theta^3(0) = f(r_2\e^{\i\theta}), \gamma_\theta^3(1) = f(r_1\e^{\i\theta})\\\\
    \gamma_\theta^4 : t \mapsto (1 - t)f(r_1\e^{\i\theta}) + t\alpha r_1^{m_1}\e^{\i\theta m_1} & \Rightarrow \gamma_\theta^4(0) = f(r_1\e^{\i\theta}), \gamma_\theta^4(1) = \alpha r_1^{m_1}\e^{\i\theta m_1}.
\end{align*}
\begin{figure}[H]
    \centering
    \includegraphics[width = 0.5\textwidth]{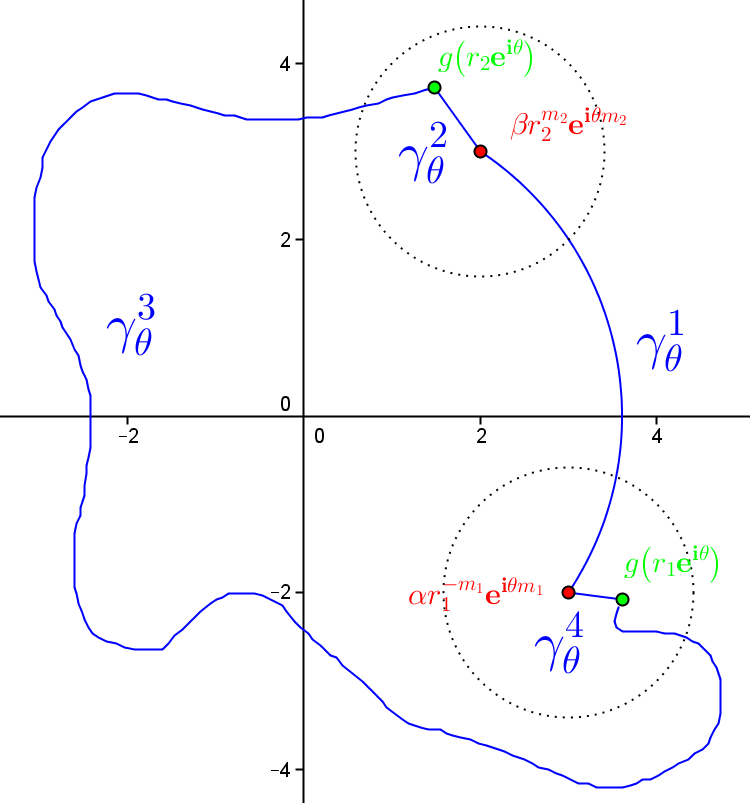}
    \caption{Illustration of the paths $\gamma_\theta^k$.}
\end{figure}

Let $\gamma_\theta = (\gamma_\theta^1 * \gamma_\theta^2)*(\gamma_\theta^3 * \gamma_\theta^4)$, which is a path from $[0,1]$ to $\C^*$ because $f$ does not vanish. Notice that $\theta \mapsto \gamma_\theta$ is a homotopy so all the $\gamma_\theta$ are homotopic the ones with the others. For any $k \in \{2,3,4\}$, $\theta \mapsto \gamma_\theta^k$ is $2\pi$-periodic so
\[
\gamma_0 \sim \gamma_{2\pi} \Rightarrow \gamma_0^1 \sim \gamma_{2\pi}^1 \Rightarrow \mathrm{Ind}_0\left(\gamma_{2\pi}^1 * \overline{\gamma_0^1}\right) = 0
\]
And after noticing that for all $t$, $\gamma_{2\pi}^1(t) = \e^{2\i\pi(m_2 - m_1)t}\gamma_0^1(t)$ we can compute that
\begin{align*}
    \mathrm{Ind}_0\left(\gamma_{2\pi}^1 * \overline{\gamma_0^1}\right) & = \int_0^1 \frac{\gamma_{2\pi}^1{}'(t)}{\gamma_{2\pi}^1(t)} \frac{\diff t}{2\i\pi} - \int_0^1 \frac{\gamma_0^1{}'(t)}{\gamma_0^1(t)} \frac{\diff t}{2\i\pi}\\
    & = \int_0^1 \frac{2\i\pi(m_2 - m_1)\e^{2\i\pi(m_2 - m_1)t}\gamma_0^1(t) + \e^{2\i\pi(m_2 - m_1)t}\gamma_0^1{}'(t)}{\e^{2\i\pi(m_2 - m_1)t}\gamma_0^1(t)} \frac{\diff t}{2\i\pi} - \int_0^1 \frac{\gamma_0^1{}'(t)}{\gamma_0^1(t)} \frac{\diff t}{2\i\pi}\\
    & = m_2 - m_1.
\end{align*}
As this quantity is null, we deduce that $m_1 = m_2$.
\end{proof}

\begin{appendix lemma}\label{LEM:Homogeneity}
    When $h : \Mat_n(\C) \rightarrow \C$ is a homogeneous polynomial with homogeneity coefficient $m \in \N^n$ regarding to the columns in the sens that for all $A$, $\lambda$, $h(A\diag(\lambda)) = \lambda^mh(A)$ and $h$ does not vanish on $\U(n)$, then $m_1 = \cdots = m_n$.
\end{appendix lemma}
\begin{proof}
\underline{First case, $n = 1$ :} Trivial.\\

\underline{Second case, $n = 2$ :} Let $f : \fonction{\C^2}{\C}{(x,y)}{h\begin{pmatrix} x & y \\ 1 & 1 \end{pmatrix}}$. By homogeneity and because $h$ does not vanish on $\U(2)$, $h$ does not vanish on any invertible matrix $A$ if its columns are orthonormal so for any $z \in \C^*$,
\[
f\left(z,-\frac{z}{\abs{z}^2}\right) = h\begin{pmatrix} z & -\frac{z}{\abs{z}^2} \\ 1 & 1 \end{pmatrix} \neq 0.
\]
Moreover, $h$ is a polynomial so $f$ is and
\[
f(x,y) = h\begin{pmatrix} x & y \\ 1 & 1 \end{pmatrix} = x^{m_1}y^{m_2}h\begin{pmatrix} 1 & 1 \\ \frac{1}{x} & \frac{1}{y} \end{pmatrix} \landau{\abs{x},\abs{y}}{+\infty} \mathrm{O}(x^{m_1}y^{m_2}).
\]
We deduce that $f$ can be written as
\[
f(x,y) = \sum_{p_1 = 0}^{m_1}\sum_{p_2 = 0}^{m_2} f_{p_1,p_2}x^{p_1}y^{p_2}.
\]
Now, notice that
\begin{align*}
    f\left(z,-\frac{z}{\abs{z}^2}\right) & = h\begin{pmatrix} z & -\frac{z}{\abs{z}^2} \\ 1 & 1 \end{pmatrix}\\
    & = \left(\frac{z}{\abs{z}^2}\right)^{m_2}h\begin{pmatrix} z & -1 \\ 1 & \overline{z} \end{pmatrix}\\
    & \landau{z}{0} \frac{1}{\overline{z}^{m_2}}h\begin{pmatrix} 0 & -1 \\ 1 & 0 \end{pmatrix} + \circ\left(\frac{1}{\abs{z}^{m_2}}\right).
\end{align*}
and
\[
f\left(z,-\frac{z}{\abs{z}^2}\right) = \sum_{p_1 = 0}^{m_1}\sum_{p_2 = 0}^{m_2} f_{p_1,p_2}z^{p_1}\frac{1}{\overline{z}^{p_2}} \landau{z}{0} f_{0,m_2}\frac{(-1)^{m_2}}{\overline{z}^{m_2}} + \circ\left(\frac{1}{\abs{z}^{m_2}}\right).
\]
so $\d f_{0,m_2} = (-1)^{m_2}h\begin{pmatrix} 0 & -1 \\ 1 & 0 \end{pmatrix} \neq 0$ because this matrix is unitary. With the same kind or argument, we could show that $f_{m_1,0} \neq 0$. Let $z = r\e^{\i\theta} \in \C^*$.
\begin{align*}
    f\left(z,-\frac{z}{\abs{z}^2}\right) & = f\left(r\e^{\i\theta},-\frac{\e^{\i\theta}}{r}\right)\\
    & = \sum_{p_1 = 0}^{m_1}\sum_{p_2 = 0}^{m_2} (-1)^{p_2}f_{p_1,p_2}r^{p_1 - p_2}\e^{\i\theta(p_1 + p_2)}\\
    & \eq{r}{0} (-1)^{m_2}f_{0,p_2}r^{-m_2}\e^{\i\theta m_2}\\
    \mathrm{and} & \eq{r}{+\infty} f_{p_1,0}r^{m_1}\e^{\i\theta m_1} \textrm{ using the same argument.}
\end{align*}
Using the previous lemma, we obtain that $m_1 = m_2$.\\

\underline{Third case : $n \geq 3$} Let $1 \leq i \leq n - 1$ $\tilde{h} : \fonction{\Mat_2(\C)}{\C}{A}{h\begin{pmatrix} I_{i - 1} & 0 & 0 \\ 0 & A & 0 \\ 0 & 0 & I_{n - i - 1} \end{pmatrix}}$. $\tilde{h}$ verify the hypothesis of the lemma with $m_i,m_{i + 1}$. Using the case $n = 2$, we deduce that $m_i = m_{i + 1}$. It is true for all $i$ so $m_1 = \cdots = m_n$.
\end{proof}


\bibliographystyle{alpha}
\bibliography{bibliography.bib}

\end{document}